\long\def\metanote#1#2{{\color{#1}\
\ifmmode\hbox\fi{\sffamily\mdseries\upshape [#2]}\ }}
\theoremstyle{plain}
\newtheorem{theo}{Theorem}[section]
\newtheorem{lem}[theo]{Lemma}
\newtheorem{prop}[theo]{Proposition}
\theoremstyle{remark}
\newtheorem{rmk}[theo]{Remark}
\newtheorem{defin}[theo]{Definition}
\newtheorem{cond}[theo]{Condition}
\newcommand{\ra}{\rightarrow}
\newcommand{\be}{\begin{equation}}
\newcommand{\ee}{\end{equation}}
\newcommand{\bi}{\begin{itemize}}
\newcommand{\ei}{\end{itemize}}
\newcommand{\commentout}[1]{}
\newcommand{\Cts}{\text{Cts}}
\newcommand{\Lip}{\text{Lip}}
\newcommand{\TV}{\text{TV}}
\newcommand{\Wah}{\text{W}}
\newcommand{\Law}{{\mathcal{L}}}
\newcommand{\calM}{{\mathcal{M}}}
\newcommand{\K}{{\mathbb{K}}}
\newcommand{\E}{{\mathcal{E}}}
\newcommand{\F}{{\mathcal{F}}}
\newcommand{\calA}{{\mathcal{A}}}
\newcommand{\calB}{{\mathcal{B}}}
\newcommand{\calD}{{\mathcal{D}}}
\newcommand{\calE}{{\mathcal{E}}}
\newcommand{\calF}{{\mathcal{F}}}
\newcommand{\calG}{{\mathcal{G}}}
\newcommand{\calO}{{\mathcal{O}}}
\newcommand{\calP}{{\mathcal{P}}}
\newcommand{\bfS}{{\bf{S}}}
\newcommand{\calK}{{\mathcal{K}}}
\newcommand{\calY}{{\mathcal{Y}}}
\newcommand{\Nm}{{\mathbb{N}}}
\newcommand{\MG}{{\text{MG}}}
\newcommand{\FV}{{\text{FV}}}
\newcommand{\Rm}{{\mathbb R}}
\newcommand{\Pm}{{\mathbb P}}
\newcommand{\expE}{{\mathbb E}}
\newcommand{\Ind}{{\mathbbm{1}}}
\newcommand{\Wat}{\text{W}_a}
\long\def\metanote#1#2{{\color{#1}\
\ifmmode\hbox\fi{\sffamily\mdseries\upshape [#2]}\ }}
\begin{document}
\setcounter{page}{1}

\title{Scaling Limit of the Fleming-Viot Multi-Colour Process}
\author{Oliver Tough\footnote{Department of Mathematics, University of Bath, okt24@bath.ac.uk}}
\date{July 31, 2023}

\maketitle

\begin{abstract}
We consider the $N$-particle Fleming-Viot process associated to a normally reflected diffusion with soft catalyst killing. The Fleming-Viot multi-colour process is obtained by attaching genetic information to the particles in the Fleming-Viot process. We establish that, after rescaling time by $t\mapsto Nt$, this genetic information converges to the (very different) Fleming-Viot process from population genetics, as $N\rightarrow\infty$. An extension is provided to dynamics given by Brownian motion with hard catalyst killing at the boundary of its domain.
\end{abstract}

\section{Introduction and main result}

In this paper we study the behaviour of a system of interacting diffusion processes, known as a Fleming-Viot particle system, first introduced by Burdzy, Hołyst and March in \cite{Burdzy2000}. We will establish that if one attaches genetic information to the Fleming-Viot particle system and rescales time by $t\mapsto Nt$, this genetic information evolves for large $N$ like the (very different) Fleming-Viot process from population genetics, which we refer to in this article as a \textit{Wright-Fisher process} for the avoidance of confusion. This is our main theorem, Theorem \ref{theo:Convergence to FV diffusion}. We emphasise that, despite sharing the same name, no link had previously been established between the Fleming-Viot particle system (or any similar particle system) and the Wright-Fisher process.

Throughout this paper, $(X_t)_{0\leq t<\tau_{\partial}}$ will be defined to be a diffusion process evolving in the closure $\bar D$ of an open, connected, bounded domain $D\subseteq \Rm^d$, normally reflected at the $C^{\infty}$ boundary $\partial D$, and killed at position dependent rate $\kappa(X_t)$ (\textit{soft killing}). That is, prior to the killing time $\tau_{\partial}$, $X_t$ evolves according to the SDE
\begin{equation}\label{eq:reflected diffusion SDE}
\begin{split}
dX_t=b(X_t)dt+\sigma(X_s)dW_s+\vec{n}(X_t)d\xi_t\in \bar D,\quad 0\leq t<\tau_{\partial},\\
\text{with}\quad \Ind(\tau_{\partial}>t)+\int_0^t\kappa(X_s)\Ind(\tau_{\partial}>s)ds\quad\text{being a martingale,}
\end{split}
\end{equation}
whereby $\xi_t$ is the boundary local time of $X_t$ at $\partial D$ and $\hat{n}(x)$ is the unit interior normal at $x\in \partial D$. A precise definition of such processes is given in Appendix \ref{appendix:reflected diffusions}. We assume throughout that $\kappa\in C^{\infty}(\Rm^d;\Rm_{\geq 0})$ and is strictly positive somewhere on $\bar D$. We also assume that $b\in C^{\infty}(\Rm^d;\Rm^d)$ and $\sigma\in C^{\infty}(\Rm^d;\Rm^{d\times m})$, with $\sigma \sigma^T$ uniformly positive definite. 

The Fleming-Viot particle system is defined as follows.
\begin{defin}[Fleming-Viot particle system]
The Fleming-Viot particle system $(\vec{X}^N_t)_{t\geq 0}$ consists of $N \geq 2$ particles
\[
\vec{X}^N_t=(X^{N,1}_t,\ldots,X^{N,N}_t),\quad t\geq 0,
\]
evolving independently in the domain $\bar D$ according to \eqref{eq:reflected diffusion SDE}. When a particle is killed we relocate it to the position of a different particle chosen independently and uniformly at random. 
\label{defin:FV particle system reflected diffusions}
\end{defin}
In general, it is not clear that the Fleming-Viot particle system is well-posed due to the possibility of infinitely-many jumps in finite time. In the present setting, however, this is not an issue as the killing rate is bounded.

The Fleming-Viot particle system was introduced by Burdzy, Hołyst and March \cite{Burdzy2000} in the case of Brownian dynamics with instantaneous killing at the boundary (\textit{hard killing}), where it was shown to provide an approximation method both for the heat equation with Dirichlet boundary conditions and the principal eigenfunction of the Dirichlet Laplacian. The Fleming-Viot particle system with soft killing was considered by Grigorescu in \cite{Grigorescu2007}. The Fleming-Viot particle system has been shown to provide a general approximation method for absorbed strong Markov processes by Villemonais \cite{Villemonais2011}, and has been shown to provide an approximation method for quasi-stationary distributions (QSDs) in a variety of settings \cite{Burdzy2000,Asselah2011,Asselah2016,Tough2023}. When a killed Markov process is Feller, quasi-stationary distributions correspond to left eigenmeasures of its infinitesimal generator \cite[Proposition 4]{Meleard2011}.

\subsection{The Fleming-Viot multi-colour process}

We attach genetic information (``colours'') to the Fleming-Viot particle system, resulting in the Fleming-Viot multi-colour process, which was introduced by Grigorescu and Kang in \cite[Section 5.1]{Grigorescu2012}. Whereas the colours in the construction of \cite[Section 5.1]{Grigorescu2012} are assumed to belong to a finite space, the present article develops this by instead assuming the colours belong to a complete, separable metric space. This space is referred to as the ``colour space'' and is denoted by $\mathbb{K}$. The colour $\eta^i_t\in\K$ gives the genetic information of the particle $X^i_t$, for $i=1,\ldots,N$. A precise definition of the Fleming-Viot multi-colour process is given by the following.

\begin{defin}[Fleming-Viot multi-colour process]
We take $(\K,d)$ to be an arbitrary complete separable metric space, which we call the colour space. We define $(\vec{X}^N_t,\vec{\eta}^N_t)_{0\leq t<\infty}=\{(X^{N,i}_t,\eta^{N,i}_t)_{0\leq t<\infty}:i=1,\ldots,N\}$ as follows:

\begin{equation}  
\left \{ \begin{split}
(i) & \quad \text{Initial condition: $((X^{N,1}_0,\eta^{N,1}_0),\ldots,(X^{N,N}_0,\eta^{N,N}_0))\sim \upsilon^N\in \mathcal{P}((\bar D\times\K)^N)$.}\\
(ii) & \quad \text{For $t \in [0,\infty)$ and between killing times the particles $(X^{N,i}_t,\eta^{N,i}_t)$ evolve and are killed}\\
& \quad\text{independently according to $\eqref{eq:reflected diffusion SDE}$ in the first variable, and are constant in the second}\\
& \quad\text{variable.}\\
(iii) & \quad \text{We write $\tau^i_k$ for the death times of particle $(X^{N,i},\eta^{N,i})$ (with $\tau^i_0:=0$). When particle}\\
& \quad\text{$(X^{N,i},\eta^{N,i})$ is killed at time $\tau^i_k$ it jumps to the location of particle $(X^{N,j},\eta^{N,j})$, with}\\
& \quad\text{$j=U^i_k\in\{1,\ldots,N\}\setminus\{i\}$ chosen independently and uniformly at random, at which}\\
& \quad \text{ time we set $(X^{N,i}_{\tau^i_k},\eta^{N,i}_{\tau^i_k})=(X^{N,j}_{\tau^i_k-},\eta^{N,j}_{\tau^i_k-})$. Moreover we write $\tau_n$ for the $n^{\text{th}}$ time at}\\
& \quad \text{which any particle is killed (with $\tau_0:=0$).}
\end{split}  \right.
\label{eq:N-particle m label (X,eta) system}
\end{equation}
We then define
\begin{equation}\label{eq:jumps of multicolour}
J^N_t:=\frac{1}{N}\sup\{n>0:\tau_n\leq t\}
\end{equation}
to be the number of deaths up to time $t$ normalised by $\frac{1}{N}$, and define the empirical measures
\begin{equation}\label{eq:spatial and colour empirical measures}
m^N_t:=\frac{1}{N}\sum_{i=1}^N\delta_{X^{N,i}_t}\quad\text{and}\quad  \chi^{N}_t:=\frac{1}{N}\sum_{i=1}^N\delta_{\eta^{N,i}_t}.
\end{equation}
\label{defin:Multi-Colour Process}
\end{defin}
We will obtain a scaling limit for the  colours as $N\ra\infty$ and time is rescaled according to $t\mapsto Nt$. We now describe the scaling limit we will obtain.

\subsection{The Wright-Fisher process}\label{subsection:WF process intro}
Given a gene with two neutral alleles, $a$ and $A$, the SDE
\[
dp_t=\sqrt{p_t(1-p_t)}dW_t
\]
models the evolution of the proportion $p_t\in [0,1]$ of the population carrying the $a$-allele in a large population. This is the classical Wright-Fisher diffusion. Generalising this to $n$ alleles, the driftless $n$-Type Wright-Fisher diffusion process of rate $\theta>0$ takes values in the simplex $\Delta_n:=\{p=(p_1,\ldots,p_n)\in \Rm_{\geq 0}^n:\sum_j p_j=1\}$ and is characterised by the generator
\begin{equation}
    L_{\text{WF}}=\frac{1}{2}\theta\sum_{i,j=1}^np_i(\delta_{ij}-p_j)\frac{\partial^2}{\partial p_i\partial p_j},\quad \mathcal{D}(L)=C^2(\Rm^n).
\label{eq:generator of K dim WF-diff}
\end{equation}

This was generalised by Fleming and Viot \cite{Fleming1979} to a probability measure-valued process, which allows for the set of alleles to be infinite. This measure-valued process is typically called a Fleming-Viot process, but is referred to as the Wright-Fisher process in the present article to avoid confusion. In particular, letting $\K$ be the (complete, separable) colour space, we will consider the Wright-Fisher process on $\calP(\K)$. This corresponds to the set of possible alleles being $\K$, and will be our scaling limit. 

The reader is directed towards \cite{Ethier1993c} for a survey of the Wright-Fisher process due to Ethier and Kurtz. The Wright-Fisher process is defined as a solution of a martingale problem. This typically features additional terms representing mutation, selection and recombination, but we will not need this generality here. There are various possible formulations of this martingale problem, which can be found in \cite[Section 3]{Ethier1993c}. The formulation we shall employ is given by \cite[(3.20) and (3.21)]{Ethier1993c}. This definition of the Fleming-Viot process, as well as its well-posedness (which comes from \cite[Theorem 7.1]{Ethier1993c}), are given in Appendix \ref{appendix:Wright-Fisher process}. 

As in \eqref{eq:generator of K dim WF-diff}, we parametrise the Wright-Fisher process on $\calP(\K)$ with a rate $\theta>0$. The following proposition provides intuition for how one may think of the Wright-Fisher process and its relationship to the $n$-type Wright-Fisher diffusion. This proposition shall be used in the proof of our main theorem, and is proven in Appendix \ref{appendix:Wright-Fisher process}.
\begin{prop}\label{prop:basic facts WF superprocess}
We let $(\nu_t)_{t\geq 0}$ be a Wright-Fisher process on $\calP(\K)$ of rate $\theta>0$ as defined in Appendix \ref{appendix:Wright-Fisher process}. Then for all finite disjoint unions of measurable subsets, $\dot{\cup}_{j=1}^n\calA_j=\K$, we have that
\begin{equation}\label{eq:WF diff sect measure of disjoint sets}
(\nu_t(\calA_1),\ldots,\nu_t(\calA_n)),\quad t\geq 0,
\end{equation}
is an $n$-type Wright-Fisher diffusion of rate $\theta$. 
\end{prop}
\subsection{Main result}
We will establish in Appendix \ref{appendix:convergence to a QSD} the following. The absorbed process $(X_t)_{0\leq t<\tau_{\partial}}$ is Feller (there is no distinction between $C_0$-Feller and $C_b$-Feller as $\bar D$ is compact). We write $L$ for its infinitesimal generator. Then $(X_t)_{0\leq t<\tau_{\partial}}$ has a unique QSD, denoted by $\pi$, which is a left eigenmeasure of $L$. We denote the corresponding eigenvalue as $-\lambda<0$. Furthermore there exists a positive right eigenfunction $\phi\in \calD(L)\cap C^{2}(\bar D; \Rm_{>0})$, which is both the unique non-negative right eigenfunction and the unique right eigenfunction of eigenvalue $-\lambda$, up to rescaling. Throughout we normalise $\phi$ so that $\langle \pi,\phi\rangle=1$.

We may therefore define the constant
\begin{equation}\label{eq:const theta}
\Theta:=\frac{2\lambda \lvert\lvert \phi\rvert\rvert_{L^2(\pi)}^2}{\lvert\lvert \phi\rvert\rvert_{L^1(\pi)}^2}.
\end{equation}
We define the tilted empirical measure of the colours, denoted as $(\calY^N_t)_{0\leq t<\infty}$, by
\begin{equation}\label{eq:tilted empirical measure}
\begin{split}
\calY^N_t:=\frac{\frac{1}{N}\sum_{i=1}^N\phi(X^{i}_t)\delta_{\eta^{i}_t}}{\frac{1}{N}\sum_{i=1}^N\phi(X^{i}_t)}\in\calP(\K).
\end{split}
\end{equation}
Whereas consideration of this quantity shall play a crucial role in our proof, for the purposes of our theorem statement its role is to provide the initial condition of our scaling limit. To the authors' knowledge, this process is original. The proof of Theorem \ref{theo:Convergence to FV diffusion} shall be outlined in Subsection \ref{subsection:outline of proof strategy}, at which point we shall explain the role of $\calY^N_t$ in the proof.

Convergence will be stated in terms of the weak atomic metric on $\mathbb{K}$, denoted as $\Wat$. The space of probability measures on $\mathbb{K}$ equipped with the weak atomic metric is denoted by $\calP_{\Wat}(\mathbb{K})$. This metric was introduced by Ethier and Kurtz \cite{Ethier1994} in the context of population genetics. Convergence in the weak atomic metric is equivalent to having both weak convergence of measures and convergence of the sizes and locations of the atoms. We provide a definition of the weak atomic metric in Appendix \ref{appendix:Weak atomic metric}.

Our main theorem is then the following.

\begin{theo}\label{theo:Convergence to FV diffusion}
We take some deterministic initial profile $\nu^0\in\mathcal{P}(\K)$ and fix a Wright-Fisher process on $\calP(\mathbb{K})$ of rate $\Theta$ and initial condition $\nu_0=\nu^0$, which we denote as $(\nu_t)_{0\leq t<\infty}$. We consider a sequence of Fleming-Viot multi-colour Processes, denoted by $(((\vec{X}^N_t,\vec{\eta}^N_t))_{0\leq t<\infty}:2\leq N<\infty)$, such that 
\begin{equation}
\calP(\K)\ni\calY^N_0\ra \nu^0\in \calP(\K)\quad \text{in $\Wat$ in probability as}\quad N\ra\infty.
\end{equation}

We now rescale time according to $t\mapsto Nt$. Then $(\chi^{N}_{Nt})_{t> 0}$ converges to $(\nu_t)_{t> 0}$ in finite-dimensional distributions, in the following sense. We fix arbitrary $n<\infty$ and $\vec{t}=(t^1,\ldots,t^n)\in [0,\infty)^n$ such that $t^1\leq \ldots\leq t^n$. We consider arbitrary sequences $(\vec{t}^N)_{2\leq N<\infty}:=((t^N_1,\ldots,t^N_n))_{2\leq N\leq \infty}$ such that:
\begin{enumerate}
\item
$t^N_1\leq \ldots\leq t^N_n$ for all $2\leq N<\infty$;
\item
$t^N_i\ra t_i$ as $N\ra \infty$ for all $1\leq i\leq n$;
\item\label{enum:main theorem requirement that times are large}
$Nt^N_n\geq\ldots\geq Nt^N_1\ra\infty$ as $N\ra \infty$.
\end{enumerate}
We then have that
\begin{equation}\label{eq:main theorem convergence of empirical measures}
(\chi^{N}_{Nt_1^N},\ldots,\chi^{N}_{Nt_n^N}) \ra (\nu_{t_1},\ldots,\nu_{t_n})\quad\text{in}\quad (\calP_{\Wat}(\mathbb{K}))^n\quad \text{in distribution as}\quad N\ra\infty.
\end{equation}
\end{theo}

\begin{rmk}
If we take constant killing rate $\kappa\equiv 1$ and consider the corresponding Fleming-Viot multi-colour process, we recover the classical Moran model. This is well-known to converge to the Wright-Fisher process of rate $2$ \cite[(4.12)]{Ethier1993c}. On the other hand, we can check that $\Theta=2$ when $\kappa\equiv 1$.
\end{rmk}

\begin{rmk}
We observe that, unless $\phi$ is constant (which only happens if $\kappa$ is constant on $\bar D$), the empirical measures $\chi^N_0$ will, in general, not converge to the same limit as the tilted empirical measures $\calY^N_0$. We therefore no longer have \eqref{eq:main theorem convergence of empirical measures} if we drop the requirement that $Nt^N_1\ra \infty$ as $N\ra\infty$. This represents the following separation of timescales phenomenon.

We will establish in the proof of Theorem \ref{theo:Convergence to FV diffusion} that the tilted empirical measure $\calY^N_t$ evolves slowly over an $\calO(N)$ timescale, with $(\calY^N_{Nt})_{t\geq 0}$ converging to the Wright-Fisher process. We further establish that the empirical measure $\chi^N_t$ converges on a shorter $\calO(1)$ timescale to the tilted empirical measure $\calY^N_t$. Theorem \ref{theo:Convergence to FV diffusion} then follows by combining these two facts. 

Therefore for large $N$, the empirical measure $\chi^N_t$ quickly approaches $\nu^0$ over an $\calO(1)$ timescale, before evolving like the Wright-Fisher process over the longer $\calO(N)$ timescale.
\end{rmk}

\subsection{Background and related results}

A similar separation of timescales has been obtained by Méléard and Tran in \cite{Meleard2012}. They considered the evolution of traits in a population of individuals, where the individuals give birth (passing on their trait) and die in an age-dependent manner, and interact with each other through the effect of the common empirical measure of their traits upon their death rates (representing competition for resources). There the age component plays a similar role to spatial position in the present article. They found that the age component converges to a deterministic equilibria (which is dependent upon the traits) on a fast timescale, whilst the trait distribution evolves on a slow timescale, converging to a certain superprocess over the slow timescale as the population converges to infinity. 

Aside from obtaining a different limiting process, they also employ a different proof strategy. In their setup, individuals give birth and are killed at rates which ensure that the slow component does not have large drift terms on the fast timescale, whereas it does in the present setup. This necessitates the different proof strategy. In Subsection \ref{subsection:outline of proof strategy} we shall outline the proof strategy of Theorem \ref{theo:Convergence to FV diffusion}, at which point we shall elaborate on the difference between this proof and the proof in \cite{Meleard2012}. 

The ancestral paths of both the Fleming-Viot particle system and similar particle systems have been considered by a number of authors, for instance by M\'el\'eard and Tran \cite{Meleard2012}, Grigorescu and Kang \cite{Grigorescu2012} and Burdzy et. al. \cite{Bieniek2018,Burdzy2019,Burdzy2022,Burdzy2021a}. None of these make a link with the Wright-Fisher process. In a sequel to the present paper, we shall use Theorem \ref{theo:Convergence to FV diffusion} to link the ancestral paths of the Fleming-Viot particle system with a Wright-Fisher process on $\calP(C([0,T];\bar D))$. This link was included in the original preprint version of this paper \cite{Tough2021}, and earlier in the author's PhD thesis \cite[Chapter 4]{Tough2021b}. 

In \cite{Grigorescu2012}, Grigorescu and Kang constructed the immortal particle, also known as the spine, of the Fleming-Viot particle system - the unique ancestral path from time 0 to time $\infty$. They introduced the Fleming-Viot multi-colour process, with the colours belonging to a finite set, in order to construct this process. The construction of the spine of the Fleming-Viot particle system was later extended to a very general setting by Bieniek and Burdzy \cite[Theorem 3.1]{Bieniek2018}. Bieniek and Burdzy \cite[Section 5]{Bieniek2018} established that, when the state space is finite, the distribution of the spine of the Fleming-Viot particle system converges as $N\ra\infty$ to that of the driving Markov process $(X_t)_{0\leq t<\tau_{\partial}}$ conditioned never to be killed - referred to in the literature as the \textit{$Q$-process} \cite[Section 3]{Champagnat2014}. They conjectured that this is also true for general state spaces \cite[p.3752]{Bieniek2018}. Since then, Burdzy, Kołodziejek and Tadić in \cite{Burdzy2019,Burdzy2022} have established a law of the iterated logarithm \cite[Theorem 7.1]{Burdzy2022} which, as they explain, hints that the conjecture of Bieniek and Burdzy should hold in the setting they consider. None of these articles draw a link with the Wright-Fisher process. 

In a sequel to the present article, we shall prove Bieniek and Burdzy's conjecture, \cite[p.3752]{Bieniek2018}, in the setting of the present paper. This proof was included in the original preprint version of this paper \cite{Tough2021}, and earlier in the author's PhD thesis \cite[Chapter 4]{Tough2021b}. This was the first proof of the conjecture outside of the finite state space setting. Subsequent to \cite[Chapter 4]{Tough2021b} and \cite{Tough2021}, Burdzy and Engländer have established this conjecture in \cite{Burdzy2021a}, when the driving process is Brownian motion killed at the boundary of its bounded domain. We emphasise that the proof strategy due to Burdzy et al. in \cite{Bieniek2018,Burdzy2021a} is completely different to the proof due to the present author in \cite[Chapter 4]{Tough2021b} and \cite{Tough2021}, with no connection being made between the Fleming-Viot particle system and the Wright-Fisher process in \cite{Bieniek2018,Burdzy2021a}. Bieniek and Burdzy's proof when the state space is finite \cite[Section 5]{Bieniek2018} used the finiteness of the state space in a seemingly essential way - they used the fact that if two particles are at the same location they must have the same probability of being the spine, and moreover the particles can only be at a finite number of possible locations. Burdzy and Engländer were able to use the same argument in \cite{Burdzy2021a} when the driving process is Brownian motion killed at the boundary of its domain by dividing the domain up into cubes and using the form of the multidimensional Gaussian distribution to argue that any two particles in the same cube must have almost the same probability of being the spine. On the other hand, the proof appearing in \cite[Chapter 4]{Tough2021b} and \cite{Tough2021}, and which will appear in a sequel to the present article, instead leverages the connection between the Fleming-Viot particle system and the Wright-Fisher process established in Theorem \ref{theo:Convergence to FV diffusion}.

The $N$-branching Brownian motion ($N$-BBM) consists of $N$ particles evolving in between killing times as independent Brownian motions. At rate $N$, one kills the particle minimising or maximising a given fixed function. At the same time, as with the Fleming-Viot particle system, another particle chosen uniformly at random branches, so that the number of particles remains fixed. Clearly this particle system is similar to the Fleming-Viot particle system. Particle systems of this form were first introduced by Brunet and Derrida in \cite{Brunet1997}. Such particle systems have been studied, for instance, by Brunet and Derrida \cite{Brunet2001}, Durrett and Reminik \cite{Durrett2011}, Maillard \cite{Maillard2016}, and Berestycki, Brunet, Nolen and Penington \cite{Berestycki2022}. The genealogy of these particles systems has received particular attention - see also the work of Brunet, Derrida, Mueller and Munier \cite{Brunet2006,Brunet2007}, Mallein \cite{Mallein2017} and Penington, Roberts and Talyigás \cite{Penington2022}. 

For the $N$-BBM studied in \cite{Maillard2016}, the particles are in $1$ dimension with the leftmost particle being killed at each killing time. It is a hard open problem to show that the genealogy of this particle system is given by a Bolthausen-Sznitman coalescent \cite[p.1066]{Maillard2016}, so we should not expect a Wright-Fisher process scaling limit. This conjecture has been proven for the similar near-critical branching Brownian motion by Berestycki, Berestycki and Schweinsberg in \cite{Berestycki2013}. On the other hand, in the ``Brownian bees'' particle system considered in \cite{Berestycki2022}, it is the particle furthest away from $0$ which is killed. In contrast to the $N$-BBM, we should expect the this particle system to have a Wright-Fisher process limit after rescaling time by $t\mapsto Nt$ as in Theorem \ref{theo:Convergence to FV diffusion}, in the opinion of the present author. The key distinction between these two Brunet-Derrida-type particle systems is that the killing mechanism in the latter has the effect of constraining the mass of particles. However, the genealogy of the Brownian bees particle system has not yet been addressed, nor has a Wright-Fisher process limit previously been established for any variant of this particle system.

A scaling limit for the geneaology of a sequential Markov chain Monte Carlo algorithm was established by Brown, Jenkins, Johansen and Koskela in \cite[Theorem 3.2]{Brown2021}. This captures the phenomenon of ancestral degeneracy, which has a substantial impact on the performance of the algorithm. They established that the geneaology of an $n$-particle sample converges to Kingman's $n$-coalescent as the number of particles goes to infinity and time is suitably rescaled. This is suggestive of a Wright-Fisher process, since Kingman's coalescent is dual to the Wright-Fisher process (see \cite[Appendix A]{Labbe2013}), but no such connection is made.

In the engineering literature, Mulatier, Dumonteil, Rosso and Zoia \cite{DeMulatier2015} considered a particle system whereby $N$ Brownian particles branch at a rate $\lambda$, at which point another particle chosen uniformly at random is removed, conserving the number of particles. Clearly this is very similar to the Fleming-Viot particle system, with the difference being that here particle births trigger another particle chosen uniformly at random to be killed, rather than vice-versa. This is used as a toy model for neutrons in a nuclear reactor and their Monte Carlo simulation. They investigated the phenomenon of ``clustering'', in which particles cluster together in Monte-Carlo simulations of nuclear reactors, which has a substantial impact on the accuracy of these simulations. They explained this phenomenon as occurring when particle ancestries coalesce more quickly than particles are able to explore the space. They argued that this should occur on a timescale of $\frac{N}{\lambda}$. However, it is unknown how quickly ancestries coalesce for such systems (when the branching rate is non-constant), even at the level of a conjecture. It should be straightforward to replicate the proof in the present paper for these systems, thereby quantifying how quickly ancestries coalesce via an analogue of Theorem \ref{theo:Convergence to FV diffusion}. This would indicate how large $N$ should be to avoid clustering. We will see in the following subsection that ancestral coalescence occurs more quickly when $\phi$ is non-constant (but $N$ and $\lambda$ are the same), so that a larger $N$ would be needed to avoid clustering.

\subsection{Effective population size}

In population genetics, variance effective population size refers to the population of an idealised, spatially unstructured population with the same genetic drift per generation. For a variety of reasons, this effective population size is generally observed to be considerably less than the census population size \cite{Frankham1995}. 

We recall that $(\pi,-\lambda,\phi)$ is the principal eigentriple of the infinitesimal generator $L$. We obtained in Theorem \ref{theo:Convergence to FV diffusion} that, after rescaling time by $t\mapsto Nt$, the Fleming-Viot multi-colour process converges to a Wright-Fisher process of rate $\Theta:=\frac{2\lambda \lvert\lvert \phi\rvert\rvert_{L^2(\pi)}^2}{\lvert\lvert \phi\rvert\rvert_{L^1(\pi)}^2}$. It is straightforward to combine Theorem \ref{theo:hydrodynamic limit for multicolour process} with Theorem \ref{theo:convergence to QSD for reflected diffusion with soft killing} to establish that individuals in the Fleming-Viot multi-colour process die, on average, $\lambda$ times per unit time. If we remove space, and instead assume that each individual is killed at fixed Poisson rate $\kappa\equiv\lambda$, we obtain the classical static Moran model. We therefore define the variance effective population here to be the size of an equivalent static Moran model. 

The Wright-Fisher process is well-known to arise as the limit of suitably rescaled Moran models \cite[(4.12)]{Ethier1993c}. If we let $(\vec{\eta}^{\text{Moran},N}_t)_{0\leq t<\infty}$ be the $N$-individual static Moran model (where each individual dies at Poisson rate $\lambda$), and define the constant $c=\frac{\Theta}{2\lambda}=\Big(\frac{\lvert\lvert \phi\rvert\rvert_{L^1(\pi)}}{\lvert\lvert \phi\rvert\rvert_{L^2(\pi)}}\Big)^2$, we have that $\vec{\eta}^{\text{Moran},\lfloor cN\rfloor}_{Nt}$ converges to a Wright-Fisher process of rate $\Theta$. It follows that 
\begin{equation}\label{eq:effective population}
N_{\text{eff}}\sim \Big(\frac{\lvert\lvert \phi\rvert\rvert_{L^1(\pi)}}{\lvert\lvert \phi\rvert\rvert_{L^2(\pi)}}\Big)^2N.
\end{equation}
We observe that $N_{\text{eff}}\leq N$, with equality if and only if $\phi$ is constant on $\bar D$, which is equivalent to $\kappa$ being constant on $\bar D$.

We offer the following heuristic interpretation of \eqref{eq:effective population}. We have from Theorem \ref{theo:convergence to QSD for reflected diffusion with soft killing} that
\[
\Pm_x(\tau_{\partial}>t)\sim \phi(x)e^{-\lambda t}.
\]

On the other hand, the profile of the particles in the Fleming-Viot particle system will settle upon a close approximation of $\pi$. Therefore if $\lvert\lvert \phi\rvert\rvert_{L^2(\pi)}$ is much larger than $\lvert\lvert \phi\rvert\rvert_{L^1(\pi)}$, then a small subset of individuals at any given time should be expected to subsequently survive for much longer than the average. These individuals will therefore have far more children than the average, having the effect of speeding up the coalescence time, hence reducing the effective population size.

\subsection{A hydrodynamic limit theorem for the Fleming-Viot multi-colour process}

Both the proof of Theorem \ref{theo:Convergence to FV diffusion} and our heuristic explanation of it will make use of the following hydrodynamic limit theorem for the Fleming-Viot multi-colour process. The hydrodynamic limit we obtain is given by the laws of the following killed Markov process.
\begin{defin}\label{defin:limit for multi colour over fixed times}
We define a $\bar D\times\K$-valued killed strong Markov process, denoted by $((X_t,\eta_t))_{0\leq t<\tau_{\partial}}$, as follows. The process evolves in the first variable like the killed normally-reflected diffusion $(X_t)_{0\leq t<\tau_{\partial}}$ defined in \eqref{eq:reflected diffusion SDE}, with the killing time of $((X_t,\eta_t))_{0\leq t<\tau_{\partial}}$ being the same as the killing time of $(X_t)_{0\leq t<\tau_{\partial}}$. In the second variable $\eta_t$ is a constant element of $\K$ up to the killing time $\tau_{\partial}$, so that $\eta_t=\eta_0$ for all $0\leq t<\tau_{\partial}$. After the killing time the process is sent to a fixed cemetery state.
\end{defin}

\begin{theo}\label{theo:hydrodynamic limit for multicolour process}
We consider the Fleming-Viot multi-colour process $((\vec{X}^N_t,\vec{\eta}^N_t))_{t\geq 0}$ for $N\geq 2$. Then there exists constants $C_{T,N}$ for $0\leq T<\infty$ and $N\geq 2$ such that $C_{T,N}\ra 0$ as $N\ra \infty$, and such that for any initial condition $(\vec{X}^N_0,\vec{\eta}^N_0)$ and any $f\in \calB_b(\bar D\times \K;\Rm)$, we have that
\begin{align}\label{eq:hydrodnamic limit of FVMC in appendix}
\expE_{(\vec{X}^N_0,\vec{\eta}^N_0)}\Big[\sup_{t\leq T}\Big\lvert \Big(\frac{1}{N}\sum_{i=1}^N\delta_{(X^{N,i}_t,\eta^{N,i}_t)}-\Law_{\frac{1}{N}\sum_{i=1}^N\delta_{(X^{N,i}_0,\eta^{N,i}_0)}}((X_t,\eta_t))\Big)(f)\Big\rvert\Big]\leq C_{T,N}\lvert\lvert f\rvert\rvert_{\infty},\\
\expE_{(\vec{X}^N_0,\vec{\eta}^N_0)}\Big[\sup_{t\leq T}\Big\lvert J^N_t-\ln\Pm_{\frac{1}{N}\sum_{i=1}^N\delta_{(X^{N,i}_0,\eta^{N,i}_0)}}(\tau_{\partial}>t)\Big\rvert\wedge 1 \Big]\leq C_{T,N}.\label{eq:hydrodnamic limit of FVMC in appendix number of jumps}
\end{align}
\end{theo}

\begin{proof}[Proof of Theorem \ref{theo:hydrodynamic limit for multicolour process}]
We take the Fleming-Viot particle system associated to the killed strong Markov process $((X_t,\eta_t))_{0\leq t<\tau_{\partial}}$ defined in Definition \ref{defin:limit for multi colour over fixed times} (which is well-defined since the killing rate is bounded). We observe that its dynamics are identical to that of the Fleming-Viot multi-colour process $(\vec{X}^N_t,\vec{\eta}^N_t)_{t\geq 0}$ associated to $(X_t)_{0\leq t<\tau_{\partial}}$. We are therefore able to apply \cite[Theorem 2.2]{Villemonais2011} to the Fleming-Viot multi-colour process.

The statement of \cite[Theorem 2.2]{Villemonais2011} only gives an estimate of the particle system at fixed times. However, its proof relied on a martingale decomposition, \cite[Theorem 2.2]{Villemonais2011}, with $L^2$ controls obtained on the two martingales, \cite[(2.8) and (2.9)]{Villemonais2011}. By applying Doob's $L^2$-martingale inequality, these controls become uniform over the time horizon $[0,T]$. We thereby make \cite[Theorem 2.2]{Villemonais2011} uniform over the time horizon $[0,T]$, at the cost of the estimate in \cite[Theorem 2.2]{Villemonais2011} being multiplied by $4$. Applying this uniform estimate to the Fleming-Viot multi-colour process, we obtain \eqref{eq:hydrodnamic limit of FVMC in appendix}. We similarly obtain \eqref{eq:hydrodnamic limit of FVMC in appendix number of jumps} from \cite[$1^{\text{st}}$ eq. on p.450]{Villemonais2011}.
\end{proof}

We prove in the appendix that $((X_t,\eta_t))_{0\leq t<\tau_{\partial}}$ has the following large-time limit. 
\begin{prop}\label{prop:convergence in time for killed Markov 1 in main theorem proof}
For arbitrary sequences $(x^i,\eta^i)_{1\leq i\leq n}$ in $\bar D\times \K$ we consider the process \linebreak$(X_t,\eta_t)_{0\leq t<\tau_{\partial}}$ with initial distribution given by the empirical measure $\frac{1}{n}\sum_{i=1}^n\delta_{(x^i,\eta^i)}$. Then there exists $c_t\ra 0$ as $t\ra \infty$ such that, for all sequences $(x^i,\eta^i)_{1\leq i\leq n}$ in $\bar D\times \K$ and all $n\in \Nm$, we have
\begin{equation}\label{eq:convergence in time for killed Markov 1 in intro}
\begin{split}
\Big\lvert\Big\lvert\Law_{\frac{1}{n}\sum_{i=1}^n\delta_{(x^i,\eta^i)}}((X_t,\eta_t)\lvert \tau_{\partial}>t)-\frac{\sum_{i=1}^n\phi(x^i)\pi\otimes\delta_{\eta^i}}{\sum_{i=1}^n\phi(x^i)}\Big\rvert\Big\rvert_{\TV}\leq c_t,\quad 0\leq t<\infty.
\end{split}
\end{equation}
\end{prop}

\subsection{Heuristics for the proof of Theorem \ref{theo:Convergence to FV diffusion}}\label{subsection:outline of proof strategy}
\subsubsection*{The principal difficulty to be addressed}
Méléard and Tran considered in \cite{Meleard2012} the ancestries of a similar particle system in \cite{Meleard2012}. There the individuals in the population have a trait and an age, with the individuals giving birth (passing on their trait) and dying in an age-dependent manner. The age component plays a similar role to spatial position in the present article.  However, aside from obtaining a different scaling limit, they also employed a different proof strategy. 

The proof of Méléard and Tran in \cite{Meleard2012} extended to the particle system setting the strategy of Kurtz \cite{Kurtz1992} and Ball, Kurtz, Popovic and Rempala \cite{Ball2006}, which concerned diffusions. In contrast, the proof in the present article extends to the particle system setting techniques of Katzenberger \cite{Katzenberger1991} (the author is not aware of this technique previously having been extended to the particle system setting), which also concerned diffusion processes. This is necessitated by the following qualitative difference between the two particle systems. 

In \cite{Meleard2012}, individuals have a trait $x$ (the slow variable) and an age $a$ (the fast variable). The speed-up of the timescale is given by the parameter $n$. On the fast timescale, they give birth at rate $n r(x,a)+b(x,a)$, whilst dying at rate $nr(x,a)+d(x,a)$. We observe that the fast term, $nr(x,a)$, is the same in both the former and the latter. Consequentially, when they formulate the corresponding martingale problem, the slow variable does not have a large drift term on the fast timescale. The terms $b(x,a)$ and $d(x,a)$ may change quickly due to the fast evolution of the age term $a$ - this is dealt with via averaging - but they remain $\calO(1)$ on the fast timescale.

We may contrast this with the Fleming-Viot multi-colour process. We recall that the time change is $t\mapsto Nt$. We consider a test function $f\in C_b(\K)$ and observe that on the fast timescale the empirical measure of the colours evaluated against $f$, $\chi^N_{Nt}(f)$, satisfies
\begin{equation}\label{eq:sde for empirical measure of the colours proof explanation}
d\chi^N_{Nt}(f)=
\sum_{i=1}^N\kappa(X^i_{Nt})\Big[\frac{1}{N-1}\sum_{j\neq i}[f(\eta^j_{Nt})-f(\eta^i_{Nt})]\Big]dt+d\text{(Martingale)}_t.
\end{equation}
We see that the drift term is of $\calO(N)$ on the fast timescale. In particular the change in position of an individual particle has an $\calO(1)$ effect on the drift. A large deviations principle for the Fleming-Viot multi-colour process would provide controls on the drift valid over a sufficiently large timescale (a LDP for the Fleming-Viot particle system driven by Brownian motion with soft killing was established by Grigorescu in \cite{Grigorescu2007}), but would only control the drift on a fast timescale to $\calO(N)$. Since microscopic fluctuations in the position of individual particles have an $\calO(1)$ effect on the drift, there would not seem to be any hope of obtaining adequate controls on the drift term in order to apply a compactness-uniqueness argument (in which one characterises the martingale problem solved by subsequential limits).

The key idea allowing us to deal with these large drift terms will be to consider the tilted empirical measure $\calY^N_t$, which we recall was given in \eqref{eq:tilted empirical measure} as
\[
\calY^N_t:=\frac{\frac{1}{N}\sum_{i=1}^N\phi(X^{i}_t)\delta_{\eta^{i}_t}}{\frac{1}{N}\sum_{i=1}^N\phi(X^{i}_t)}\in\calP(\K).
\]

\subsubsection*{Motivation for choosing $\calY^N_t$}

We take inspiration from Katzeberger's approach in \cite{Katzenberger1991}. Consider a dynamical system in Euclidean space, $\dot{x}_t=b(x_t)$, with an attractive manifold of equilibrium $\calM$ and flow map $\varphi(x,s)$. Katzenberger \cite{Katzenberger1991} established (under reasonable conditions) that the long term dynamics of the randomly perturbed dynamical system,
\begin{equation}\label{eq:simple randomly perturbed dynamical system Katzenberger analogy}
    dx^{\epsilon}_t=b(x^{\epsilon}_t)dt+\epsilon dW_t,
\end{equation}
can be obtained by considering the following nonlinear projection onto the manifold of equilibria,
\begin{equation}\label{eq:pi function Katzenberger analogy}
\varpi(x):=\lim_{\substack{s\ra\infty}}\varphi(x,s)\in \calM. 
\end{equation}
We summarise Katzenberger's idea as follows. Since $\nabla \varpi\cdot b\equiv 0$, the Stratanovich chain rule implies that
\[
d\varpi(x^{\epsilon}_t)=\epsilon \nabla \varpi(x^{\epsilon}_t)\circ dW_t.
\]
In particular, the large drift term has been eliminated from the above expression. We may rescale time to see that $\varpi(x^{\epsilon}_{\frac{t}{\epsilon^2}})$ satisfies
\[
d\varpi(x^{\epsilon}_{\frac{t}{\epsilon^2}})=\nabla \varpi(x^{\epsilon}_{\frac{t}{\epsilon^2}})\circ d\tilde{W}_t,
\]
whereby $\tilde{W}_t$ is the Brownian motion $\tilde{W}_t:=\epsilon W_{\frac{t}{\epsilon^2}}$. Since the dynamical system will be pushed towards the attractive manifold of equilibrium on a fast timescale, one can then argue that
\begin{equation}\label{eq:Katzenberger close to manifold of equilibria}
x^{\epsilon}_{\frac{t}{\epsilon^2}}\approx \varpi(x^{\epsilon}_{\frac{t}{\epsilon^2}}).
\end{equation}
We can therefore obtain a scaling limit for $x^{\epsilon}_{\frac{t}{\epsilon^2}}$. This scaling limit is a diffusion on $\calM$.

Whilst Katzenberger's results in \cite{Katzenberger1991} were restricted to finite-dimensions, we may ask what the analogue of $\varpi(x^{\epsilon}_t)$ is in the present setting? We will see that $\calY^N_t$ can be thought of as being analogous to the quantity $\varpi(x^{\epsilon}_t)$ considered by Katzenberger. 

We denote by $((X_t,\eta_t))_{0\leq t<\tau_{\partial}}$ the killed Markov process defined in Definition \ref{defin:limit for multi colour over fixed times}. It follows from Theorem \ref{theo:hydrodynamic limit for multicolour process} that we can think of the Fleming-Viot multi-colour process as a random perturbation of the dynamical system with flow map
\begin{equation}\label{eq:flow of laws for katzenberger analogy}
\calP(\bar D\times \K)\times [0,\infty)\ni (\upsilon,s)\mapsto \Law_{\upsilon}((X_s,\eta_s)\lvert \tau_{\partial}>s)\in \calP(\bar D\times \K).
\end{equation}
Proposition \ref{prop:convergence in time for killed Markov 1 in main theorem proof} provides for the large-time limits of this flow. We therefore see from Proposition \ref{prop:convergence in time for killed Markov 1 in main theorem proof} that the analogue of $\varpi(x^{\epsilon}_t)$ is given by
\[
\frac{\frac{1}{N}\sum_{i=1}^N\phi(X^{N,i}_t)\pi\otimes\delta_{\eta^{N,i}_t}}{\frac{1}{N}\sum_{i=1}^N\phi(X^{N,i}_t)}=\pi\otimes \calY^N_t.
\]
We discard $\pi$ since it is constant, leaving only $\calY^N_t$. 

There is a second heuristic reason for examining $\calY^N_t$. If $x(t)$ and $y(t)$ both satisfy the ODEs $\dot{x}=c(t)x$ and $\dot{y}=c(t)y$ for the same $c(t)$, then $\frac{y(t)}{x(t)}$ is constant. If we now instead consider the SDEs $dX_t=c_tX_tdt+\epsilon dW_t$ and $dY_t=c_tY_tdt+\epsilon dW_t$, $\frac{Y_t}{X_t}$ will satisfy an SDE with only $\calO(\epsilon^2)$ drift terms, since the $\calO(1)$ terms will cancel out as in the deterministic case (one can check this using Ito's lemma).

We now define for $\E\in\calB(\K)$ the following, which shall be used throughout the proof of Theorem \ref{theo:Convergence to FV diffusion}, 
\begin{equation}\label{eq:quantity PNE etc}
P^{N,\E}_t:=\frac{1}{N}\sum_{i=1}^N\Ind_{\eta^i_t\in \E}\phi(X^i_t),\;\; Q^N_t:=P^{N,\K}=\frac{1}{N}\sum_{i=1}^N\phi(X^i_t)\;\;\text{and}\;\; Y^{N,\E}_t:=\calY^N_t(\E)=\frac{P^{N,\E}_t}{Q^N_t}.
\end{equation}
The important point is that, to leading order, both $P^{N,\E}$ and $Q^N$ evolve with drift terms proportional to themselves, with the same constant of proportionality. Indeed on the slow timescale the killed process $X_t$ satisfies
\[
d\phi(X_t)=L\phi(X_t)+\text{martingale terms}=-\lambda \phi(X_t)+\text{martingale terms.}
\]
Therefore between jumps, and including the process of killing the particles, the quantities $P^{N,\E}_t$ and $Q^N_t$ evolve with drift terms $-\lambda P^{N,\E}_t dt$ and $-\lambda Q^N_t dt$ respectively. Furthermore if particle $(X^{N,i},\eta^{N,i})$ dies at time $t$, $\frac{1}{N}\phi(X^{N,i}_t)\Ind(\eta^{N,i}_t\in \E)$ (respectively $\frac{1}{N}\phi(X^{N,i}_t)$) is added to the value of $P^{N,\E}_t$ (respectively $Q^{N,\E}_t$), the expected value of which is $P^{N,\E}_{t-}+\calO(\frac{1}{N})$ (respectively $Q^{N,\E}_{t-}+\calO(\frac{1}{N})$). This occurs at Poisson rate $\kappa(X^{i}_t)$. Thus, after the time-change $t\mapsto Nt$, we can write
\begin{equation}\label{eq:expression for PE Q proof explanation}
\begin{split}
dP^{N,\E}_{Nt}=\Big[-\lambda N+\sum_{i=1}^N\kappa(X^i_t)\Big]P^{N,\E}_{Nt}dt+\calO(1)dt+d(\text{martingale})_t,\\
dQ^{N}_{Nt}=\Big[-\lambda N+\sum_{i=1}^N\kappa(X^i_t)\Big]Q^{N}_{Nt}dt+\calO(1)dt+d(\text{martingale})_t.
\end{split}
\end{equation}
In particular, on the fast timescale given by $t\mapsto Nt$, both $P^{N,\E}_{Nt}$ and $Q^N_{Nt}$ both evolve with drift proportional to themselves, with the same constant of proportionality given by
\begin{equation}\label{eq:constant of proportionalty proof explanation}
-\lambda N+\sum_{i=1}^N\kappa(X^i_t). 
\end{equation}
We observe that the change in position of an individual particle has an $\calO(1)$ effect on the constant of proportionality \eqref{eq:constant of proportionalty proof explanation}. However these large effects cancel out by placing the normalisation at microscopic scale in the denominator, as the constant of proportionality in both the numerator and denominator must be the same.

From these considerations we see that, having rescaled time by $t\mapsto Nt$, $\calY^N_{Nt}$ should satisfy an SDE with $\calO(1)$ drift terms. It is straightforward to see that the martingale terms will have $\calO(1)$ quadratic variation on this timescale. It follows that $\calY^N_{Nt}$ should be susceptible to a compactness-uniqueness argument, in which we establish tightness before uniquely characterising subsequential limits by characterising their drift and quadratic variation. We shall thereby obtain a scaling limit for $\calY^N_{Nt}$. We note that since the leading order terms in \eqref{eq:expression for PE Q proof explanation} cancel out, we shall need to calculate the ``$\calO(1)dt$'' higher order terms, which is responsible for much of the computational complexity in the proof of Theorem \ref{theo:Convergence to FV diffusion}.

\subsubsection*{The relationship between $\chi^{N}_{Nt}$ and $\calY^N_{Nt}$}

The above will allow us to characterise the limit in distribution of $(\calY^N_{Nt})_{t\geq 0}$. Our goal, however, is to characterise the limit in distribution of $(\chi^N_{Nt})_{t\geq 0}$. We would therefore like to relate $\calY^N_{Nt}$ with $\chi^{N}_{Nt}$. 

The key observation here is that, on the original slow timescale, the colour of a particle and its spatial position become ``independent'' after an $\calO(1)$ time. To be more precise, for any given $A\subseteq \K$, the spatial profile of particles whose colours belong to $A$,
\[
\frac{\sum_{i=1}^N\Ind(\eta^i_t\in A)\delta_{X^i_t}}{\lvert\{i:\eta^i_t\in A\}\rvert},
\]
converges over an $\calO(1)$ timescale to the quasi-stationary distribution $\pi$, a deterministic profile. Thus for different subsets $A,B\subseteq \K$, the number of particles with colours belonging to $A$ and $B$ may well be different, but the spatial profiles of the two sets of particles will be the same for large $N$. Since the particles corresponding to different colours have the same spatial profile, weighting the empirical measure of the colours according to the right eigenfunction evaluated at the corresponding spatial positions will have no effect. It follows that $\chi^N_t$ and $\calY^N_t$ will be close after an $\calO(1)$ time. On the fast timescale, $\chi^N_{Nt}$ will therefore be close to $\calY^N_{Nt}$. This is analogous to the second step in Katzenberger's approach in \cite{Katzenberger1991}, described above in \eqref{eq:Katzenberger close to manifold of equilibria}. 

The proof of Theorem \ref{theo:Convergence to FV diffusion} will therefore follow by establishing that $(\calY^N_{Nt})_{t\geq 0}$ converges to the Wright-Fisher process, and showing that $\chi^N_{Nt}$ is close to $\calY^N_{Nt}$.

\subsection{Why is the limit a Wright-Fisher process?}

It follows from the above heuristic that $\chi^N_t$ should evolve over an $\calO(N)$ timescale, and that $\chi^N_{Nt}$ should converge to some $\calP(\K)$-valued process (at least on subsequences). In the proof of Theorem \ref{theo:Convergence to FV diffusion}, we will calculate that the limit is a Wright-Fisher process. However, it is not readily apparent from this why the limit should necessarily be a Wright-Fisher process. We offer here a heuristic argument for why we should expect the limit to be a Wright-Fisher process.

We let $(\hat{\nu}_t)_{t\geq 0}$ be the limit to be determined of $(\chi^N_{Nt})_{t\geq 0}$ (perhaps along a subsequence). By the aforedescribed separation of timescales phenomenon, this will be a $\calP(\K)$-valued process, with the dependence on the spatial component ``averaged out''. We consider an arbitrary measurable map $\iota:\K\ra \K$. We can think of $\iota$ as relabelling the colours. The key observation is that $\{(X^{N,i}_t,\iota(\eta^{N,i}_t)):1\leq i\leq N\}$ is itself a Fleming-Viot multi-colour process - the Fleming-Viot multi-colour process remains one after relabelling the colours. It follows that whatever dynamics $(\hat{\nu}_t)_{t\geq 0}$ has, $(\iota_{\#}\hat{\nu}_t)_{t\geq 0}$ must have the same dynamics. This allows us both to exchange colours and to relabel different colours as the same colour. 

It follows that there should exist continuous functions
\[
b,\sigma_{11}:[0,1]\ra \Rm\quad\text{and}\quad \sigma_{12}:\{(p,q)\in [0,1]^2:p+q\leq 1\}\ra \Rm
\]
such that the following are continuous martingales for all disjoint $A_1,A_2\in\calB(\K)$,
\[
\begin{split}
\hat{\nu}_t(A_1)-\int_0^tb(\hat{\nu}_s(A_1))ds,\quad (\hat{\nu}_t(A_1))^2-\int_0^t\sigma_{11}(\hat{\nu}_s(A))ds,\\\text{and}\quad  (\hat{\nu}_t(A_1))(\hat{\nu}_t(A_2))-\int_0^t\sigma_{12}(\hat{\nu}_s(A_1),\hat{\nu}_s(A_2))ds.
\end{split}
\]
Moreover, since a colour of mass $p$ and a colour of mass $q$ can be relabelled to be a single colour of mass $p+q$, it is clear that
\[
\begin{split}
b(p+q)=b(p)+b(q),\;\; \sigma_{11}(p+q)=\sigma_{11}(p)+\sigma_{11}(q)+2\sigma_{12}(p,q),\;\; 0\leq p,q\leq p+q\leq 1\\ \sigma_{12}(p_1+p_2,q_1+q_2)=\sum_{1\leq i,j\leq 2}\sigma_{12}(p_i,q_j),\;\; 0\leq p_1,p_2,q_1,q_2\leq p_1+p_2+q_1+q_2\leq 1.
\end{split}
\]
Furthermore, the whole colour space $\K$ must have total mass $1$, so $\hat{\nu}_t(\K)\equiv 1$. From these considerations, we see that the only possibility is that, for some constant $\theta$,
\[
b\equiv 0, \quad \sigma_{11}(p)=\frac{\theta}{2}(p-p^2)\quad\text{and}\quad \sigma_{12}(p,q)=-\frac{\theta}{2}pq.
\]
We recognise the Wright-Fisher diffusion described in Subsection \ref{subsection:WF process intro}. In light of Proposition \ref{prop:basic facts WF superprocess}, it is therefore natural that our unknown limit $(\hat{\nu}_t)_{t\geq 0}$ should be a Wright-Fisher process.

\subsection{Hard catalyst killing}

The setting of the present paper - in which the Fleming-Viot particle system is driven by diffusions with soft killing - has been chosen to establish the connection between the Fleming-Viot process and the Wright-Fisher process with a minimum of technical difficulties. Nevertheless, in Section \ref{section:hard killing extension} we will extend this connection to the original setting considered by Burdzy, Hołyst and March \cite{Burdzy2000}, in which the Fleming-Viot particle system is driven by Brownian motion with instantaneous killing at the boundary (\textit{hard killing}). To avoid switching back and forth between Fleming-Viot particle systems with different dynamics, we will only consider the case of hard killing in Section \ref{section:hard killing extension}, the final section prior to the appendix, and in Appendix \ref{appendix:hard killing}. Our results in the case of hard killing are therefore stated and proved in Section \ref{section:hard killing extension}.

We emphasise that the proof strategy employed in the present paper may be applied to the Fleming-Viot particle system driven by more general killed Markov processes. The principal requirements to apply this proof strategy are that: 
\begin{enumerate}
\item
the driving killed Markov process $(X_t)_{0\leq t<\tau_{\partial}}$ is Feller;
\item
its infinitesimal generator has a positive, continuous and bounded principal right eigenfunction $\phi$;
\item
$\Law_{\mu}(X_t\lvert \tau_{\partial}>t)$ converges to a unique quasi-stationary distribution for any initial condition $\mu$;
\item\label{enum:requirements in intro requirement to constrain mass large timescale}
we can constrain the empirical measure of the spatial positions of the particles $m_t^N$ to a tight set of measures over any $\calO(N)$ timescale, precluding in particular the mass from accumulating at the boundary.
\end{enumerate}

In the case of hard killing at the boundary in a bounded domain, the main additional difficulty is to establish Requirement $4$. We will obtain such controls for the Fleming-Viot particle system driven by Brownian motion with hard killing in Section \ref{section:hard killing extension}. With these controls in hand, the extension of our results to this setting proceeds by essentially the same proof. 

When the domain is unbounded, the situation is much more delicate. For diffusions on the positive real line $\Rm_{>0}$ with hard killing at $0$, one could probably establish similar results for Ornstein-Uhlenbeck dynamics, using the strong negative drift to control the particles far away from $0$ over an $\calO(N)$ time scale. For this process the principal right eigenfunction is unbounded (it's given by $\phi(x)=x$) - instead strong controls on the mass of particles far away from $0$ (where $\phi$ is large) over an $\calO(N)$ timescale would be required to replace the boundedness of $\phi$. To be more precise we would need to show, for any $T<\infty$, that $\sup_{0\leq t\leq NT}\frac{1}{N}\sum_{1\leq i\leq N}(\phi(X^{N,i}_t))^2$ is bounded by some uniform constant with probability arbitrarily close to $1$, uniformly in $N$. On the other hand, we should not expect the Fleming-Viot particle system driven by Brownian motion with drift $-1$ to have a Wright-Fisher process scaling limit, this drift being too weak to adequately control the particles. Indeed, it is a hard open problem to show that the genealogy of the very similar $N$-BBM is given by a Bolthausen-Sznitman coalescent \cite[p.1066]{Maillard2016}.

\subsection{Structure of the paper}

A summary of the notation which we shall need for our proof is given in Section \ref{section:notation for the proof}. The proof of Theorem \ref{theo:Convergence to FV diffusion} shall rely on a number of calculations of the quantity $\calY^N_t$, defined in \eqref{eq:tilted empirical measure}. To avoid obscuring our proof with calculations, we will carry out these calculations in Section \ref{section: characterisation of Y}. We shall then prove Theorem \ref{theo:Convergence to FV diffusion} in Section \ref{section:proof of convergence to FV diffusion}. We will extend our results to the Fleming-Viot multi-colour process driven by Brownian motion with instantaneous killing at the boundary in Section \ref{section:hard killing extension}. We conclude with the appendix.

\section{Notation for the proof of Theorem \ref{theo:Convergence to FV diffusion}}\label{section:notation for the proof}

We recall from \eqref{eq:quantity PNE etc} that we define for $\E\in\calB(\K)$,
\[
P^{N,\E}_t:=\frac{1}{N}\sum_{i=1}^N\Ind_{\eta^i_t\in \E}\phi(X^i_t),\;\; Q^N_t:=P^{N,\K}=\frac{1}{N}\sum_{i=1}^N\phi(X^i_t)\;\;\text{and}\;\; Y^{N,\E}_t:=\calY^N_t(\E)=\frac{P^{N,\E}_t}{Q^N_t}.
\]
We recall the definition of $m^N_t$ and $\chi^N_t$ from \eqref{eq:spatial and colour empirical measures}, and further define $m^{N,\E}_t$ for $\E\in\calB(\K)$,
\begin{equation}
m^N_t:=\frac{1}{N}\sum_{i=1}^N\delta_{X^{N,i}_t},\quad  \chi^{N}_t:=\frac{1}{N}\sum_{i=1}^N\delta_{\eta^{i}_t}\quad\text{and}\quad m_t^{N,\E}:=\sum_{i=1}^N\Ind(\eta^{i}_t\in \E)\delta_{X^{i}_t}=m^N_t(\E).
\end{equation}
We recall from Appendix \ref{appendix:reflected diffusions} that the infinitesimal generator of the reflected diffusion with (respectively without) soft killing is denoted by $L$ (respectively $L_0$). We further recall that the Carre du champs operator of the latter is denoted as $\Gamma_0$, and is defined on the algebra $\calA$. This algebra contains the principal right eigenfunction $\phi$ of $L$, by Theorem \ref{theo:convergence to QSD for reflected diffusion with soft killing}. We further define
\begin{equation}\label{eq:lambdaNE definition}
\begin{split}
\Lambda^{N,\E}_t:=\langle m^{N,\E}_t,\Gamma_{0}(\phi)+\kappa\phi^2\rangle +\langle m^{N,\E}_t,\phi^2\rangle \langle m^{N}_t,\kappa\rangle\;\; \text{for}\;\; \E\in\calB(\K),\;\;\text{and}\;\; \Lambda^N_t:=\Lambda^{N,\K}_t.
\end{split}
\end{equation}

\subsection{$\calO$ Notation}\label{section:O and U notation}

The following notation shall significantly simplify our calculations.

For any finite variation process $(X_t)_{0\leq t<\infty}$ we write $V_t(X)$ for the total variation
\begin{equation}\label{eq:finite variation notation}
V_t(X)=\sup_{0=t_0<t_1<\ldots<t_n=t}\sum_{i=0}^{n-1}\lvert X_{t_{i+1}}-X_{t_i}\rvert.
\end{equation}
Moreover for all c\`adl\`ag processes $(X_t)_{0\leq t<\infty}$ we write
\begin{equation}
\Delta X_t= X_t-X_{t-}.
\end{equation}

Given some family of random variables $\{X^{N}:N\in\mathbb{N}\}$ and non-negative random variables $\{Y^{N}:N\in\mathbb{N}\}$, we say that $X^{N}=\calO(Y^{N})$ if there exists a uniform constant $C<\infty$ such that $\lvert X^{N}\rvert\leq CY^{N}$. Note that we shall abuse notation by using an equals sign, rather than an inclusion sign.

We now define the notion of process sequence class. Given sequences of processes $\{(X^N_t)_{t\geq 0}:N\in\mathbb{N}\}$ and $\{(Y^N_t)_{t\geq 0}:N\in\mathbb{N}\}$, we say that:
\begin{enumerate}
\item\label{enum:process sequence classes MG controlled QV}
$X^N_t=\calO^{\MG}_t(Y^N)$ if for all $N\geq N_0$ (for some $N_0<\infty$) and for some $C<\infty$, $X^N_t$ is a martingale whose quadratic variation is such that
\begin{equation}\label{eq:equation for OMG notation}
[X^N]_t-\int_0^tCY^N_sds\quad\text{is a supermartingale.}
\end{equation}
\item
$X^N_t=\calO^{\FV}_t(Y^N)$ if for all $N\geq N_0$ (for some $N_0<\infty$) and for some $C<\infty$, $X^N_t$ is a finite variation process whose total variation is such that
\begin{equation}\label{eq:equation for OFV notation}
V_t(X^N)-\int_0^tCY^N_sds\quad\text{is a supermartingale.}
\end{equation}
\item\label{enum:process seqeunce classes FV controlled jumps}
$X^N_t=\calO^{\Delta}_t(Y^N)$ if for all $N\geq N_0$ (for some $N_0<\infty$) and for some $C<\infty$, $X^N_t$ is such that
\begin{equation}\label{eq:equation for ODELTA notation}
\lvert \Delta X^N_t\rvert\leq CY^N_{t-}\quad\text{for all }0\leq t<\infty,\quad \text{almost surely.}
\end{equation}
\item $X_t^N=\calO^{\Cts}_t$ or $X^N_t=\calO^{\Lip}_t$ if for all $N\geq N_0$ (for some $N_0<\infty$), $X_t^N$ has continuous (respectively Lipschitz) sample paths, almost surely.
\end{enumerate}
We refer to $\calO^{\MG}_t(Y^N),\calO^{\FV}_t(Y^N)$ and $\calO^{\FV}_t(Y^N)$ for $((Y^N_t)_{0\leq t<\infty}:N\in\mathbb{N})$ a given sequence of processes, and $\calO^{\Cts}_t$ and $\calO_t^{\Lip}$, as process sequence classes. Note that as with sequences of random variables, we abuse notation by using an equals sign rather than an inclusion sign.

Suppose that we have constants $r_N>0$ ($N\in\mathbb{N}$). For a given sequence of processes $Y^N$, write $Z^N_s:=Y^N_{r_Ns}$. The statements $X^N_t=\calO^{\MG}_t(Y^N_{r_N\cdot})$, $X^N_t=\calO^{\FV}_t(Y^N_{r_N\cdot})$ and $X^N_t=\calO^{\Delta}_t(Y^N_{r_N\cdot})$ should be interpreted as the statements $X^N_t=\calO^{\MG}_t(Z^N_{\cdot})$, $X^N_t=\calO^{\FV}_t(Z^N_{\cdot})$ and $X^N_t=\calO^{\Delta}_t(Z^N_{\cdot})$ respectively.

Given an index set $\mathbb{A}$, a family of sequences of processes $\{((X^{N,\alpha}_t)_{0\leq t<\infty})_{N=1}^{\infty}:\alpha\in \mathbb{A}\}$, and a family of process sequence classes $\{\calA^{N,\alpha}_t:\alpha\in\mathbb{A}\}$, we say that $X^{N,\alpha}_t=\calA^{N,\alpha}_t$ uniformly if the constants $C^{\alpha}$ and $N^{\alpha}_0$ used to define $X^{N,\alpha}_t=\calA^{N,\alpha}_t$ as in \ref{enum:process sequence classes MG controlled QV} - \ref{enum:process seqeunce classes FV controlled jumps} can be chosen uniformly in $\alpha\in\mathbb{A}$.

It will be useful to take the sum and intersection of process sequence classes and specific sequences of processes. To be more precise, for any process sequence classes $\calA^N_t$ and $\calB^N_t$, and the sequence of processes $F^N_t$, we say that:
\begin{enumerate}
\item
$X^N_t=\calA^N_t\cap\calB^N_t$ if $X^N_t=\calA^N_t$ and $X^N_t=\calB^N_t$;
\item
$X^N_t=F^N_t+\calA^N_t$ if there exists a sequence of processes $G^N_t$ such that $G^N_t=\calA^N_t$ and $X^N_t=F^N_t+G^N_t$;
\item
$X^N_t=\calA^N_t+\calB^N_t$ if there exists sequences of processes $G^N_t$ and $H^N_t$ such that $G^N_t=\calA^N_t$, $H^N_t=\calB^N_t$ and $X^N_t=G^N_t+H^N_t$;
\item
$dX_t=dF^N_t+d\calA^N_t+d\calB^N_t$ if there exists sequences of processes $G^N_t$ and $H^N_t$ such that $G^N_t=\calA^N_t$, $H^N_t=\calB^N_t$ and $dX^N_t=dF^N_t+dG^N_t+dH^N_t$.
\end{enumerate}
For example, if $X^N_t=\calO_t^{\MG}(1)+\calO_t^{\FV}(\frac{1}{N})\cap\calO^{\Delta}_t(\frac{1}{N^2})$ then there exists $G^N_t$ and $H^N_t$ such that $X^N_t=G^N_t+H^N_t$ whereby $G^N_t=\calO_t^{\MG}(1)$ and $H^N_t=\calO_t^{\FV}(\frac{1}{N})\cap\calO^{\Delta}_t(\frac{1}{N^2})$. Thus $X^N_t=\calO_t^{\MG}(1)+\calO_t^{\FV}(\frac{1}{N})\cap\calO^{\Delta}_t(\frac{1}{N^2})$ means that for some $0<C<\infty$ there exists for all $N$ large enough martingales $G^{N}_t$ and finite-variation processes $H^{N}_t$ such that:

\begin{align*}
X^N_t=G^N_t+H^N_t,\\
\quad [G^N]_t-Ct\quad\text{is a supermartingale since}\quad G^N_t=\calO^{\MG}_t(1),\\
V_t(H^N)-\frac{t}{N}\quad\text{is a supermartingale since}\quad H^N=\calO^{\FV}_t(\frac{1}{N}),\\
\text{and}\quad  \lvert \Delta Z^N_t\rvert\leq \frac{C}{N^2}\quad\text{for all}\quad 0\leq t<\infty,\quad \text{almost surely, since}\quad H^N_t= \calO^{\Delta}_t(\frac{1}{N^2}).
\end{align*}

\section{Characterisation of $\calY^N_t$}\label{section: characterisation of Y}

In the proof of Theorem \ref{theo:Convergence to FV diffusion}, we will obtain a scaling limit for the tilted empirical measure of the colours on a fast timescale, $(\calY^N_{Nt})_{t\geq 0}$. This will rely on various calculations characterising its drift and quadratic variation. To avoid obscuring the proof of Theorem \ref{theo:Convergence to FV diffusion} with calculations, we perform these calculations here.

In this section, we write $(\Omega,\calG,(\calG_t)_{t\geq 0},\Pm)$ for the underlying filtered probability space.

\begin{rmk}
In the present section, all statements as to processes belonging to various process sequence classes should be interpreted as being uniform over all choices $\E,\F\in\calB(\K)$ (or over all sequences of $\calG_0$-measurable random $\E^N,\F^N\in\calB(\K)$, in the case of Part \ref{enum:Thm 8.2 true for random sets} of Theorem \ref{theo:characterisation of Y}). 
\end{rmk}

We recall that
\[
\begin{split}
\calY^N_t:=\frac{\frac{1}{N}\sum_{i=1}^N\phi(X^{i}_t)\delta_{\eta^{i}_t}}{\frac{1}{N}\sum_{i=1}^N\phi(X^{i}_t)}.
\end{split}
\]
In this section, we prove the following theorem.
\begin{theo}\label{theo:characterisation of Y}
We have the following, uniformly over all choices of $\calE,\calF\in\calB(\K)$:
\begin{enumerate}
\item\label{enum:bound on cov of YE YF}
The covariation $[Y^{N,\E},Y^{N,\F}]_t$ is such that $[Y^{N,\E},Y^{N,\F}]_t=\calO^{\FV}_t(\frac{Y^{N,\E}Y^{N,\F}}{N})$ for disjoint $\E, \F\in\calB(\K)$.
\item\label{enum:Y in terms of K and extra terms}
There exists martingales $\calK^{N,\E}_t$ for $\E\in\calB(\K)$ such that $Y^{N,\E}_t$ satisfies
\begin{equation}\label{eq:Y in terms of K and extra terms}
\begin{split}
Y^{N,\E}_t=Y^{N,\E}_0\\
+\int_0^t\Big[-\frac{1}{(N-1)Q_s^N}\langle m^{N,\E}_s-Y^{N,\E}_s m^N_s,\kappa\phi\rangle
-\frac{1}{NQ^N_s}\langle Y^{N,\E}_sm^N_s-m^{N,\E}_s,\kappa\phi\rangle\\+\frac{1}{N(Q^N_s)^2}\big(Y^{N,\E}_s\Lambda^{N}_s-\Lambda^{N,\E}_s\big)
\Big]ds+\calK^{N,\E}_t
+\calO_t^{\MG}\big(\frac{Y^{N,\E}}{N^3}\big)+\calO_t^{\FV}\big(\frac{Y^{N,\E}}{N^2}\big)\cap\calO^{\Delta}_t\big(\frac{1}{N^3}\big)
\end{split}
\end{equation}
for $\E\in\calB(\K)$ and such that
\begin{equation}\label{eq:covariation of KE and KF}
\begin{split}
[\calK^{N,\E},\calK^{N,\F}]_t=\int_0^t\frac{1}{N(Q^N_s)^2}\Big[
\Lambda^{N,\E\cap \F}_s
-Y^{N,\E}_s \Lambda^{N,\F}_s
-Y^{N,\F}_s \Lambda^{N,\E}_s +Y^{N,\E}_sY^{N,\F}_s \Lambda^{N}_s \Big]ds
\\+\calO_t^{\MG}\big(\frac{Y^{\E}Y^{\F}+Y^{\E\cap \F}}{N^3}\big)
+\calO_t^{\FV}\Big(\frac{Y^{\E}Y^{\F}+Y^{\E\cap \F}}{N^2}\Big)\cap\calO^{\Cts}_t\quad\text{for all}\quad \E,\F\in\calB(\K).
\end{split}
\end{equation}
\item\label{enum:Y O FV plus U MG}
Furthermore $Y^{N,\E}_t$ satisfies
\begin{equation}\label{eq:Y O FV plus U MG}
Y^{N,\E}_t=\Big[\calO^{\FV}_t\big(\frac{Y^{N,\E}}{N}\big)+\calO^{\MG}_t\big(\frac{Y^{N,\E}}{N}\big)\Big]\cap\calO^{\Delta}_t(\frac{1}{N}).
\end{equation}
\item
Parts \ref{enum:bound on cov of YE YF}- \ref{enum:Y O FV plus U MG} remain true if $\E$ and $\F$ are replaced with a sequence of $\sigma_0$-measurable random sets $\calE^N$ and $\calF^N$.\label{enum:Thm 8.2 true for random sets}
\end{enumerate}
\end{theo}

\subsection{Proof of Theorem \ref{theo:characterisation of Y}}

We firstly introduce some definitions. We define
\begin{equation}\label{eq:defin function F}
F(\vec{r})=\frac{p}{q}\quad \text{for}\quad\vec{r}=\begin{pmatrix}
p\\
q
\end{pmatrix}\in \Rm_{>0}^2.
\end{equation}
We write $H=H(\vec{r})$ for the Hessian and calculate
\begin{equation}
\begin{split}
\nabla F(\vec{r})=\begin{pmatrix}
\frac{1}{q} \\ -\frac{p}{q^2}
\end{pmatrix}\quad\text{and}\quad 
H(\vec{r})=\begin{pmatrix}
0 & -\frac{1}{q^2}\\
-\frac{1}{q^2} & 2\frac{p}{q^3}
\end{pmatrix}\quad \text{for}\quad\vec{r}=\begin{pmatrix}
p\\
q
\end{pmatrix}\in \Rm_{>0}^2.
\end{split}
\label{eq:formula for nabla F and Hessian}
\end{equation}
We have the key property
\begin{equation}
\nabla F\cdot \vec{r}=0\quad\text{and}\quad \vec{r}\cdot H(F)\vec{r}=0\quad \text{for}\quad\vec{r}=\begin{pmatrix}
p\\
q
\end{pmatrix}\in \Rm_{>0}^2.
\label{eq:formula for nabla F dot r}
\end{equation}
We further define
\begin{equation}\label{eq:defin vec R}
\begin{split}
\vec{R}^{N,\E}_t:=\begin{pmatrix}
P^{N,\E}_t\\
Q^{N}_t
\end{pmatrix}\quad\text{so that}\quad Y^{N,\E}_t=F(\vec{R}^{N,\E}_t).
\end{split}
\end{equation}
We shall firstly establish the following proposition, which characterises $P^{N,E}$.
\begin{prop}\label{prop:proposition characterising pk}
We have for all $\E\in\calB(\K)$ that
\begin{equation}
dP^{N,\E}_t=P^{N,\E}_t\big(-\lambda +\frac{N}{N-1}\langle m^{N}_t,\kappa\rangle\big)dt -\frac{1}{N-1}\langle m^{N,\E}_t,\kappa\phi\rangle  dt+dM^{N,\E}_t,
\label{eq:sde for pk}
\end{equation}
whereby $M^{N,\E}$ are martingales which satisfy for all $\E,\F\in\calB(\K)$,
\begin{equation}
\begin{split}
[M^{N,\E},M^{N,\F}]_t=-\frac{1}{N}\int_0^tP^{N,\E}_s\langle m^{N,\F}_s,\kappa\phi\rangle+P^{N,\F}_s\langle m^{N,\E}_s,\kappa\phi\rangle ds\\
+\frac{1}{N}\int_0^t\Lambda^{N,\E\cap\F}_s ds
+\calO_t^{\MG}\big(\frac{P^{\E}P^{\F}+P^{\E\cap \F}}{N^3}\big)+\calO_t^{\FV}\big(\frac{P^{\E\cap \F}}{N^2}\big)\cap\calO^{\Lip}_t.
\end{split}
\label{eq:covariation for ME MF}
\end{equation}
We write $M^N_t$ for $M^{N,\K}_t$. 
\end{prop}

We will then establish Part \ref{enum:bound on cov of YE YF} of Theorem \ref{theo:characterisation of Y}, followed by the following version of Ito's lemma.

\begin{lem}[Ito's Lemma]\label{lem:Ito for F}
We have
\begin{equation}
\begin{split}
Y^{N,\E}_t=Y^{N,\E}_0+\int_0^t\nabla F(R^{N,\E}_{s-})\cdot dR^{N,\E}_s+\frac{1}{2}dR^{N,\E}_s\cdot H(R^{N,\E}_{s-})dR^{N,\E}_s+\calO^{\FV}_t(\frac{Y^{N,\E}}{N^2})\cap\calO^{\Delta}_t(\frac{1}{N^3}).
\end{split}
\end{equation}
\end{lem}

Combining \eqref{eq:formula for nabla F and Hessian} with Proposition \ref{prop:proposition characterising pk} and Lemma \ref{lem:Ito for F} we obtain \eqref{eq:Y in terms of K and extra terms} by calculation, whereby
\begin{equation}\label{eq:formula for KNE}
\calK^{N,\E}_t:=\int_0^t\frac{1}{Q^N_{s-}}\big(dM^{N,\E}_s-Y^{N,\E}_{s-}dM^{N}_s\big).
\end{equation}
We then obtain \eqref{eq:covariation of KE and KF} from \eqref{eq:covariation for ME MF} and \eqref{eq:formula for KNE}. 

Using the boundedness of $\phi$ and the fact that there are no simultaneous killing events, we obtain \eqref{eq:Y O FV plus U MG} from \eqref{eq:Y in terms of K and extra terms}.

Since in parts \ref{enum:bound on cov of YE YF}-\ref{enum:Y O FV plus U MG} of Theorem \ref{theo:characterisation of Y} the statements of processes belonging to various process sequence classes are uniform over all choices $\E,\F\in\calB(\K)$, Part \ref{enum:Thm 8.2 true for random sets} is immediate.

It remains to prove Proposition \ref{prop:proposition characterising pk}, Part \ref{enum:bound on cov of YE YF} of Theorem \ref{theo:characterisation of Y}, and Lemma \ref{lem:Ito for F}.

\subsubsection{Proof of Proposition \ref{prop:proposition characterising pk}}

Since $N$ is fixed throughout this proof, we neglect the $N$ superscript for the sake of notation, where it would not create confusion. We recall that $\tau^i_n$ represents the $n^{\text{th}}$ killing time of particle $(X^i,\eta^i)$ ($\tau^i_0:=0$), $\tau_n$ is the $n^{\text{th}}$ killing time of any particle ($\tau_0:=0$), and $J^N_t:=\frac{1}{N}\sup\{n:\tau_n\leq t\}$ is the number of killing times up to time $t$, renormalised by $N$. 

We denote 
\[
\phi^{\E}(x,\eta):=\phi(x)\Ind(\eta\in \E),\quad \E\in\calB(\K).
\]
We define for $\E\in\calB(\K)$ the processes
\begin{equation}
\begin{split}
A^{\E}_t=\frac{1}{N}\sum_{i=1}^N\sum_{\tau^i_n\leq t}\phi^{\E}(X^i_{\tau^i_n},\eta^i_{\tau^i_n})-\frac{N}{N-1}\int_0^tP^{\E}_s\langle m^N_s,\kappa\rangle ds+\frac{1}{N(N-1)}\sum_{i=1}^N\sum_{\tau^i_n\leq t}\phi^{\E}(X^i_{\tau^i_n-},\eta^i_{\tau^i_n-}), \\
B^{\E}_t=\langle m^{N,\E}_t,\phi\rangle-\langle m^{N,\E}_0,\phi\rangle
-\frac{1}{N}\sum_{i=1}^N\sum_{\tau^i_n\leq t}\phi^{\E}(X^i_{\tau^i_n},\eta^i_{\tau^i_n})+\lambda\int_0^t\langle m^{N,\E}_s,\phi\rangle ds,\\
\text{and}\quad C^{\E}_t=\frac{1}{N}\sum_{i=1}^N\sum_{\tau^i_n\leq t}\phi^{\E}(X^i_{\tau^i_n-},\eta^i_{\tau^i_n-})-\int_0^t\langle m^{N,\E}_s,\kappa\phi\rangle ds.
\end{split}
\label{eq:defin of Ak Bk Ck}
\end{equation}
We will firstly establish that $A^{\E}_t$, $B^{\E}_t$ and $C^{\E}_t$ are martingales, so that
\begin{equation}
\begin{split}
M^{\E}_t=A^{\E}_t+B^{\E}_t-\frac{1}{N-1}C^{\E}_t
=-\frac{N}{N-1}\int_0^tP^{N,\E}_s\langle m^N_s,\kappa \rangle ds
\\+\langle m^{N,\E}_t,\phi\rangle-\langle m^{N,\E}_0,\phi\rangle
+\lambda\int_0^t\langle m^{N,\E}_s,\phi\rangle ds
+\frac{1}{N-1}\int_0^t\langle m^{N,\E}_s,\kappa\phi\rangle ds
\end{split}
\label{eq:formula for martingale Mk}
\end{equation}
is a martingale. We therefore have \eqref{eq:sde for pk}. We will then establish \eqref{eq:covariation for ME MF} by establishing it for $\E=\F$ and for $\E,\F$ disjoint.

\underline{$A^{\E}_t$ is a martingale}

We have that if particle $X^i$ dies at time $t$, then each $j\neq i$ is selected with probability $\frac{1}{N-1}$, so that the expected value of $\phi^{\E}(X^i_t,\eta^i_t)$ is given by:
\[
\frac{1}{N-1}\sum_{j\neq i}\phi^{\E}(X^j_{t-},\eta^j_{t-})=\frac{N}{N-1}\times \big[P^{\E}_{t-} - \frac{1}{N}\phi^{\E}(X^i_{t-},\eta^i_{t-}) \big].
\]
Therefore summing over $\tau_{n}^i\leq t$, we see that
\[
\begin{split}
\frac{1}{N}\sum_{i=1}^N\sum_{\tau^i_n\leq t}\phi^{\E}(X^i_{\tau^i_n},\eta^i_{\tau^i_n})-\frac{N}{N-1}\int_0^tP^{\E}_{s-}dJ^N_s+\frac{1}{N(N-1)}\sum_{i=1}^N\sum_{\tau^i_n\leq t}\phi^{\E}(X^i_{\tau^i_n-},\eta^i_{\tau^i_n-})
\end{split}
\]
is a martingale. We finally note that $J^N_t-\int_0^t\langle m^N_s,\kappa\rangle ds$ is a martingale, so that $A^{\E}_t$ is a martingale.

\underline{$B^{\E}_t$ is a martingale}

Since $L\phi=-\lambda\phi$, we see that the following is a martingale,
\[
\begin{split}
B_t^{{\E},i,n}:=\Ind(\tau^i_n\leq t<\tau^i_{n+1})\phi^{\E}(X^i_t,\eta^i_{t})-\phi^{\E}(X^i_{\tau^i_n},\eta^i_{\tau^i_n})\Ind(t \geq\tau^i_{n})
\\+\lambda\int_0^t\Ind(\tau^i_n\leq s<\tau^i_{n+1})\phi^{\E}(X^i_{s},\eta^i_{s})ds.
\end{split}
\]
Therefore $\sum_{n<n_0}B_t^{{\E},i,n}$ is a martingale for all $n_0<\infty$. Since $\sum_{n<n_0}\lvert B^{\E,i,n}_t\rvert\leq C(1+t+NJ^N_t)$ for all $n_0<\infty$, for some $C<\infty$, $(\sum_{n<\infty}B_t^{{\E},i,n})_{0\leq t<\infty}$ is a martingale. Therefore
\[
B_t^{\E}=\frac{1}{N}\sum_{i=1}^N\sum_{n<\infty}B_t^{{\E},i,n}\quad\text{is a martingale.}
\]

\underline{$C^{\E}_t$ is a martingale}

We have that
\[
C_t^{{\E},i,n}:=\Ind(t\geq \tau^i_{n+1})\phi^{\E}(X^i_{\tau^i_{n+1}-},\eta^i_{\tau^i_{n+1}-})-\int_0^{t}\Ind(\tau^i_n\leq s<\tau^i_{n+1})\kappa(X^i_s)\phi^{\E}(X^i_s,\eta^i_s)ds
\]
is a martingale. Since for some $C<\infty$, $\sum_{n<n_0}\lvert C^{\E,i,n}_t\rvert\leq C(1+t+NJ^N_t)$ for all $n_0<\infty$, $\sum_{n<\infty}C^{\E,i,n}$ is a martingale. Therefore 
\[
C^{\E}_t=\frac{1}{N}\sum_{i=1}^N\sum_{n<\infty}C_t^{{\E},i,n}\quad\text{is a martingale.}
\]

\underline{The Quadratic Variation of $M^{\E}$}

We observe from \eqref{eq:formula for martingale Mk} that $M^{N,\E}_t-\langle m^{N,\E}_t,\phi\rangle$ is a Lipschitz process. By considering seperately the quadartic variation of the continuous motion between jumps and at the jumps, it follows that
\[
\begin{split}
[M^{N,\E},M^{N,\F}]_t=\frac{1}{N}\int_0^t\langle \Gamma_0(\phi),m^{N,\E\cap \F}_s\rangle ds 
+H^{N,\E,\F}_t,
\end{split}
\]
whereby we define 
\[
H^{N,\E,\F}_t:=\frac{1}{N^2}\sum_{i=1}^N\sum_{\tau_n^i\leq t}[\phi^{\E}(X^i_{\tau_n^i},\eta^i_{\tau_n^i})-\phi^{\E}(X^i_{\tau_n^i-},\eta^i_{\tau_n^i-})][\phi^{\F}(X^i_{\tau_n^i},\eta^i_{\tau_n^i})-\phi^{\F}(X^i_{\tau_n^i-},\eta^i_{\tau_n^i-})].
\]
To characterise $H^{N,\E,\F}_t$ we split into the cases that $\E$ and $\F$ are disjoint, and that $\E=\F$.

\underline{$\E=\F$}

We write $H^{N,\E}_t$ for $H^{N,\E,\E}_t$. At time $\tau^i_n-$, the expected values of $\phi^{\E}(X^i_{\tau^i_n},\eta^i_{\tau^i_n})$ and $\phi^{\E}(X^i_{\tau^i_n},\eta^i_{\tau^i_n})^2$ are
\[
P_{\tau^i_n-}^{\E}+\calO\Big(\frac{P_{\tau^i_n-}^{\E}+\phi^{\E}(X^i_{\tau^i_n-},\eta^i_{\tau^i_n-})}{N}\Big)\quad\text{and}\quad \langle m^{N,\E}_{\tau^i_n-},\phi^2 \rangle +\calO\Big(\frac{P_{\tau^i_n-}^{\E}+\phi^{\E}(X^i_{\tau^i_n-},\eta^i_{\tau^i_n-})}{N}\Big),
\]
respectively. Therefore the expected value of $\big[\phi^{\E}(X^i_{\tau^i_n},\eta^i_{\tau^i_n})-\phi^{\E}(X^i_{\tau^i_n-},\eta^i_{\tau^i_n-})\big]^2$ at time $\tau^i_n-$ is
\[
\begin{split}
\langle m^{N,\E}_{\tau^i_n-},\phi^2\rangle-2P^{\E}_{\tau^i_n-}\phi^{\E}(X^{i}_{\tau^i_n-},\eta^{i}_{\tau^i_n-})+(\phi^{\E}(X^{i}_{\tau^i_n-},\eta^{i}_{\tau^i_n-}))^2 +\calO\Big(\frac{P_{\tau^i_n-}^{\E}+\phi^{\E}(X^i_{\tau^i_n-},\eta^i_{\tau^i_n-})}{N}\Big).
\end{split}
\]
Then using the killing rate to characterise the rate at which killing events happen, we see that
\[
\begin{split}
H^{N,\E}_t-\frac{1}{N}\int_0^t\langle m^N_s,\kappa\rangle \langle m^{N,\E}_s,\phi^2\rangle+\langle m^{N,\E}_s,\kappa \phi^2\rangle-2P^{N,\E}_s\langle m^{N,\E}_s,\kappa\phi\rangle ds\\
=\calO^{\FV}_t\big(\frac{P^{N,\E}}{N^2}\big)\cap\calO^{\Lip}_t+\tilde{M}^{N,\E}_t,
\end{split}
\]
for some martingale $\tilde{M}^{N,\E}_t$. It is straightforward to then see that for all $N$ sufficiently large (which does not depend upon $\E$), $[\tilde{M}^{N,\E}]_t =[H^{N,\E}]_t=\calO^{\FV}_t(\frac{P^{N,\E}}{N^3})$. We therefore obtain \eqref{eq:covariation for ME MF} in the case that $\E=\F$.

\underline{$\E\cap \F=\emptyset$}

Since $\phi^{\E}(x,\eta)\phi^{\F}(x,\eta)=0$ for all $(x,\eta)\in \bar D\times \K$, we have that 
\[
\begin{split}
[\phi^{\E}(X^i_{\tau_n^i},\eta^i_{\tau_n^i})-\phi^{\E}(X^i_{\tau_n^i-},\eta^i_{\tau_n^i-})][\phi^{\F}(X^i_{\tau_n^i},\eta^i_{\tau_n^i})-\phi^{\F}(X^i_{\tau_n^i-},\eta^i_{\tau_n^i-})]\\
=-\phi^{\E}(X^i_{\tau_n^i-},\eta^i_{\tau_n^i-})\phi^{\F}(X^i_{\tau_n^i},\eta^i_{\tau_n^i})-\phi^{\E}(X^i_{\tau_n^i},\eta^i_{\tau_n^i})\phi^{\F}(X^i_{\tau_n^i-},\eta^i_{\tau_n^i-}).
\end{split}
\]
The expected value of this at time $\tau_n^i-$ is then
\[
-\frac{N}{N-1}[\phi^{\E}(X^i_{\tau_n^i-},\eta^i_{\tau_n^i-})P^{N,\F}_{\tau_n^i-}+\phi^{\F}(X^i_{\tau_n^i-},\eta^i_{\tau_n^i-})P^{N,\E}_{\tau_n^i-}].
\]
It follows that
\[
H_t^{N,\E,\F}+\frac{1}{N-1}\int_0^tP^{N,\E}_s\langle m^{N,\F}_s,\kappa \phi\rangle+ P^{N,\F}_s\langle m^{N,\E}_s,\kappa \phi\rangle ds\quad\text{is a martingale,}
\]
which we denote as $\tilde{M}^{N,\E,\F}_t$. It is then straightforward to see that $[\tilde{M}^{N,\E,\F}]_t=[H^{N,\E,\F}]_t=\calO^{\FV}_t(P^{N,\E}P^{N,\F})$. We have therefore obtained \eqref{eq:covariation for ME MF} with $\E\cap \F=\emptyset$.

Having established \eqref{eq:covariation for ME MF} both in the case that $\E=\F$ and the case that $\E\cap \F=\emptyset$, the case of arbitrary $\E,\F$ follows by linearity.

\qed

\subsubsection{Proof of Part \ref{enum:bound on cov of YE YF} of Theorem \ref{theo:characterisation of Y}}\label{subsection:proof of QV of YE YF disjoint part of calculations theorem}
We recall that $F$, $H$ and $\vec{R}$ were defined in \eqref{eq:defin function F}, \eqref{eq:formula for nabla F and Hessian} and \eqref{eq:defin vec R} as
\[
\begin{split}
F(\vec{r})=\frac{p}{q},\quad H(\vec{r})=\begin{pmatrix}
0 & -\frac{1}{q^2}\\
-\frac{1}{q^2} & 2\frac{p}{q^3}
\end{pmatrix}\quad \text{for}\quad\vec{r}=\begin{pmatrix}
p\\
q
\end{pmatrix}\in \Rm_{>0}^2,\\
\vec{R}^{N,\E}_t:=\begin{pmatrix}
P^{N,\E}_t\\
Q^{N}_t
\end{pmatrix}\quad\text{so that}\quad Y^{N,\E}_t=F(\vec{R}^{N,\E}_t).
\end{split}
\]

We decompose
\[
\vec{R}^{N,\E}_t=\vec{R}^{N,\E,C}_t+\vec{R}^{N,\E,J}_t\quad \text{and} \quad Y^{N,\E}_t=F(\vec{R}^{N,\E}_t)=Y^{N,\E,C}_t+Y^{N,\E,J}_t
\]
for $\E\in\calB(\K)$, whereby
\[
Y^{N,\E,J}_t=\sum_{s\leq t}\Delta Y^{N,\E,J}_s\quad\text{and}\quad \vec{R}^{N,\E,J}_t:=\sum_{s\leq t}\Delta\vec{R}^{N,\E}_s=\begin{pmatrix}
P^{\E,J}_t\\
Q^J_t
\end{pmatrix}.
\]
Then by Ito's lemma we have
\begin{equation}\label{eq:Ito for YE cts part}
dY^{N,\E,C}_t=\nabla F(\vec{R}^{N,\E}_t)\cdot d\vec{R}^{N,\E,C}_t+\frac{1}{2}d\vec{R}^{N,\E,C}_t\cdot H(F)(\vec{R}^{N,\E}_t)d\vec{R}^{N,\E,C}_t.
\end{equation}

We can therefore calculate
\begin{equation}\label{eq:cov YEC YFC calc}
\begin{split}
d[Y^{N,\E,C},Y^{N,\F,C}]_t=\frac{1}{(Q^N_t)^2}\big(dP^{N,\E,C}_t-Y^{N,\E}_{t}dQ^{N,C}_t\big)\cdot\big(dP^{N,\F,C}_t-Y^{N,\F}_{t}dQ^{N,C}_t\big).
\end{split}
\end{equation}
Proposition \ref{prop:proposition characterising pk} implies that
\begin{equation}\label{eq:cov PEC PFC}
\begin{split}
d[P^{N,\E,C},P^{N,\F,C}]_t=\calO^{\FV}_t(\frac{Y^{N,\E}Y^{N,\F}}{N}+\frac{Y^{N,\E\cap\F}}{N^2})
\end{split}
\end{equation}
for all $\E,\F\in\calB(\K)$. Combining \eqref{eq:cov YEC YFC calc} with \eqref{eq:cov PEC PFC} we have
\begin{equation}\label{eq:cov YEC YFC}
d[Y^{N,\E,C},Y^{N,\F,C}]_t=\calO^{\FV}_t(\frac{Y^{N,\E}Y^{N,\F}}{N})
\end{equation}
for all $\E,\F\in\calB(\K)$ disjoint. We also have that
\[
[Y^{N,\E,J},Y^{N,\F,J}]_t=\sum_{\tau^i_n\leq t}\Delta Y^{N,\E}_{\tau^i_n}\Delta Y^{N,\F}_{\tau^i_n}.
\]
Since $Q^N_t$ is bounded below away from $0$, by bounding the partial derivatives of $F$ we can calculate for all $\E,\F\in\calB(\K)$ disjoint that
\[
\begin{split}
\lvert\Delta Y^{N,\E}_{\tau^i_n}\Delta Y^{N,\F}_{\tau^i_n}\rvert=\calO\Big(\lvert \Delta P^{N,\E}_{\tau^i_n}\Delta P^{N,\F}_{\tau^i_n}\rvert+\frac{P^{N,\E}_{\tau^i_n-}\lvert \Delta P^{N,\F}_{\tau^i_n}\rvert+P^{N,\F}_{\tau^i_n-}\lvert \Delta P^{N,\E}_{\tau^i_n}\rvert}{N}+\frac{P^{N,\E}_{\tau^i_n-}P^{N,\F}_{\tau^i_n-}}{N^2}\Big)\\
=\calO\Big(\frac{\lvert\phi^{\E}(X^i_{\tau^i_n},\eta^i_{\tau^i_n})\phi^{\F}(X^i_{\tau^i_n-},\eta^i_{\tau^i_n-})\rvert}{N^2}\Big)+\calO\Big(\frac{\lvert\phi^{\F}(X^i_{\tau^i_n},\eta^i_{\tau^i_n})\phi^{\E}(X^i_{\tau^i_n-},\eta_{\tau^i_n-})\rvert}{N^2}\Big)+\calO\Big(\frac{P^{N,\E}_{\tau^i_n-}P^{N,\F}_{\tau^i_n-}}{N^2}\Big)\\+\calO\Big(\frac{\lvert\phi^{\E}(X^i_{\tau^i_n},\eta^i_{\tau^i_n})-\phi^{\E}(X^i_{\tau^i_n-},\eta^i_{\tau^i_n-})\rvert}{N^2}P^{N,\F}_{\tau^i_n-}\Big)
+\calO\Big(\frac{\lvert\phi^{\F}(X^i_{\tau^i_n},\eta^i_{\tau^i_n})-\phi^{\F}(X^i_{\tau^i_n-},\eta^i_{\tau^i_n-})\rvert}{N^2}P^{N,\E}_{\tau^i_n-}\Big).
\end{split}
\]
Since $\kappa$ is bounded, it is straightforward to then see that
\[
\sum_{\tau^i_n\leq t}\lvert\Delta Y^{N,\E}_{\tau^i_n}\Delta Y^{N,\F}_{\tau^i_n}\rvert=\calO^{\FV}_t(\frac{P^{N,\E}P^{N,\F}}{N})=\calO^{\FV}_t(\frac{Y^{N,\E}Y^{N,\F}}{N}),
\]
so that
\begin{equation}\label{eq:cov YEJ YFJ}
[Y^{N,\E,J},Y^{N,\F,J}]_t=\sum_{\tau^i_n\leq t}\lvert\Delta Y^{N,\E}_{\tau^i_n}\Delta Y^{N,\F}_{\tau^i_n}\rvert=\calO^{\FV}_t(\frac{P^{N,\E}P^{N,\F}}{N})=\calO^{\FV}_t(\frac{Y^{N,\E}Y^{N,\F}}{N})
\end{equation}
for all $\E,\F\in\calB(\K)$ disjoint. Combining \eqref{eq:cov YEC YFC} with \eqref{eq:cov YEJ YFJ} we have Part \ref{enum:bound on cov of YE YF} of Theorem \ref{theo:characterisation of Y}.

\qed
\subsubsection{Proof of Lemma \ref{lem:Ito for F}}

We take $0\leq t_0\leq t_1\leq t$ and write
\[
Y^{N,\E,J}_{t_1}-Y^{N,\E,J}_{t_0}=\sum_{t_0<s\leq t_1}\big(Y^{N,\E}_{s}-Y^{N,\E}_{s-}\big).
\]

We may calculate
\begin{equation}\label{eq:3rd derivs for Taylor}
\frac{\partial^3 F}{\partial p^3}=\frac{\partial^3 F}{\partial^2 p\partial q}=0,\quad  \frac{\partial^3 F}{\partial p\partial^2 q}=\frac{2}{q^3},\quad \frac{\partial^3 F}{\partial^3 q}=\frac{-6p}{q^4}.
\end{equation}
Thus by Taylor's theorem, \eqref{eq:3rd derivs for Taylor}, the fact that almost surely there are no simultaneous killing events, and the fact that $Q^{N}_t$ is bounded above and below away from $0$, we have
\[
\begin{split}
\Big\lvert Y^{N,\E,J}_{s}-Y^{N,\E,J}_{s-}-\nabla F(\vec{R}^{N}_{s-})\cdot (\vec{R}^{N,J}_{s}-\vec{R}^{N,J}_{s-})\\-\frac{1}{2}(\vec{R}^{N,J}_{s}-\vec{R}^{N,J}_{s-})\cdot H(F)(\vec{R}^{N}_{s-})( \vec{R}^{N,J}_{s}-\vec{R}^{N,J}_{s-})\Big\rvert\\
=\calO\Big(P^{N,\E}_{s-}\lvert\Delta Q^N_{s-}\rvert^3+\lvert\Delta P^{N,\E}_{s-}\rvert\lvert\Delta Q^{N}_{s-}\rvert^2\Big).
\end{split}
\]
Since $\kappa$ and $\phi$ are bounded, it is straightforward to then see that
\begin{equation}
\begin{split}
Y^{N,\E,J}_t-Y^{N,\E,J}_0-\int_0^t\nabla F(\vec{R}^N_{s-})\cdot d\vec{R}^{N,J}_s-\frac{1}{2}\int_0^td\vec{R}^{N,J}_s\cdot H(F)(\vec{R}^N_{s-})d\vec{R}^{N,J}_s\\=\calO_t^{\FV}\Big(\frac{Y^{N,\E}}{N^3}\Big)\cap\calO^{\Delta}(\frac{1}{N^3}).
\end{split}
\end{equation}
Combining this with \eqref{eq:Ito for YE cts part} we have Lemma \ref{lem:Ito for F}.
\qed

This completes the proof of Theorem \ref{theo:characterisation of Y}.
\qed

\section{Proof of Theorem \ref{theo:Convergence to FV diffusion}}\label{section:proof of convergence to FV diffusion}

With the calculations of Section \ref{section: characterisation of Y} in hand, we now prove Theorem \ref{theo:Convergence to FV diffusion}. We shall make use of the Wasserstein distance $\Wah$ and the weak atomic metric $\Wat$, which are defined in Appendix \ref{appendix:spaces of measures}.

We shall firstly prove the following proposition.
\begin{prop}
\label{prop:conv of dist for given colour}
For all $\E\in\calB(\K)$ and $f\in C_b(\bar D)$ we have that
\begin{equation}\label{eq:measure of mNE}
( m^{N,\E}_t-Y^{N,\E}_t\pi)(f)\ra 0\quad \text{in probability as}\quad t\wedge N\ra\infty.
\end{equation}
In particular, taking $f=1$, we have for any $\E\in\calB(\K)$ that
\begin{equation}\label{eq:number of colours in E approximates Y of E propn}
\chi^N_t(\E)-\calY^{N}_t(\E)\ra 0\quad \text{in probability as}\quad t\wedge N\ra\infty.
\end{equation}
\end{prop}
Heuristically, this says that over an $\calO(1)$ timescale, the number of particles whose colour belongs to $\E$ is given by $Y^{N,\E}_t$, and the spatial distribution of these particles is given by $\pi$, for any $\E\in \calB(\K)$. 

Using Proposition \ref{prop:conv of dist for given colour} and the calculations of Section \ref{section: characterisation of Y}, we will then establish that $(\calY^N_{Nt})_{0\leq t\leq T}$ converges in distribution to the Wright-Fisher process of rate $\Theta$.
\begin{prop}\label{prop:Convergence of tilted empirical measure to FV diffusion}
We take some deterministic initial profile $\nu^0\in\mathcal{P}(\K)$ and define $(\nu_t)_{0\leq t<\infty}$ to be a Wright-Fisher process of rate $\Theta$ and initial condition $\nu_0:=\nu^0$. We then consider a sequence of Fleming-Viot multi-colour Processes $(\vec{X}^N_t,\vec{\eta}^N_t)_{0\leq t<\infty}$. We assume that $\calY^N_0\ra \nu^0$ in $\Wat$ in probability.

We fix $T<\infty$ and rescale time by $t\mapsto Nt$.  We then have the convergence
\begin{equation}\label{eq:conv of measure-valued process to FV in Weak atomic metric}
(\calY^N_{Nt})_{0\leq t\leq T}\ra (\nu_t)_{0\leq t\leq T}\quad\text{in}\quad D([0,T];\mathcal{P}_{\Wah}(\K))\quad\text{in distribution as}\quad N\ra\infty.
\end{equation}
\end{prop}

We recall in particular that $(\nu_t)_{0\leq t\leq T}\in C([0,T];\calP_{\Wah}(\K))$ almost surely, by Theorem \ref{theo:Well-posedness of WF process}. We now take a sequence $(\vec{t}^N)_{2\leq N<\infty}=((t^N_1,\ldots,t^N_n))_{2\leq t\leq N}$ converging to $\vec{t}=(t^1,\ldots,t^n)$ as in the statement of Theorem \ref{theo:Convergence to FV diffusion}. It follows that
\[
(\calY^N_{Nt_1^N},\ldots,\calY^N_{Nt_n^N})\ra (\nu_{t_1},\ldots,\nu_{t_n})\quad\text{in}\quad (\calP_{\Wah}(\K))^n\quad\text{in distribution as}\quad N\ra\infty.
\]
Recalling the positivity and boundedness of $\phi$ from Theorem \ref{theo:convergence to QSD for reflected diffusion with soft killing}, we observe that
\begin{equation}\label{eq:bounding chi by Y}
\chi^N_{t}\leq C\calY^N_{t}\quad\text{for all}\quad t\geq 0,\; N\in\Nm,\quad\text{for some fixed uniform constant}\quad C<\infty.
\end{equation}
We now fix $1\leq k\leq n$. Since $(\calY^N_{Nt^N_k})_{N\geq 1}$ is a tight sequence of random measures, it follows from \eqref{eq:bounding chi by Y} that $(\chi^N_{Nt^N_k})_{N\geq 1}$ must also be a tight sequence of random measures. It therefore follows from \eqref{eq:number of colours in E approximates Y of E propn} and Lemma \ref{lem:tight sequences of measures converging to same limit lemma} that $\Wah(\calY^N_{Nt^N_k},\chi^N_{Nt^N_k})\ra 0$ in probability as $N\ra\infty$. We have therefore established that
\[
(\chi^N_{Nt_1^N},\ldots,\chi^N_{Nt_n^N})\ra (\nu_{t_1},\ldots,\nu_{t_n})\quad\text{in}\quad (\calP_{\Wah}(\K))^n\quad\text{in distribution as}\quad N\ra\infty.
\]
We have left only to strengthen the notion of convergence to convergence in the weak atomic metric.
After proving propositions \ref{prop:conv of dist for given colour} and \ref{prop:Convergence of tilted empirical measure to FV diffusion}, we shall establish the following proposition.
\begin{prop}\label{prop:prop for strengthening to weak atomic metric}
We recall that $\Psi(u):=(1-u)\vee 0$ is the function used to define the $\Wat$ metric in Appendix \ref{appendix:Weak atomic metric}. For all $\delta>0$ there exists $\epsilon>0$ such that
\[
\inf_{N}\Pm\Big(\sup_{0\leq t\leq T}\sum_{\substack{k,\ell\in \K\\ k\neq \ell}}\chi^{N}_{Nt}(\{k\})\chi^{N}_{Nt}(\{\ell\})\Psi\Big(\frac{d(k,\ell)}{\epsilon}\Big)\leq \delta\Big)\geq 1-\delta.
\]
Note that the above sum is well-defined as the terms are non-zero only for $k,\ell\in \text{supp}(\chi^{N}_0)$.
\end{prop}
We may therefore apply the compact containment condition, Lemma \ref{lem:relatively compact family of measures in weak atomic topology}, to conclude that $\{\Law(\chi^{N}_{Nt_k^N})\}$ is tight in $\calP(\calP_{\Wat}(\K))$ for all $1\leq k\leq n$, so that we have Theorem \ref{theo:Convergence to FV diffusion}. 

We have left to prove propositions \ref{prop:conv of dist for given colour}, \ref{prop:Convergence of tilted empirical measure to FV diffusion} and \ref{prop:prop for strengthening to weak atomic metric}

\subsection{Proof of Proposition \ref{prop:conv of dist for given colour}}\label{subsection: proof of conv of dist for given colour}

We fix $\E\in\calB(\K)$ and $f\in C_b(\bar D)$. We write
\[
\psi^N_t:=\frac{1}{N}\sum_{i=1}^N\delta_{(X^i_t,\eta^i_t)}\in \calP(\bar D\times \K),\; 0\leq t<\infty,\; N\in\Nm\quad\text{and}\quad f^{\E}(x,\eta):=f(x)\Ind(\eta\in\E).
\]
We take the $\bar D\times \K$-valued killed strong Markov process $((X_t,\eta_t))_{0\leq t<\tau_{\partial}}$ defined in Definition \ref{defin:limit for multi colour over fixed times}. It follows from Theorem \ref{theo:hydrodynamic limit for multicolour process} that there exists $c_t\ra 0$ as $t\ra \infty$ such that, for all $N<\infty$ and initial conditions $(\vec{X}^N_0,\vec{\eta}^N_0)$ we have that
\begin{equation}\label{eq:convergence in time for killed Markov 1 in main theorem proof}
 \lvert \lvert\Law_{\psi^N_0}((X_t,\eta_t)\lvert \tau_{\partial}>t)-\pi\otimes \calY^N_0 \rvert \rvert_{\TV}\leq c_t,\quad 0\leq t<\infty.
\end{equation}
It follows from Theorem \ref{theo:hydrodynamic limit for multicolour process} that for all $t<\infty$ and $N\geq 2$ there exists $C_{t,N}<\infty$ such that
\begin{equation}\label{eq:hydro convergence 1 in main theorem proof}
\expE_{(\vec{X}^N_0,\vec{\eta}^N_0)}\big[\big\lvert\big(\psi^N_t-\Law_{\psi^N_0}((X_t,\eta_t)\lvert \tau_{\partial}>t)\big)(f^{\E})\big\rvert\big]\leq C_{t,N}\lvert\lvert f\rvert\rvert_{\infty},
\end{equation}
for any initial condition $(\vec{X}^N_0,\vec{\eta}^N_0)$, with $C_{t,N}\ra 0$ as $N\ra\infty$ for fixed $t<\infty$. On the other hand we observe that
\[
\psi^N_t(f^{\E})=m^{N,\E}_t(f),\quad (\pi\otimes \calY^N_t)(f^{\E})=\frac{\sum_{i=1}^N\phi(X^{N,i}_0)\pi\otimes\delta_{\eta^{N,i}_0}}{\sum_{i=1}^N\phi(X^{N,i}_0)}(f^{\E})=Y^{N,\E}_0\pi(f).
\]
Therefore combining \eqref{eq:convergence in time for killed Markov 1 in main theorem proof} with \eqref{eq:hydro convergence 1 in main theorem proof} we obtain that
\[
\expE[\lvert (m^{N,\E}_t-Y_0^{N,\E}\pi)(f)\rvert]\leq (C_{t,N}+c_t )\lvert\lvert f\rvert\rvert_{\infty}.
\]
Proposition \ref{prop:conv of dist for given colour} then follows by applying \eqref{eq:Y O FV plus U MG}.
\qed

\subsection{Proof of Proposition \ref{prop:Convergence of tilted empirical measure to FV diffusion}}\label{subsection:Convergence of tilted empirical measure to FV diffusion}
Our proof proceeds in the following $2$ steps:
\begin{enumerate}
\item \label{enum:conv on disj meas sets to WF diff}
We fix $\epsilon>0$ and take $\{k_1,k_2,\ldots\}$ to be a dense subset of $\K$. Then for all $i$ we can find $\frac{\epsilon}{2}<r_i<\epsilon$ such that $\nu^0(\partial B(k_i,r_i))=0$. We set $A_i=B(k_i,r_i)\setminus(\cup_{j=1}^{i-1}A_j)$. Since the disjoint union of $A_i$ is $\K$, we can find $n<\infty$ such that $\nu^0((\cup_{i=1}^n A_i)^c)<\epsilon$. We set $A_0:=(\cup_{i=1}^n A_i)^c$ and pick arbitrary $k_0\in \K$. 

We shall prove that $(Y^{N,A_0}_{Nt},\ldots,Y^{N,A_n}_{Nt})_{0\leq t\leq T}$ converges in $D([0,T];\Rm^{n+1})$ in distribution to a Wright-Fisher diffusion of rate $\Theta$ and initial condition $(\nu^0(A_0),\ldots,\nu^0(A_n))$.
\item \label{enum:conv to WF super in Wass}
We then use this to prove that
\begin{equation}\label{eq:conv of measure-valued type process in Wasserstein}
(\calY^N_{Nt})_{0\leq t\leq T}\ra (\nu_t)_{0\leq t\leq T}\quad\text{in}\quad D([0,T];\mathcal{P}_{\Wah}(\K))\quad\text{in distribution.}
\end{equation}
\end{enumerate}

\subsubsection*{Step \ref{enum:conv on disj meas sets to WF diff}}

We recall that the martingale $\calK^{N,\E}_t$ was defined in Theorem \ref{theo:characterisation of Y}, whilst $\Lambda^{N,\E}_t$ and $\Lambda^N_t$ were defined in \eqref{eq:lambdaNE definition} to be given by
\[
\begin{split}
\Lambda^{N,\E}_t:=\langle m^{N,\E}_t,\Gamma_{0}(\phi)+\kappa\phi^2\rangle +\langle m^{N,\E}_t,\phi^2\rangle \langle m^{N}_t,\kappa\rangle\;\; \text{for}\;\; \E\in\calB(\K),\;\;\text{and}\;\; \Lambda^N_t:=\Lambda^{N,\K}_t.
\end{split}
\]
We further define
\[
(\vec{Y}^N_{Nt})_{0\leq t\leq T}:=((Y^{N,A_0}_{Nt},\ldots,Y^{N,A_n}_{Nt}))_{0\leq t\leq T}.
\]
We will now verify that $\{\Law((\vec{Y}^N_{Nt})_{0\leq t\leq T})\}$ is tight in $\calP(D([0,T];\Rm^{n+1}))$ by using Aldous' criterion \cite[Theorem 1]{Aldous1978}. Since $0\leq Y^{N,A_i}_{Nt}\leq 1$, \cite[Condition (3)]{Aldous1978} is satisfied. 

We now take a sequence $(\tau_N,\delta_N)_{N=1}^{\infty}$ of stopping times $\tau_N$ and constants $\delta_n>0$ satisfying \cite[Condition (1)]{Aldous1978}, for the purpose of checking \cite[Condition (A)]{Aldous1978}. In particular we have by \eqref{eq:Y O FV plus U MG} that for some $F^N=\calO^{\FV}(1)$ and $M^N=\calO^{\MG}(1)$ we have
\[
Y^{N,A_i}_{N(\tau_N+\delta_N)}-Y^{N,A_i}_{N\tau_N}=F^N_{\tau_N+\delta_N}-F^N_{\tau_N}+Z^N_{\tau_N+\delta_N}-Z_N\ra  0\quad\text{in probability.}
\]
Thus $\{(\vec{Y}^N_{Nt})_{0\leq t\leq T}\}$ satisfies \cite[Condition (A)]{Aldous1978},
\[
\vec{Y}^N_{N(\tau_N+\delta_N)}-\vec{Y}^N_{N\tau_N}\ra  0\quad\text{in probability,}
\]
and hence $\{\Law((\vec{Y}^N_{Nt})_{0\leq t\leq T})\}$ is tight in $\calP(D([0,T];\Rm^{n+1}))$ by \cite[Theorem 1]{Aldous1978}. For all $\E\in\calB(\K)$, \eqref{eq:measure of mNE} implies that
\begin{equation}\label{eq:convergence of ANE}
\begin{split}
\Lambda^{N,\E}_t-Y^{N,\E}_t[\langle \pi,\Gamma_{0}(\phi)+\kappa\phi^2\rangle+\langle \pi,\phi^2\rangle \langle \pi,\kappa\rangle],\; Q^N_t-\langle \pi,\phi\rangle \overset{p}{\ra} 0\quad \text{as}\quad t\wedge N\ra\infty.
\end{split}
\end{equation}
Then applying \eqref{eq:measure of mNE} and Fubini's theorem to \eqref{eq:Y in terms of K and extra terms}, we obtain
\begin{equation}\label{eq:K minus Y converges to 0}
\begin{split}
\sup_{0\leq t\leq T}\lvert (Y^{N,\E}_{Nt}-Y^{N,\E}_{0})-(\calK^{N,\E}_{Nt}-\calK^{N,\E}_{0})\rvert \ra 0\quad\text{in probability as}\quad N\ra\infty.
\end{split}
\end{equation}

We consider a subsequential limit in distribution of $\{(\vec{Y}^N_{Nt})_{0\leq t\leq T}\}$,
\[
(\vec{Y}_t)_{0\leq t\leq T}=((Y^{A_0}_t,\ldots,Y^{A_n}_t))_{0\leq t\leq T},
\]
which by Part \ref{enum:Y O FV plus U MG} of Theorem \ref{theo:characterisation of Y} must have continuous paths. Using \eqref{eq:K minus Y converges to 0} we conclude that $(\calK^{N,A_0}_{Nt},\ldots,\calK^{N,A_n}_{Nt})_{0\leq t\leq T}$ converges in $D([0,T];\Rm^{n+1})$ in distribution along this subsequence to 
\[
(\vec{Y}_t-\vec{Y}_0)_{0\leq t\leq T}. 
\]
Since $(\calK^{N,A_0}_{Nt},\ldots,\calK^{N,A_n}_{Nt})_{0\leq t\leq T}$ is a martingale for each $N$, $(Y^{A_0}_{t},\ldots,Y^{A_n}_{t})_{0\leq t\leq T}$ is a martingale with respect to its natural filtration $\sigma_t$. We then obtain from \eqref{eq:covariation of KE and KF} that for all $0\leq i,j\leq n$,
\[
\begin{split}
\calK^{N,A_i}_{Nt}\calK^{N,A_j}_{Nt}
-\int_0^t\frac{1}{(Q^N_s)^2}\Big[
\Ind(i=j)\Lambda^{N,A_i}_{Ns}
-Y^{N,A_i}_{Ns} \Lambda^{N,A_j}_{Ns}
-Y^{N,A_j}_{Ns} \Lambda^{N,A_i}_{Ns} +Y^{N,A_i}_{Ns}Y^{N,A_j}_{Ns} \Lambda^{N}_{Ns} \Big]ds
\\-\calO_t^{\MG}\big(\frac{Y^{N,A_i}_{N\cdot}Y^{N,A_j}_{N\cdot}+\Ind(i=j)Y^{N,A_i}_{N\cdot}}{N^2}\big)-\calO_t^{\FV}\Big(\frac{Y^{N,A_i}_{N\cdot}Y^{N,A_j}_{N\cdot}+\Ind(i=j)Y^{N,A_i\cap A_j}_{N\cdot}}{N}\Big)\cap\calO^{\Cts}_t
\end{split}
\]
is a martingale for all $N$, so that by \eqref{eq:convergence of ANE} and \eqref{eq:K minus Y converges to 0},
\[
\begin{split}
Y^{A_i}_{t}Y^{A_j}_{t}-\int_0^t\frac{\langle \pi,\Gamma_{0}(\phi)+\kappa\phi^2\rangle+\langle \pi,\phi^2\rangle \langle \pi,\kappa\rangle}{\langle \pi,\phi\rangle^2}\big(
\Ind(i=j)Y^{A_i}_{s}
-Y^{A_i}_{s} Y^{A_j}_{s}
 \big) ds
\end{split}
\]
is a $(\sigma_t)_{t\geq 0}$-martingale. Thus
\[
\begin{split}
[Y^{A_i},Y^{A_j}]_{t}=\int_0^t\frac{\langle \pi,\Gamma_{0}(\phi)+\kappa\phi^2\rangle+\langle \pi,\phi^2\rangle \langle \pi,\kappa\rangle}{\langle \pi,\phi\rangle^2}\big(
\Ind(i=j)Y^{A_i}_{s}
-Y^{A_i}_{s} Y^{A_j}_{s}
 \big) ds.
\end{split}
\]
We have that
\begin{equation}\label{eq:calculation for diffusivity equal to theta}
\begin{split}
\langle \pi,\Gamma_{0}(\phi)+\kappa\phi^2\rangle=\langle \pi,L(\phi^2)-2\phi L(\phi)\rangle=\lambda \langle \pi,\phi^2\rangle\quad \text{and}\quad 
\langle \pi,\kappa\rangle=\langle \pi,-L(1)\rangle=\lambda.
\end{split}
\end{equation}
Since $\nu^0(\partial A_i)=0$ for all $0\leq i\leq n$,
\[
\vec{Y}^N_0\ra (\nu^0(A_0),\ldots,\nu^0(A_n))\quad\text{in probability.}
\]
Thus each subsequential limit $(\vec{Y}_t)_{0\leq t\leq T}$ must be a solution of the $n+1$-type Wright-Fisher diffusion of rate $\Theta$ with initial condition $(\nu^0(A_0),\ldots,\nu^0(A_n))$, which is unique in law. Therefore we have convergence of the whole sequence in $D([0,T];\Rm^{n+1})$ in distribution to this Wright-Fisher diffusion.

\subsubsection*{Step \ref{enum:conv to WF super in Wass}}

Whereas we use $\Wah$ to denote the Wasserstein metric on $\calP(\K)$ generated by $d\wedge 1$, we metrise $\calP(D([0,T];\calP_{\Wah}(\K))))$ using the Wasserstein-$1$ metric generated by the metric $d_{D([0,T];\calP_{\Wah}(\K))}\wedge 1$, which we denote as $\overline{\Wah}$. We take $\epsilon_{\ell}\ra 0$, giving $k^{\ell}_0,k^{\ell}_1,\ldots,k^{\ell}_{n_{\ell}}\in\K$ for each $\ell\in \mathbb{N}$ as provided for in Step \ref{enum:conv on disj meas sets to WF diff}. We define for each $\ell\in\mathbb{N}$ the projection
\[
{\bf{P}}^{\ell}:\calP(\bar D)\ni \mu\mapsto \sum_{j=0}^{n_{\ell}}\mu(A_{k^{\ell}_j})\delta_{k^{\ell}_j}\in \calP(\bar D).
\]
We write $\nu^{\ell}_t:={\bf{P}}^{\ell}(\nu_t)$ for all $0\leq t<\infty$. It is immediate that
\[
\Wah({\bf{P}}^{\ell}(\mu),\mu)\leq \epsilon+\mu(A^{\ell}_0)\quad\text{for all}\quad\mu\in\calP(\bar D).
\]
Proposition \ref{prop:basic facts WF superprocess} implies that we can write
\[
\nu^{\ell}_t=\sum_{j=0}^{n_{\ell}}p^{\ell,j}_t\delta_{k^{\ell}_{n_{\ell}}},\quad 0\leq t<\infty,
\]
whereby $(p^{\ell,0}_t,\ldots,p^{\ell,n_{\ell}}_t)_{0\leq t<\infty}$ is an $n_{\ell}$-type Wright-Fisher diffusion of rate $\Theta$ and initial condition $(\nu^0(A_{k^{\ell}_0}),\ldots,\nu^0(A_{k^{\ell}_{n_{\ell}}}))$. Step \ref{enum:conv on disj meas sets to WF diff} therefore implies that
\[
\bar \Wah(\Law(({\bf{P}}^{\ell}(\calY^{N}_{Nt}))_{0\leq t\leq T}),\Law((\nu^{\ell}_t)_{0\leq t\leq T}))\ra 0\quad \text{as}\quad N\ra\infty.
\]

Therefore by the triangle inequality we have
\begin{equation}\label{eq:triangle inequality for Wass Law of YN and mu}
\begin{split}
\limsup_{N\ra\infty}\bar\Wah(\Law((\calY^{N}_{Nt})_{0\leq t\leq T}),\Law((\nu_t)_{0\leq t\leq T}))\leq
\limsup_{N\ra\infty}\bar\Wah(\Law((\calY^{N}_{Nt})_{0\leq t\leq T}),\Law(({\bf{P}}^{\ell}(\calY^{N}_{Nt}))_{0\leq t\leq T}))\\
+\bar\Wah(\Law((\nu^{\ell}_t)_{0\leq t\leq T}),\Law((\nu_t)_{0\leq t\leq T}))\leq 2\epsilon_{\ell} +\limsup_{N\ra\infty}\expE[\sup_{0\leq t\leq T}Y^{N,A_0^{\ell}}_{Nt}]+\expE[\sup_{0\leq t\leq T}\nu_{t}(A^{\ell}_0)].
\end{split}
\end{equation}

Using again Step \ref{enum:conv on disj meas sets to WF diff} we have that
\[
\limsup_{N\ra\infty}\expE[\sup_{0\leq t\leq T}Y^{N,A_0^{\ell}}_{Nt}]=\expE[\sup_{0\leq t\leq T}\nu_{t}(A^{\ell}_0)].
\]
Since $(\nu_t(A_0))_{0\leq t\leq T}$ is a Wright-Fisher diffusion of rate $\Theta$ and initial condition $\nu^0(A^{\ell}_0)<\epsilon_l$,
\[
\expE[\sup_{0\leq t\leq T}\nu_{t}(A^{\ell}_0)]\ra 0\quad\text{as}\quad \ell\ra\infty.
\]
Therefore taking $\limsup_{\ell\ra\infty}$ of both sides of \eqref{eq:triangle inequality for Wass Law of YN and mu} we obtain \eqref{eq:conv of measure-valued type process in Wasserstein}.

\subsection{Proof of Proposition \ref{prop:prop for strengthening to weak atomic metric}}\label{subsection:proof of prop for strengthening to weak atomic metric}

It follows from \eqref{eq:bounding chi by Y} that it suffices to verify the following condition.
\begin{cond}
For every $\delta>0$, there exists $\epsilon>0$ such that
\begin{equation}\label{eq:sum of prod of nearby Ys}
\limsup_{N\ra\infty}\Pm\Big(\sup_{0\leq t\leq T}\sum_{\substack{k,\ell\in \K\\ k\neq \ell}}Y^{N,\{k\}}_{Nt}Y^{N,\{\ell\}}_{Nt}\Psi\Big(\frac{d(k,\ell)}{\epsilon}\Big)\leq \delta\Big)\geq 1-\delta.
\end{equation}
\label{cond:condition equivalent to weak atomic compact containment}
\end{cond}
We calculate using parts \ref{enum:bound on cov of YE YF}, \ref{enum:Y O FV plus U MG} and \ref{enum:Thm 8.2 true for random sets} of Theorem \ref{theo:characterisation of Y} that
\[
\begin{split}
d(Y^{N,\{k\}}_{t}Y^{N,\{\ell\}}_{t})=Y^{N,\{k\}}_{t-}dY^{N,\{\ell\}}_{t}+Y^{N,\{\ell\}}_{t-}dY^{N,\{k\}}_{t}+d[Y^{N,\{k\}},Y^{N,\{\ell\}}]_t\\
=Y^{N,\{k\}}_{t-}\Big[d\calO^{\FV}_t\Big(\frac{Y^{N,\{\ell\}}}{N}\Big)+d\calO^{\MG}_t\Big(\frac{Y^{N,\{\ell\}}}{N}\Big)\Big]\\
+Y^{N,\{\ell\}}_{t-}\Big[d\calO^{\FV}_t\Big(\frac{Y^{N,\{k\}}}{N}\Big)+d\calO^{\MG}_t\Big(\frac{Y^{N,\{k\}}}{N}\Big)\Big]+d\calO^{\FV}_t\Big(\frac{Y^{N,\{k\}}Y^{N,\{\ell\}}}{N}\Big)\\
=d\calO^{\FV}_t\Big(\frac{Y^{N,\{k\}}Y^{N,\{\ell\}}}{N}\Big)+d\calO^{\MG}_t\Big(\frac{Y^{N,\{k\}}Y^{N,\{\ell\}}}{N}\Big),
\end{split}
\]
uniformly over all random $k,\ell\in\text{supp}(\calY^{N}_0)$. Thus 
\[
\sum_{k,\ell\in\mathbb{K}} Y^{N,\{k\}}_{Nt}Y^{N,\{\ell\}}_{Nt}=\calO^{\FV}_t\Big(\sum_{k,\ell\in\mathbb{K}} Y^{N,\{k\}}_{N\cdot}Y^{N,\{\ell\}}_{N\cdot}\Big)+(\text{martingale})_t.
\]
Therefore, using Gronwall's inequality, there exists uniform $C<\infty$ such that
\begin{equation}\label{eq:supermartingale for weak atomic metric proof}
e^{-Ct}\sum_{\substack{k,\ell\in \K\\ k\neq \ell}}Y^{N,\{k\}}_{Nt}Y^{N,\{\ell\}}_{Nt}\Psi\Big(\frac{d(k,\ell)}{\epsilon}\Big)
\end{equation}
is a supermartingale for all $N$ large enough. Therefore we have for all $N$ large enough that
\[
\begin{split}
\Pm\Big(\sup_{0\leq t\leq T}\sum_{\substack{k,\ell\in \K\\ k\neq \ell}}Y^{N,\{k\}}_{Nt}Y^{N,\{\ell\}}_{Nt}\Psi\Big(\frac{d(k,\ell)}{\epsilon}\Big)\leq \delta\Big)\\
\geq \Pm\Big(\sup_{0\leq t\leq T}e^{-Ct}\sum_{\substack{k,\ell\in \K\\ k\neq \ell}}Y^{N,\{k\}}_{Nt}Y^{N,\{\ell\}}_{Nt}\Psi\big(\frac{d(k,\ell)}{\epsilon}\big)\leq e^{-CT}\delta\Big)\\
\geq 1-\frac{1}{e^{-CT}\delta}\expE\Big[\sum_{\substack{k,\ell\in \K\\ k\neq \ell}}Y^{N,\{k\}}_{0}Y^{N,\{\ell\}}_{0}\Psi\Big(\frac{d(k,\ell)}{\epsilon}\Big)\Big].
\end{split}
\]

We have assumed that the initial conditions $\calY^N_0$ converge in the weak atomic metric, so Lemma \ref{lem:relatively compact family of measures in weak atomic topology} implies that
\[
\sup_N\expE\Big[\sum_{\substack{k,\ell\in \K\\ k\neq \ell}}Y^{N,\{k\}}_{0}Y^{N,\{\ell\}}_{0}\Psi\Big(\frac{d(k,\ell)}{\epsilon}\Big)\Big]\ra 0\quad\text{as}\quad \epsilon\ra 0.
\]

We have therefore verified Condition \ref{cond:condition equivalent to weak atomic compact containment} and hence established Proposition \ref{prop:prop for strengthening to weak atomic metric}.
\qed

This concludes the proof of Theorem \ref{theo:Convergence to FV diffusion}.
\qed

\section{Extension to the hard killing case}\label{section:hard killing extension}

The Fleming-Viot particle system was first introduced by Burdzy, Hołyst and March \cite{Burdzy2000} in the case of Brownian dynamics with instantaneous killing at the boundary (\textit{hard killing}). In this section, we extend Theorem \ref{theo:Convergence to FV diffusion} to this setting.

We assume throughout this section that $D$ is a bounded, connected, non-empty, open subset of $\Rm^d$ with $C^{\infty}$ boundary. The process $(B_t)_{0\leq t<\tau}$ evolves as a Brownian motion in $D$, killed at the time $\tau_{\partial}:=\inf\{t>0:B_{t-}\in \partial D\}$. The colour space $\K$ remains an arbitrary complete, seperable metric space.

The Fleming-Viot particle system and Fleming-Viot multi-colour process are then defined as before, except that the particles evolve as independent Brownian motions between killing times, and are killed instantaneously upon contact with the boundary $\partial D$ (i.e. when $B^{N,i}_{t-}\in \partial D$), rather than according to a Poisson clock. In particular, we define the following. 
\begin{defin}[Fleming-Viot multi-colour process with hard killing]
The Fleming-Viot multi-colour process, $((\vec{B}^N_t,\vec{\eta}^N_t))_{0\leq t<\infty} =\{(B^{N,i}_t,\eta^{N,i}_t)_{0\leq t<\infty}:i=1,\ldots,N\}$, is a $(D\times \K)^N$-valued process defined as follows:
\begin{equation}  
\left \{ \begin{split}
(i) & \quad \text{Initial condition: $((B^{N,1}_0,\eta^{N,1}_0),\ldots,(B^{N,N}_0,\eta^{N,N}_0))\sim \upsilon^N\in \mathcal{P}(( D\times\K)^N)$.}\\
(ii) & \quad \text{For $t \in [0,\infty)$ and between killing times the particles $(B^{N,i}_t,\eta^{N,i}_t)$ evolve as Brownian}\\
& \quad\text{motions in the first variable, and are constant in the second variable.}\\
(iii) & \quad \text{The particle $(B_t^{N,i},\eta_t^{N,i})$ is killed instantaneously whenever the first variable makes}\\
& \quad \text{contact with the boundary, i.e. when $B^{N,i}_t\in \partial D$. We write $\tau^i_k$ for the death times}\\
& \quad \text{of particle $(B^{N,i},\eta^{N,i})$ (with $\tau^i_0:=0$). When particle $(B^{N,i},\eta^{N,i})$ is killed at time $\tau^i_k$,}\\
& \quad\text{it jumps to the location of particle $(B^{N,j},\eta^{N,j})$, with $j=U^i_k\in\{1,\ldots,N\}\setminus\{i\}$ chosen}\\
& \quad\text{independently and uniformly at random, at which time we set}\\
& \quad \text{$(B^{N,i}_{\tau^i_k},\eta^{N,i}_{\tau^i_k}):=(B^{N,j}_{\tau^i_k-},\eta^{N,j}_{\tau^i_k-})$. Moreover we write $\tau_n$ for the $n^{\text{th}}$ time at which any particle}\\
& \quad \text{ is killed (with $\tau_0:=0$).}
\end{split}  \right.
\label{eq:hard killing N-particle m label (X,eta) system}
\end{equation}
We further define as before
\begin{equation}\label{eq:hard killing spatial and colour empirical measures}
J^N_t:=\frac{1}{N}\sup\{n>0:\tau_n\leq t\},\quad m^N_t:=\frac{1}{N}\sum_{i=1}^N\delta_{B^{N,i}_t}\quad\text{and}\quad  \chi^{N}_t:=\frac{1}{N}\sum_{i=1}^N\delta_{\eta^{N,i}_t}.
\end{equation}
\label{defin:hard killing Multi-Colour Process}
\end{defin}

It is an open problem to establish the well-posedness of this Fleming-Viot particle system without imposing constraints upon the boundary regularity of $\partial D$ \cite{Burdzya}. The issue is the possibility of there being infinitely many jumps in finite time. Implicit in the proof of \cite[Theorem 1.4]{Burdzy2000} is a proof of the well-posedness of the particle system when the domain satisfies an interior ball condition. Another proof under this condition is due to L\"obus \cite{Lobus2009}. A proof when the domain is Lipschitz with Lipschitz constant less than a given value (dependent upon the dimension and number of particles) is given in \cite{Bieniek2009}. In particular, the assumption that $D$ is bounded and the boundary $\partial D$ is $C^{\infty}$ certainly suffices to ensure the Fleming-Viot particle system (and therefore also the Fleming-Viot multi-colour process) is well-posed.

Brownian motion with hard killing, $(B_t)_{0\leq t<\tau_{\partial}}$, defines the $C_0$-Feller semigroup
\[
P_t:C_0(D)\ni f\mapsto (x\mapsto P_tf(x):=\expE_x[f(X_t)\Ind(\tau_{\partial}>t)])\in C_0(D).
\]
We write $L$ for its infitesimal generator, which is just the half Dirichlet Laplacian. We write $\phi\in C_0(D;\Rm_{>0})\cap C^{\infty}(D)$ for the unique principal right eigenfunction of $L$, of eigenvalue $-\lambda<0$. In general, quasi-stationary distributions correspond to left eigenmeasures of the infinitesimal generator \cite[Proposition 4]{Meleard2011}, which in this case corresponds to the normalised right eigenfunction $\phi$. Therefore the unique QSD of $(X_t)_{0\leq t<\tau_{\partial}}$, denoted as $\pi$, is given by
\[
\pi(dx)=\frac{\phi(x)dx}{\int_D\phi(x')dx'}.
\]

As in the soft killing case, the rate of the limiting Wright-Fisher process is given by
\begin{equation}\label{eq:hard killing const theta}
\Theta:=\frac{2\lambda \lvert\lvert \phi\rvert\rvert_{L^2(\pi)}^2}{\lvert\lvert \phi\rvert\rvert_{L^1(\pi)}^2}.
\end{equation}
We again define the tilted empirical measure of the colours by
\begin{equation}\label{eq:hard killing tilted empirical measure}
\begin{split}
\calY^N_t:=\frac{\frac{1}{N}\sum_{i=1}^N\phi(B^{i}_t)\delta_{\eta^{i}_t}}{\frac{1}{N}\sum_{i=1}^N\phi(B^{i}_t)}\in\calP(\K).
\end{split}
\end{equation}

We prove the following analogue of Theorem \ref{theo:Convergence to FV diffusion}.

\begin{theo}\label{theo:hard killing Convergence to FV diffusion}
We take some deterministic initial profile $\nu^0\in\mathcal{P}(\K)$ and fix a Wright-Fisher process on $\calP(\mathbb{K})$ of rate $\Theta$ and initial condition $\nu_0=\nu^0$, which we denote as $(\nu_t)_{0\leq t<\infty}$. We consider a sequence of Fleming-Viot multi-colour Processes, denoted by $(((\vec{B}^N_t,\vec{\eta}^N_t))_{0\leq t<\infty}:2\leq N<\infty)$, such that
\begin{equation}
\calP(\K)\ni\calY^N_0\ra \nu^0\in \calP(\K)\quad \text{in $\Wat$ in probability as}\quad N\ra\infty.
\end{equation}
We further require the following condition,
\begin{equation}\label{eq:initial condition doesn't accumulate mass on the boundary}
\limsup_{N\ra\infty}\expE[\frac{1}{N}\#\{i\in \{1,\ldots,N\} :d(B^{N,i}_0,\partial D)<\delta\})]\ra 0\quad\text{as} \quad \delta\ra 0.
\end{equation}

We now rescale time according to $t\mapsto Nt$. Then $(\chi^{N}_{Nt})_{t> 0}$ converges to $(\nu_t)_{t> 0}$ in finite-dimensional distributions, in the following sense. We fix arbitrary $n<\infty$ and $\vec{t}=(t^1,\ldots,t^n)\in [0,\infty)^n$ such that $t^1\leq \ldots\leq t^n$. We consider arbitrary sequences $(\vec{t}^N)_{2\leq N<\infty}:=((t^N_1,\ldots,t^N_n))_{2\leq N\leq \infty}$ such that:
\begin{enumerate}
\item
$t^N_1\leq \ldots\leq t^N_n$ for all $2\leq N<\infty$;
\item
$t^N_i\ra t_i$ as $N\ra \infty$ for all $1\leq i\leq n$;
\item\label{enum:hard killing main theorem requirement that times are large}
$Nt^N_n\geq\ldots\geq Nt^N_1\ra\infty$ as $N\ra \infty$.
\end{enumerate}
We then have that
\begin{equation}\label{eq:hard killing main theorem convergence of empirical measures}
(\chi^{N}_{Nt_1^N},\ldots,\chi^{N}_{Nt_n^N}) \ra (\nu_{t_1},\ldots,\nu_{t_n})\quad\text{in}\quad (\calP_{\Wat}(\mathbb{K}))^n\quad \text{in distribution as}\quad N\ra\infty.
\end{equation}
\end{theo}

We observe that the only difference with Theorem \ref{theo:Convergence to FV diffusion} is the condition \eqref{eq:initial condition doesn't accumulate mass on the boundary}, which is necessitated by the fact that the domain is no longer compact. Indeed when we considered reflected diffusions with soft killing, the domain $\bar D$ was compact, with the principal eigenfunction $\phi$ being bounded away from $0$. However in the case of hard killing, the domain $D$ is non-compact, with $\phi$ vanishing at the boundary. As a consequence of this, we must establish controls on the mass near the boundary. In order to obtain a hydrodynamic limit theorem over a fixed time horizon for the Fleming-Viot particle system with hard killing, it is important to obtain such controls over a fixed time horizon, as have been established in \cite{Burdzy2000,Villemonais2011,Tough2022}. Since Theorem \ref{theo:Convergence to FV diffusion} is a statement about the Fleming-Viot particle system over an $\calO(N)$ time horizon, however, we require controls on the mass near the boundary over an $\calO(N)$ time horizon. Such controls have not previously been established, and represent the principle obstacle to extending Theorem \ref{theo:Convergence to FV diffusion} to include hard killing. We will obtain such controls in Subsection \ref{subsection:additional control for hard killing} in the case of Brownian dynamics with hard killing, allowing us to prove Theorem \ref{theo:hard killing Convergence to FV diffusion}.

The rest of this Section is devoted to the proof of Theorem \ref{theo:hard killing Convergence to FV diffusion}. We will outline in Subsection \ref{subsection:hard killing notation} the notation we will use for the proof of Theorem \ref{theo:hard killing Convergence to FV diffusion}, and in particular where it differs from the notation outlined in Section \ref{section:notation for the proof} for the soft killing case. We will then obtain controls on the mass near the boundary $\partial D$ in Subsection \ref{subsection:additional control for hard killing}. The rest of the proof follows the same outline as the proof of Theorem \ref{theo:Convergence to FV diffusion}. In Subsection \ref{section:hard killing characterisation of Y} we will then perform calculations for $\calY^N_t$ which are analogous to those of Section \ref{section: characterisation of Y}. Finally, we conclude the proof of Theorem \ref{theo:hard killing Convergence to FV diffusion} in Subsection \ref{subsection:hard killing theorem proof}, analogously to Section \ref{section:proof of convergence to FV diffusion}. We collect the proofs of technical lemmas needed for the proof of Theorem \ref{theo:hard killing Convergence to FV diffusion} in Appendix \ref{appendix:hard killing}. We will not repeat calculations which are identical to those found in the proof of Theorem \ref{theo:Convergence to FV diffusion}, pointing out only where the proof differs.

\subsection{Notation for the proof of Theorem \ref{theo:hard killing Convergence to FV diffusion}}\label{subsection:hard killing notation}

Recalling that $L$ is the (half) Dirichlet Laplacian (there is no analogue of $L_0$ here), we define on the domain $\calD(\Gamma):=\{f:f,f^2\in \calD(L)\}\subseteq C_0(D)$ the Carre du champs operator 
\begin{equation}\label{eq:hard killing Carre d champs}
\Gamma(f):=L(f^2)-2fL(f)=\lvert \nabla f\rvert^2.
\end{equation}
We note in particular that $\nabla \phi$ is globally bounded by \cite[Theorem 1.1]{Xu2009}, so that $\phi\in \calD(\Gamma)$.

In the soft killing case, the definitions of $P_t^{N,\E}$, $Q_t^N$, $Y_t^{N,\E}$ and $m_t^{N,\E}$ for $\E\in\calB(\K)$ are given in Section \ref{section:notation for the proof}. We adopt the same definitions here, except that $\phi$ is now the principal eigenfunction of the half Dirichlet Laplacian. Moreover we define $F$, $H$ and $\vec{R}^N_t$ as in \eqref{eq:defin function F}, \eqref{eq:formula for nabla F and Hessian} and \eqref{eq:defin vec R} respectively, so that
\[
\begin{split}
F(\vec{r})=\frac{p}{q},\quad H(\vec{r})=\begin{pmatrix}
0 & -\frac{1}{q^2}\\
-\frac{1}{q^2} & 2\frac{p}{q^3}
\end{pmatrix}\quad \text{for}\quad\vec{r}=\begin{pmatrix}
p\\
q
\end{pmatrix}\in \Rm_{>0}^2,\quad
\vec{R}^{N,\E}_t:=\begin{pmatrix}
P^{N,\E}_t\\
Q^{N}_t
\end{pmatrix}.
\end{split}
\]

We adopt the notation described in Subsection \ref{section:O and U notation}, so that $V_t$, $\calO_t^{\MG}$, $\calO_t^{\FV}$, $\calO^{\Delta}_t$, $\calO^{\Lip}_t$ and $\calO^{\Cts}_t$ are defined as in Subsection \ref{section:O and U notation} in particular. We further define the following:
\begin{enumerate}
\item
$X^N_t=\calO^{\MG}_t(Y^N,J^N)$ if for all $N\geq N_0$ (for some $N_0<\infty$) and for some $C<\infty$, $X^N_t$ is a martingale whose quadratic variation is such that
\begin{equation}
[X^N]_t-\int_0^tCY^N_sdJ^N_s\quad\text{is a supermartingale.}
\end{equation}
\item
$X^N_t=\calO^{\FV}_t(Y^N,J^N)$ if for all $N\geq N_0$ (for some $N_0<\infty$) and for some $C<\infty$, $X^N_t$ is a finite variation process whose total variation, $V_t(X^N)$, is such that
\begin{equation}
V_t(X^N)-\int_0^tCY^N_sdJ^N_s\quad\text{is a supermartingale.}
\end{equation}
\end{enumerate}

\subsection{Control on the mass near the boundary}
\label{subsection:additional control for hard killing}

\begin{lem}\label{lem:mass near boundary controls}
We fix $T<\infty$. We consider a sequence of Fleming-Viot particle systems $(\vec{X}^N_t)_{t\geq 0}$ satisfying \eqref{eq:initial condition doesn't accumulate mass on the boundary}. Then for all $\epsilon>0$ there exists $\delta=\delta(\epsilon)>0$ such that
\begin{equation}\label{eq:mass near boundary control 1}
\liminf_{N\ra\infty}\Pm_{\vec{X}^N_0}(m^N_t(B(\partial D,\delta))\leq \epsilon\quad\text{for all} \quad 0\leq t\leq NT)>1-\epsilon.
\end{equation}
\end{lem}
We define for all $\epsilon>0$ the stopping time
\begin{equation}\label{eq:stopping time tauNepsilon}
\tau^N_{\epsilon}:=\inf\{t>0:m^N_t(B(\partial D,\delta(\epsilon)))> 2\epsilon\}.
\end{equation}
We observe in particular that $Q^N_t$ is uniformly bounded from below by a strictly positive constant dependent only upon $\epsilon$, for all $0\leq t\leq\tau^N_{\epsilon}$. Moreover it follows from \eqref{eq:mass near boundary control 1} that
\begin{equation}\label{eq:hard killing stopping time larger than NT with large prob}
\liminf_{N\ra\infty}\Pm(\tau^N_{\epsilon}>NT)\geq 1-\epsilon.
\end{equation}

\begin{proof}[Proof of Lemma \ref{lem:mass near boundary controls}]
Controls on the mass of particles near the boundary over a fixed time horizon were established by Burdzy, Hołyst and March in \cite{Burdzy2000}. These involve a coupling between the particles and a family of Bessel processes. Similar couplings were established by Villemonais in \cite[Section 3]{Villemonais2011}, and by Nolen and the present author in \cite[Section 4]{Tough2022}. We utilise the coupling obtained in \cite[Section 4]{Tough2022}, since this coupled family of processes is jointly independent, allowing us to apply Cramer's theorem.

Since $\partial D$ is $C^{\infty}$ and $D$ is bounded, $D$ satisfies an interior ball condition for some radius $r>0$. For this $r>0$, \cite[Proposition 4.2]{Tough2022} provides a family of independent $[0,r]$-valued continuous processes $(\eta^{N,1}_t)_{t\geq 0},\ldots, (\eta^{N,N}_t)_{t\geq 0}$, with $(\eta^{N,i}_t)_{t\geq 0}$ having the same distribution for all $i$ and all $N$, such that
\begin{equation}\label{eq:comparison of distance to the boundary and Bessel}
d(B^{N,i}_t,\partial D)\geq r-\eta^{N,i}_t\quad\text{for}\quad 0\leq t<\infty.
\end{equation}

We define $M_N:=\lfloor \frac{N}{k}\rfloor$. We then define for $1\leq m\leq M_N$, $1\leq k\leq N<\infty$, $0<T_0<T_1<\infty$ and $\delta_0>0$ the events
\begin{equation}
A_N^{k,T_0,T_1,\delta_0}(m):=\{\sup_{T_0\leq t\leq T_1}\lvert \{(m-1)k+1\leq i\leq mk:\eta^{N,i}_t\geq r-\delta\} \rvert\geq 2\}.
\end{equation}
We observe that for fixed $k,T_0,T_1,\delta$, the events $A_N^{k,T_0,T_1,\delta}(m)$ for $ m\leq M_N$ and $N\geq k$ have the same probability. Moreover it follows from \cite[Lemma 4.4]{Tough2022} that, for fixed $k,T_0,T_1$,
\begin{equation}
\Pm(A^{k,T_0,T_1,\delta}_N(1))\ra 0 \quad \text{as}\quad \delta \ra 0.
\end{equation}

Moreover it follows from \eqref{eq:comparison of distance to the boundary and Bessel} that
\begin{equation}
\sup_{T_0\leq t\leq T_1}m^N_t(B(\partial D,\delta))\leq \frac{k}{N}\sum_{1\leq m\leq M_N}\Ind(A_N^{k,T_0,T_1,\delta}(m))+\frac{1}{k}+\frac{k}{N}.
\end{equation}

We may therefore apply Cramer's theorem to conclude that for all $\epsilon>0$ and $0<T_0\leq T_1<\infty$, there exists $\delta_0>0$ and $c_0>0$ such that
\begin{equation}
\Pm_{\vec{X}^N_0}(\sup_{T_0\leq t\leq T_1}m^N_t(B(\partial D,\delta))\geq \epsilon)\leq e^{-c_0 N}\quad\text{for all $N$ large enough}.
\end{equation}

It follows from a union bound that, for all $\epsilon,T_0>0$, there exists $\delta_0>0$ such that 
\begin{equation}\label{eq:control from Cramer}
\liminf_{N\ra\infty}\Pm_{\vec{X}^N_0}[m^N_t(B(\partial D,\delta_0)]\leq \epsilon\quad\text{for all} \quad t_0\leq t\leq NT)=1.
\end{equation}

All that remains is to deal with the initial time $[0,T_0]$, which can be addressed with a crude bound. We observe that, for any $T_0,\delta_1>0$, in order for a given particle to enter $B(\partial D,\delta_1)$, it either has to start within $B(\partial D,2\delta_1)$, or else travel at least a distance $\delta_1$ in time $T_0$ (note that killing only occurs at the boundary). The former possibility can be controlled by \eqref{eq:initial condition doesn't accumulate mass on the boundary}, the latter by controlling the distance travelled by Brownian motion in time $T_0$. We obtain that for all $\epsilon>0$ there exists $T_0>0$ and $\delta_1>0$ such that
\begin{equation}\label{eq:initial time crude bound}
\liminf_{N\ra\infty}\Pm(m^N_t(B(\partial D,2\delta_1))\leq \epsilon\quad\text{for all} \quad 0\leq t\leq T_0)> 1-\epsilon.
\end{equation}

Therefore, for given $\epsilon>0$, we choose $\delta_1,T_0>0$ for which we have \eqref{eq:initial time crude bound}. For this same $T_0,\epsilon>0$, we then obtain $\delta_0>0$ such that we have \eqref{eq:control from Cramer}. Taking $\delta:=\delta_0\wedge \delta_1$, we obtain \eqref{eq:mass near boundary control 1}.
\end{proof}

\subsection{Analogue of the calculations of Section \ref{section: characterisation of Y}}\label{section:hard killing characterisation of Y}

We obtain the following analogue of Theorem \ref{theo:characterisation of Y}.

\begin{theo}\label{theo:hard killing characterisation of Y}
We fix arbitrary $\epsilon>0$, and localise up to the stopping time $\tau^N_{\epsilon}$ defined in \eqref{eq:stopping time tauNepsilon}. None of the following statements should be understood to be uniform in $\epsilon$, but rather should be understood as statements for arbitrary fixed $\epsilon>0$. We have the following, uniformly over all choices of $\calE,\calF\in\calB(\K)$:
\begin{enumerate}
\item\label{enum:hard killing bound on cov of YE YF}
The covariation $[Y^{N,\E},Y^{N,\F}]_{t\wedge \tau^N_{\epsilon}}$ is such that for disjoint $\E, \F\in\calB(\K)$ we have
\[
[Y^{N,\E},Y^{N,\F}]_{t\wedge \tau^N_{\epsilon}}=\calO^{\FV}_t(\frac{Y^{N,\E}Y^{N,\F}}{N},J^N).
\]
\item\label{enum:hard killing Y in terms of K and extra terms}
There exists martingales $\calK^{N,\E}_t$ for $\E\in\calB(\K)$ such that $Y^{N,\E}_t$ satisfies
\begin{equation}\label{eq:hard killing Y in terms of K and extra terms}
\begin{split}
Y_{t\wedge \tau^N_{\epsilon}}^{N,\E}=Y_0^{N,\E}+\calK^{N,\E}_{t\wedge \tau^N_{\epsilon}}+\calO^{\MG}_t(\frac{Y^{N,\E}}{N^3},J^N)
+\frac{1}{N}\int_0^{t\wedge \tau^N_{\epsilon}}\frac{1}{(Q_{s-}^N)^2}\langle Y_{s-}^{N,\E}m_{s-}^N-m_{s-}^{N,\E},\Gamma (\phi)\rangle ds\\+\calO^{\FV}_{t}(\frac{Y^{N,\E}}{N^2},J^N)\cap\calO^{\Delta}_t(\frac{1}{N^3})
+\frac{1}{N-1}\int_0^{t\wedge \tau^N_{\epsilon}}\frac{1}{(Q_{s-}^N)^2}\langle Y_{s-}^{N,\E}m_{s-}^N-m_{s-}^{N,\E},\phi^2\rangle dJ^N_{s}
\end{split}
\end{equation}
for $\E\in\calB(\K)$, and such that
\begin{equation}\label{eq:hard killing covariation of KE and KF}
\begin{split}
[\calK^{N,\E},\calK^{N,\F}]_{t\wedge \tau^N_{\epsilon}}\\=\frac{1}{N-1}\int_0^{t\wedge \tau^N_{\epsilon}}\frac{1}{(Q_{s-}^N)^2}\langle m_{s-}^{N,\E\cap F}-Y^{N,\E}_{s-}m_{s-}^{N,\F}
-Y^{N,\E}_{s-}m_{s-}^{N,\F}+Y^{N,\E}_{s-}Y^{N,\F}_{s-}m^N_s,\phi^2\rangle dJ^N_s\\
+\frac{1}{N}\int_0^{t\wedge \tau^N_{\epsilon}}\frac{1}{(Q_{s-}^N)^2}\langle m_{s-}^{N,\E\cap F}-Y^{N,\E}_{s-}m_{s-}^{N,\F}-Y^{N,\E}_{s-}m_{s-}^{N,\F}+Y^{N,\E}_{s-}Y^{N,\F}_{s-}m^N_s,\Gamma (\phi)\rangle ds.
\end{split}
\end{equation}
\item\label{enum:hard killing Y O FV plus U MG}
Furthermore $Y^{N,\E}_t$ satisfies
\begin{equation}\label{eq:hard killing Y O FV plus U MG}
\begin{split}
Y^{N,\E}_{t\wedge \tau^N_{\epsilon}}=\Big[\calO^{\FV}_t\big(\frac{Y^{N,\E}}{N}\big)+\calO^{\FV}_t\big(\frac{Y^{N,\E}}{N},J^N\big)+\calO^{\MG}_t\big(\frac{Y^{N,\E}}{N}\big)+\calO^{\MG}_t\big(\frac{Y^{N,\E}}{N},J^N\big)\Big]\cap\calO^{\Delta}_t(\frac{1}{N}).
\end{split}
\end{equation}
\item
Parts \ref{enum:bound on cov of YE YF}- \ref{enum:Y O FV plus U MG} remain true if $\E$ and $\F$ are replaced with a sequence of $\sigma_0$-measurable random sets $\calE^N$ and $\calF^N$.\label{enum:hard killing Thm 8.2 true for random sets}
\end{enumerate}
\end{theo}

\begin{proof}[Proof of Theorem \ref{theo:hard killing characterisation of Y}]

It is useful (and simplifies our calculations) to note that since $\phi$ vanishes on the boundary and killing only occurs on the boundary, we necessarily have that $\phi(B^i_{t-})=0$ if $B^i$ is killed at time $t$.

It is straightforward to obtain the following analogue of Proposition \ref{prop:proposition characterising pk}, by examining the martingale
\[
P_t^{N,\E}-P_0^{N,\E}-\int_0^tP^{N,\E}_{s-}\big(-\lambda ds+\frac{N}{N-1}dJ^{N}_s\big).
\]
\begin{prop}\label{prop:hard killing proposition characterising pk}
We have for all $\E\in\calB(\K)$ that
\begin{equation}
dP^{N,\E}_t=P^{N,\E}_{t-}\big(-\lambda dt+\frac{N}{N-1}dJ^{N}_t\big)+dM^{N,\E}_t,
\label{eq:hard killing sde for pk}
\end{equation}
whereby $M^{N,\E}$ are martingales which satisfy for all $\E,\F\in\calB(\K)$
\begin{equation}
\begin{split}
[M^{N,\E},M^{N,\F}]_t=\frac{1}{N}\int_0^t\langle m_s^{N,\E\cap \F},\Gamma_0(\phi)\rangle ds\\+\frac{1}{N}\int_0^t\frac{N}{N-1}\langle m_{s-}^{N,\E\cap \F},\phi^2\rangle-\Big(\frac{N}{N-1}\langle m_{s-}^{N,\E},\phi\rangle\Big)\Big(\frac{N}{N-1}\langle m_{s-}^{N,\F},\phi\rangle\Big)dJ^N_s.
\end{split}
\label{eq:hard killing covariation for ME MF}
\end{equation}
Moreover it is apparent that $\sum_{s\leq t}\Delta M^{N,\E}_t= \calO^{\MG}_t(\frac{P^{N,\E}}{N},J^N_t)$ for $\calE\in \calB(\K)$. We write $M^N_t$ for $M^{N,\K}_t$. 
\end{prop}

Using that $Q^N_t$ is bounded from below away from $0$ for $t\leq \tau^N_{\epsilon}$, uniformly in $N$, we obtain Part \ref{enum:hard killing bound on cov of YE YF} of Theorem \ref{theo:hard killing characterisation of Y} in precisely the same manner that we obtained Part \ref{enum:bound on cov of YE YF} of Theorem \ref{theo:characterisation of Y} in Subsection \ref{subsection:proof of QV of YE YF disjoint part of calculations theorem}.

Replacing the boundedness of the jump rate with a bound in terms of the number of jumps, we obtain the following analogue of Lemma \ref{lem:Ito for F}.
\begin{lem}[Ito's Lemma]\label{lem:hard killing Ito for F}
We have
\begin{equation}
\begin{split}
Y^{N,\E}_t=Y^{N,\E}_0+\int_0^t\nabla F(R^{N,\E}_{s-})\cdot dR^{N,\E}_s+\frac{1}{2}dR^{N,\E}_s\cdot H(R^{N,\E}_{s-})dR^{N,\E}_s+\calO^{\FV}_{t}(\frac{Y^{N,\E}}{N^2},J^N)\cap\calO^{\Delta}_t(\frac{1}{N^3}).
\end{split}
\end{equation}
\end{lem}
We then obtain parts \ref{enum:hard killing Y in terms of K and extra terms}-\ref{enum:hard killing Thm 8.2 true for random sets} of Theorem \ref{theo:hard killing characterisation of Y} as in the proof of Theorem \ref{theo:characterisation of Y}.
\end{proof}

\subsection{Proof of Theorem \ref{theo:hard killing Convergence to FV diffusion}}\label{subsection:hard killing theorem proof}

We firstly prove the following analogue of proposition \ref{prop:conv of dist for given colour}.
\begin{prop}
\label{prop:hard killing conv of dist for given colour}
For all $\E\in\calB(\K)$ and $f\in C_b(\bar D)$ we have that
\begin{equation}\label{eq:hard killing measure of mNE}
\Ind(\tau^N_{\epsilon}>t)\sup_{t\leq s_1,s_2\leq t+1}\lvert ( m^{N,\E}_{s_1}-Y^{N,\E}_{s_2}\pi)(f)\rvert\ra 0\quad \text{in probability as}\quad t\wedge N\ra\infty.
\end{equation}
In particular, taking $f=1$, we have for any $\E\in\calB(\K)$ that
\begin{equation}\label{eq:hard killing number of colours in E approximates Y of E propn}
\Ind(\tau^N_{\epsilon}>t)\sup_{t\leq s_1,s_2\leq t+1}\lvert \chi^N_{s_1}(\E)-\calY^{N}_{s_2}(\E))\rvert\ra 0\quad \text{in probability as}\quad t\wedge N\ra\infty.
\end{equation}
\end{prop}

We note that $\tau^N_{\epsilon}>0$ implies a uniform positive lower bound on $\Pm_{m^N_0}(\tau_{\partial}>T)$, where we think of the empirical measure $m^N_0$ as the initial condition of a single killed Brownian motion, killed at time $\tau_{\partial}$. Using this fact, and using Theorem \ref{theo:hard killing characterisation of Y}, Theorem \ref{theo:hard killing hydrodynamic limit for multicolour process} and Proposition \ref{prop:hard killing convergence in time for killed Markov 1 in main theorem proof} in place of Theorem \ref{theo:hard killing characterisation of Y}, Therem \ref{theo:hydrodynamic limit for multicolour process} and Proposition \ref{prop:convergence in time for killed Markov 1 in main theorem proof} respectively, the proof of Proposition \ref{prop:hard killing conv of dist for given colour} is identical to the proof of Proposition \ref{prop:conv of dist for given colour} found in Subsection \ref{subsection: proof of conv of dist for given colour}.

The characterisation of $\calY^N_t$ in the setting of soft killing given in Section \ref{section: characterisation of Y} does not involve $dJ^N_t$ terms. This is a consequence of the fact that the jumps occur at a (position dependent) Poisson rate. On the other hand, as a consequence of the hard catalyst killing, the characterisation of $\calY^N_t$ given in Subsection \ref{section:hard killing characterisation of Y} does involve such terms. Consequentially we shall require the following lemma.
\begin{lem}\label{lem:convergence of jump process}
We consider a sequence of uniformly bounded processes $(Z^N_t)_{t\geq 0}$ such that \linebreak$\sup_{t\leq s\leq t+1}\lvert Z_s^N-Z_t^N\rvert\ra 0$ in probability as $t\wedge N\ra \infty$. Then, after rescaling time by $t\mapsto Nt$, we have that 
\begin{equation}
\int_0^{T}Z^N_{Ns}(\frac{1}{N}dJ_{Ns}^N-\lambda ds)\ra 0\quad\text{in probability as}\quad N\ra \infty.
\end{equation} 
In particular we have the convergence
\begin{equation}
\big(\frac{1}{N}J^N_{Nt}\big)_{0\leq t\leq T}\ra (\lambda t)_{0\leq t\leq T}\quad\text{uniformly in probability as}\quad N\ra\infty.
\end{equation}
\end{lem}

\begin{proof}[Proof of Lemma \ref{lem:convergence of jump process}]
We fix $\epsilon>0$. We firstly take $\E=\K$ in \eqref{eq:hard killing measure of mNE} and apply Proposition \ref{prop:convergence in W in prob equivalent to weakly prob} to see that $m_t^N\Ind(\tau_{\epsilon}^N>t)+\pi\Ind(\tau_{\epsilon}^N\leq t)$ converges to $\pi$ in $\Wah$ in probability. We then obtain from Theorem \ref{theo:hard killing hydrodynamic limit for multicolour process} that \begin{equation}\label{eq:jumps over time 1 stopping time is approx lambda}
\Ind(\tau^N_{\epsilon}>t)(J^N_{t+1}-J^N_t-\lambda)\ra 0\quad\text{in probability as}\quad N\wedge t\ra \infty.
\end{equation}

It follows from the proof of \cite[Proposition 4.10]{Tough2022} that there exists $M<\infty$ and $p\in (0,1)$, dependent upon neither $N$ nor $t$, such that the number of jumps of any particle between the times $t\wedge \tau_{\epsilon}^N$ and $(t+1)\wedge \tau_{\epsilon}^N$ is stochastically dominated by the sum of $M$ independent geometric random variables. It follows that $\{J^N_{(t+1)\wedge \tau_{\epsilon}^N}-J^N_{t\wedge \tau_{\epsilon}^N}:t\geq 0,N\geq 2\}$ is uniformly bounded in $L^2(\Pm)$, hence uniformly integrable.

We now calculate
\[
\begin{split}
\int_0^T\int_{Nt}^{Nt+1}Z^N_sdJ^N_sdt=
\int_0^T\int_{0}^{NT+1}\Ind(Nt\leq s\leq Nt+1)Z^N_sdJ^N_sdt\\
=\int_{0}^{NT+1}Z^N_s\int_{\frac{s-1}{N}\vee 0}^{\frac{s}{N}\wedge T}dt dJ^N_s=\int_{0}^{NT+1}\min(\frac{1}{N},\frac{s}{N},\frac{T-s-1}{N}) Z^N_s  dJ^N_s.
\end{split}
\]
We see that
\[
\begin{split}
\Ind(\tau_{\epsilon}^N>NT)\Big\lvert \frac{1}{N}\int_0^TZ^N_{Ns}dJ^N_{Ns}-\int_0^T\int_{Nt}^{Nt+1}Z^N_sdJ^N_sdt\Big\rvert\\ \leq \frac{C}{N}\Ind(\tau_{\epsilon}^N>NT)(J_1-J_0+J_{NT+1}-J_{NT})\overset{p}{\ra}
 0\quad\text{as}\quad N\ra\infty.
\end{split}
\]
It follows from the above that
\[
\begin{split}
\expE\Big[\Big\lvert\int_0^T\Ind(\tau_{\epsilon}^N>NT)\Big(\int_{Nt}^{Nt+1}Z^N_s(dJ^N_s-\lambda ds)\Big) dt\Big\rvert\Big]\\
\leq\int_0^T\expE\Big[\Big\lvert\Ind(\tau_{\epsilon}^N>NT)\int_{Nt}^{Nt+1}Z^N_s(dJ^N_s-\lambda ds)\Big\rvert\Big] dt\ra 0\quad\text{as}\quad N\ra\infty.
\end{split}
\]
Since $\epsilon>0$ is arbitrary, Lemma \ref{lem:convergence of jump process} follows from \eqref{eq:hard killing stopping time larger than NT with large prob}.
\end{proof}

We then obtain the following analogue of Proposition \ref{prop:Convergence of tilted empirical measure to FV diffusion}. 

\begin{prop}\label{prop:hard killing Convergence of tilted empirical measure to FV diffusion}
We take some deterministic initial profile $\nu^0\in\mathcal{P}(\K)$ and define $(\nu_t)_{0\leq t<\infty}$ to be a Wright-Fisher process of rate $\Theta$ and initial condition $\nu_0:=\nu^0$. We then consider a sequence of Fleming-Viot multi-colour Processes $(\vec{B}^N_t,\vec{\eta}^N_t)_{0\leq t<\infty}$. We assume that $\calY^N_0\ra \nu^0$ in $\Wat$ in probability.

We fix $T<\infty$ and rescale time by $t\mapsto Nt$. We then have the convergence
\begin{equation}\label{eq:hard killing conv of measure-valued process to FV in Weak atomic metric}
(\calY^N_{Nt})_{0\leq t\leq T}\ra (\nu_t)_{0\leq t\leq T}\quad\text{in}\quad D([0,T];\mathcal{P}_{\Wah}(\K))\quad\text{in distribution as}\quad N\ra\infty.
\end{equation}
\end{prop}

\begin{proof}[Proof of Proposition \ref{prop:hard killing Convergence of tilted empirical measure to FV diffusion}]

The proof of Proposition \ref{prop:hard killing Convergence of tilted empirical measure to FV diffusion} follows in the same two steps as that of the proof of Proposition \ref{prop:Convergence of tilted empirical measure to FV diffusion}. In both steps, we fix $\epsilon>0$ and localise up to time $\tau^N_{\epsilon}$. We then repeat the proof found in Subsection \ref{subsection:Convergence of tilted empirical measure to FV diffusion}, with Theorem \ref{theo:characterisation of Y} and Proposition \ref{prop:conv of dist for given colour} replaced by Theorem \ref{theo:hard killing characterisation of Y} and Proposition \ref{prop:hard killing conv of dist for given colour} respectively, and using Lemma \ref{lem:convergence of jump process} in the obvious manner. We then conclude each step by observing that $\epsilon>0$ is arbitrary, and applying \eqref{eq:hard killing stopping time larger than NT with large prob}.
\end{proof}

We continue as in the proof of Theorem \ref{theo:Convergence to FV diffusion}. We recall from Theorem \ref{theo:Well-posedness of WF process} that $(\nu_t)_{0\leq t\leq T}\in C([0,T];\calP_{\Wah}(\K))$ almost surely. We take a sequence $(\vec{t}^N)_{2\leq N<\infty}=((t^N_1,\ldots,t^N_n))_{2\leq t\leq N}$ converging to $\vec{t}=(t^1,\ldots,t^n)$ as in the statement of Theorem \ref{theo:hard killing Convergence to FV diffusion}. It then follows that
\[
(\calY^N_{Nt_1^N},\ldots,\calY^N_{Nt_n^N})\ra (\nu_{t_1},\ldots,\nu_{t_n})\quad\text{in}\quad (\calP_{\Wah}(\K))^n\quad\text{in distribution as}\quad N\ra\infty.
\]
Recalling the positivity and boundedness of $\phi$ from Theorem \ref{theo:hard killing convergence to QSD for reflected diffusion with soft killing}, and the definition \eqref{eq:stopping time tauNepsilon} of $\tau^N_{\epsilon}$, we observe that for every $\epsilon>0$ there exists a uniform constant $C_{\epsilon}<\infty$ and random measures $\delta_t^{N,\epsilon}$ for $0\leq t<\tau^N_{\epsilon}$ and $N\in \Nm$ such that
\begin{equation}\label{eq:hard killing bounding chi by Y}
\chi^N_{t}\leq C_{\epsilon}\calY^N_{t}+\delta_t^{N,\epsilon}\quad\text{and}\quad \delta_t^{N,\epsilon}(\K)\leq \epsilon\quad\text{for all}\quad 0\leq t<\tau^N_{\epsilon},\; N\geq 2.
\end{equation}
Note that the term $\delta_t^{N,\epsilon}$ does not appear in the soft killing case, \eqref{eq:bounding chi by Y}. It arises here from the fact that $\phi$ is no longer bounded from below, but instead vanishes at the boundary.

We now fix $1\leq k\leq n$. Since $(\calY^N_{Nt^N_k})_{N\geq 1}$ is a tight sequence of random measures, it follows from \eqref{eq:hard killing bounding chi by Y} that $(\chi^N_{Nt^N_k})_{N\geq 1}$ must also be a tight sequence of random measures. It therefore follows from \eqref{eq:hard killing number of colours in E approximates Y of E propn} and Lemma \ref{lem:tight sequences of measures converging to same limit lemma} that $\Wah(\calY^N_{Nt^N_k},\chi^N_{Nt^N_k})\ra 0$ as $N\ra\infty$. Thus we have established that
\[
(\chi^N_{Nt_1^N},\ldots,\chi^N_{Nt_n^N})\ra (\nu_{t_1},\ldots,\nu_{t_n})\quad\text{in}\quad (\calP_{\Wah}(\K))^n\quad\text{in distribution as}\quad N\ra\infty.
\]
We have left only to strengthen the notion of convergence to convergence in the weak atomic metric. This is accomplished with the following analogue of Proposition \ref{prop:prop for strengthening to weak atomic metric}.
\begin{prop}\label{prop:hard killing prop for strengthening to weak atomic metric}
We recall that $\Psi(u):=(1-u)\vee 0$ is the function used to define the $\Wat$ metric in Appendix \ref{appendix:Weak atomic metric}. We fix $\epsilon>0$, thereby defining the stopping defined $\tau_{\epsilon}^N$ by \eqref{eq:stopping time tauNepsilon}. For all $\delta>0$ there exists $\epsilon'>0$ such that
\[
\liminf_{N\ra\infty}\Pm\Big(\tau_{\epsilon}^N>NT,\sup_{0\leq t\leq T}\sum_{\substack{k,\ell\in \K\\ k\neq \ell}}\chi^{N}_{Nt}(\{k\})\chi^{N}_{Nt}(\{\ell\})\Psi\Big(\frac{d(k,\ell)}{\epsilon'}\Big)\leq \delta+\epsilon\Big)\geq 1-\delta.
\]
Note that the above sum is well-defined as the terms are non-zero only for $k,\ell\in \text{supp}(\chi^{N}_0)$.
\end{prop}

\begin{proof}[Proof of Proposition \ref{prop:hard killing prop for strengthening to weak atomic metric}]
We follow the same proof strategy as the proof of Proposition \ref{prop:prop for strengthening to weak atomic metric}, replacing Theorem \ref{theo:characterisation of Y} with Theorem \ref{theo:hard killing characterisation of Y}, applying Lemma \ref{lem:convergence of jump process} in the obvious manner, and replacing the supermartingale \eqref{eq:supermartingale for weak atomic metric proof} with
\[
e^{-C\big(t+\frac{J^N_{Nt\wedge \tau_{\epsilon}^N}}{N}\big)}\sum_{\substack{k,\ell\in \K\\ k\neq \ell}}Y^{N,\{k\}}_{Nt\wedge \tau_{\epsilon}^N}Y^{N,\{\ell\}}_{Nt\wedge \tau_{\epsilon}^N}\Psi\Big(\frac{d(k,\ell)}{\epsilon}\Big),
\]
for some sufficiently large constant $C<\infty$.
\end{proof}
Having established Proposition \ref{prop:hard killing prop for strengthening to weak atomic metric}, we may then apply Lemma \ref{lem:relatively compact family of measures in weak atomic topology} along with \eqref{eq:hard killing stopping time larger than NT with large prob} to conclude that $\{\Law(\chi^{N}_{Nt_k^N})\}$ is tight in $\calP(\calP_{\Wat}(\K))$ for all $1\leq k\leq n$, so that we have Theorem \ref{theo:Convergence to FV diffusion}. 
\qed

\begin{appendix}
\section{Reflected diffusions with soft killing}\label{appendix:reflected diffusions}

\subsection{Definition}

We consider a normally reflected diffusion $(X^0_t)_{0\leq t<\infty}$ in the domain $\bar D$ corresponding to a solution of the Skorokhod problem. In particular, for any filtered probability space on which is defined the $m$-dimensional Brownian motion $W_t$ and initial condition $x\in\bar D$, there exists by \cite[Theorem 3.1]{Lions1984a} a pathwise unique strong solution of the Skorokhod problem
\begin{equation}\label{eq:Skorokhod problem}
\begin{split}
X^0_t=x+\int_0^tb(X^0_s)ds+\int_0^t\sigma(X^0_s)dW_s+\int_0^t\vec{n}(X^0_s)d\xi_s\in \bar D,\quad 0\leq t<\infty,\quad \int_0^{\infty}\Ind_D(X^0_t)d\xi_t=0,
\end{split}
\end{equation} 
where $W_s$ is a Brownian motion and the local time $\xi_t$ is a non-decreasing process with $\xi_0=0$. 

This corresponds to a solution of the submartingale problem introduced by Stroock and Varadhan \cite{Stroock1971}, and is a Feller process \cite[Theorem 5.8, Remark 2]{Stroock1971} (and hence strong Markov). It is then straightforward (using a separate probability space on which is defined an exponential random variable) to construct an enlarged filtered probability space on which $(X^0,W,\xi)$ is a solution of the Skorokhod problem and on which there is a stopping time $\tau_{\partial}$ corresponding to the ringing time of a Poisson clock with position dependent rate $\kappa(X^0_t)$, from which is constructed the killed process $(X_t)_{0\leq t<\tau_{\partial}}$. This killed process is a solution to
\begin{equation}\label{eq:killed process SDE}
\begin{split}
\Ind(t\geq \tau_{\partial})-\int_0^{t\wedge \tau_{\partial}}\kappa(X^0_s)ds\quad\text{is a martingale},\quad
X_t:=\begin{cases}
X^0_t,\quad t<\tau_{\partial}\\
0,\quad t\geq \tau_{\partial}
\end{cases},
\end{split}
\end{equation} 
where $W_s$ is an $m$-dimensional Brownian motion and the local time $\xi_t$ is a non-decreasing process with $\xi_0=0$. Since $X^0_t$ is Feller, the process $X_t$ is therefore also Feller (and hence strong Markov). 

We write $L^0/L=L^0-\kappa$ for the infinitesimal generators of $X^0$ and $X$ respectively, having the same domains $\mathcal{D}(L^0)=\mathcal{D}(L)$. We further define the Carre du Champs operator $\Gamma_0$ on the algebra $\calA$,
\begin{equation}\label{eq:Carre du champs}
\begin{split}
\Gamma_0(f,g):=L_0(fg)-fL_0(f)-gL_0(f),\quad \Gamma_0(f):=\Gamma_0(f,f),\\
f,g\in \calA:=\{f\in C^2(\bar D):\vec{n}\cdot \nabla f\equiv 0\quad\text{on}\quad \partial D\},
\end{split}
\end{equation}
so that for $f\in \calA$ we have
\begin{equation}\label{eq:quadratic variation from Carre du champs appendix}
[f(X^0)]_t=\int_0^t\Gamma_0(f)(X^0_s)ds.
\end{equation}

\subsection{Convergence to a unique quasi-stationary distribution}\label{appendix:convergence to a QSD}

\begin{theo}\label{theo:convergence to QSD for reflected diffusion with soft killing}
There exists a unique quasi-stationary distribution (QSD) $\pi\in \mathcal{P}(\bar D)$ for $X_t$. Moreover there exist constants $C<\infty$ and $k>0$ such that
\begin{equation}\label{eq:exponential convergence to QSD reflected diffusion}
\lvert\lvert \mathcal{L}_{\mu}(X_t\lvert \tau_{\partial}>t)-\pi\rvert\rvert_{\TV}\leq Ce^{-kt}\quad\text{for all}\quad\mu\in\mathcal{P}(\bar D)\quad\text{and}\quad t\geq 0.
\end{equation}
Furthermore $\pi$ is a left eigenmeasure of $L$ with eigenvalue $-\lambda<0$,
\begin{equation}\label{eq:pi left eigenmeasure}
\langle \pi, Lf\rangle=-\lambda \langle \pi,f\rangle,\quad f\in\calD(L),
\end{equation}
and with corresponding positive right eigenfunction $\phi\in \calA\cap C^{2}(\bar D; \Rm_{>0})$. This right eigenfunction is both the unique non-negative right eigenfunction and the unique right eigenfunction of eigenvalue $-\lambda$, up to rescaling. 
\end{theo}

\begin{proof}[Proof of Theorem \ref{theo:convergence to QSD for reflected diffusion with soft killing}]
Our strategy is to check \cite[Assumption (A)]{Champagnat2014}, starting with \cite[Assumption (A1)]{Champagnat2014}. 

We fix arbitrary $t_0>0$. It follows from the boundedness of the killing rate $\kappa$ and the parabolic Harnack inequality that there exists $c_0>0$ and $\nu\in \calP(\bar D)$ such that
\[
\Law_x(X_{t_0}\lvert \tau_{\partial}>t_0)\geq c_0\nu\quad\text{for all}\quad x\in \bar D.
\]
Thus \cite[Assumption (A1)]{Champagnat2014} is satisfied. We now turn to checking \cite[Assumption (A2)]{Champagnat2014}.

In \cite[p.6]{Schwab2005} they use the Krein-Rutman theorem to prove that there exists $\phi\in \calA\cap C^{2}(\bar D; \Rm_{>0})$ and $\lambda\in \Rm$ such that
\[
L\phi+\lambda\phi=0\quad\text{on}\quad \bar D,\quad \phi>0\quad\text{on}\quad \bar D,\quad \nabla\phi\cdot\vec{n}=0\quad\text{on}\quad \partial D.
\]

We see that $e^{\lambda t}\phi(X_t)\Ind(t<\tau_{\partial})$ is a martingale, so that
\begin{equation}\label{eq:integral of right eigenfunction against dist}
\langle P_{t}(\mu,\cdot),\phi\rangle= e^{-\lambda t}\langle\mu,\phi\rangle\quad \text{for all}\quad \mu\in \calP(\bar D).
\end{equation}
Therefore by \eqref{eq:integral of right eigenfunction against dist} we have for all $\mu\in\calP(\bar D)$ that
\begin{equation}\label{eq:lower bound on Pnu of killing in time t proof for reflected diffusion}
\Pm_{\mu}(t<\tau_{\partial})\geq \frac{\langle P_{t}(\mu,\cdot),\phi\rangle}{\sup_{x'\in \bar D}\phi(x')}= \frac{e^{-\lambda t}\langle \mu,\phi\rangle}{\sup_{x'\in \bar D}\phi(x')}\geq \frac{\inf_{x'\in \bar D}\phi(x')}{\sup_{x'\in \bar D}\phi(x')}e^{-\lambda t}.
\end{equation}
Similarly, \eqref{eq:integral of right eigenfunction against dist} gives that for all $x\in \bar D$ we have
\begin{equation}\label{eq:bound of Px killing in time t proof for reflected diffusion}
\Pm_{x}(t<\tau_{\partial})\leq \frac{\langle P_{t}(x,\cdot),\phi\rangle}{\inf_{x'\in \bar D}\phi(x')}\leq \frac{\sup_{x'\in \bar D}\phi(x')}{\inf_{x'\in \bar D}\phi(x')}e^{-\lambda t}.
\end{equation}
Therefore we have
\[
\Pm_{x}(t<\tau_{\partial})\leq \Big(\frac{\sup_{x'\in \bar D}\phi(x')}{\inf_{x'\in \bar D}\phi(x')}\Big)^2\Pm_{\mu}(t<\tau_{\partial}),\quad \text{for all}\quad t\geq 0\quad\text{and}\quad x\in \bar D.
\]

Thus we have verified \cite[Assumption (A)]{Champagnat2014}, so that \cite[Theorem 1.1]{Champagnat2014} implies the existence of a quasi-stationary distribution $\pi\in\calP(\bar D)$ satisfying \eqref{eq:exponential convergence to QSD reflected diffusion}, which must be the unique quasi-stationary distribution. Moreover the uniqueness of $\phi$ up to renormalisation, both as a non-negative right eigenfunction and eigenfunction of eigenvalue $-\lambda$, is given by  \cite[Corollary 2.4]{Champagnat2014}. Finally, the QSD $\pi$ corresponds to the left eigenmeasure of $L$ for some eigenvalue $-\lambda'<0$ by \cite[Proposition 4]{Meleard2011}. This eigenvalue must be equal to $-\lambda$, since
$-\lambda'\langle \pi,\phi\rangle=\langle \pi, L\phi\rangle=-\lambda \langle \pi,\phi\rangle$, so that we have \eqref{eq:pi left eigenmeasure}.
\end{proof}

\section{Proof of Proposition \ref{prop:convergence in time for killed Markov 1 in main theorem proof}}

We recall that $\phi$ is normalised so that $\langle \pi,\phi\rangle =1$. We take $(x^i,\eta^i)_{1\leq i\leq n}\in (\bar D\times \K)^n$ and calculate
\begin{equation}\label{eq:rewriting of dist of (X,eta) process}
\begin{split}
\Law_{\frac{1}{n}\sum_{i=1}^n\delta_{(x^i,\eta^i)}}((X_s,\eta_s)\lvert \tau_{\partial}>s)= \frac{\frac{1}{n}\sum_{i=1}^n\Pm_{x^i}(\tau_{\partial}>s)\Law_{x^i}(X_s\lvert \tau_{\partial}>s)\otimes\delta_{\eta^i}}{\frac{1}{n}\sum_{i=1}^n\Pm_{x^i}(\tau_{\partial}>s)}\\
= \frac{\frac{1}{n}\sum_{i=1}^ne^{\lambda s}\Pm_{x^i}(\tau_{\partial}>s)\Law_{x^i}(X_s\lvert \tau_{\partial}>s)\otimes\delta_{\eta^i}}{\frac{1}{n}\sum_{i=1}^ne^{\lambda s}\Pm_{x^i}(\tau_{\partial}>s)}.
\end{split}
\end{equation}
It then follows from the triangle inequality that
\[
\begin{split}
\Big\lvert\Big\lvert\frac{1}{n}\sum_{i=1}^ne^{\lambda s}\Pm_{x^i}(\tau_{\partial}>s)\Law_{x^i}(X_s\lvert \tau_{\partial}>s)\otimes\delta_{\eta^i}-\frac{1}{n}\sum_{i=1}^n\phi(x^i)\pi\otimes \delta_{\eta^i}\Big\rvert\Big\rvert_{\TV}
\\\leq 
\Big\lvert\Big\lvert\frac{1}{n}\sum_{i=1}^n[e^{\lambda s}\Pm_{x^i}(\tau_{\partial}>s)-\phi(x^i)]\Law_{x^i}(X_s\lvert \tau_{\partial}>s)\otimes\delta_{\eta^i}\Big\rvert\Big\rvert_{\TV}\\+\Big\lvert\Big\lvert\frac{1}{n}\sum_{i=1}^n\phi(x^i)[\Law_{x^i}(X_s\lvert \tau_{\partial}>s)\otimes\delta_{\eta^i}-\pi\otimes \delta_{\eta^i}]\Big\rvert\Big\rvert_{\TV}\\
\leq \frac{1}{n}\sum_{i=1}^n\lvert e^{\lambda s}\Pm_{x^i}(\tau_{\partial}>s)-\phi(x^i)\rvert+\frac{1}{n}\sum_{i=1}^n\phi(x^i)\lvert\lvert \Law_{x^i}(X_s\lvert \tau_{\partial}>s)\otimes\delta_{\eta^i}-\pi\otimes \delta_{\eta^i}\rvert\rvert_{\TV}.
\end{split}
\]
We can apply \cite[Theorem 2.1]{Champagnat2017} by Theorem \ref{theo:convergence to QSD for reflected diffusion with soft killing}. It follows from Theorem \ref{theo:convergence to QSD for reflected diffusion with soft killing} and \cite[Theorem 2.1]{Champagnat2017} that there exists $\epsilon_t\ra 0$ such that, for any $n<\infty$ and $(x_i,\eta_i)_{1\leq i\leq n}\in (D\times \K)^n$, we have that
\[
\Big\lvert\Big\lvert\frac{1}{n}\sum_{i=1}^ne^{\lambda s}\Pm_{x^i}(\tau_{\partial}>s)\Law_{x^i}(X_s\lvert \tau_{\partial}>s)\otimes\delta_{\eta^i}-\frac{1}{n}\sum_{i=1}^n\phi(x^i)\pi\otimes \delta_{\eta^i}\Big\rvert\Big\rvert_{\TV}\leq \epsilon_t\frac{1}{n}\sum_{i=1}^n \phi(x^i).
\]
We apply this to both the numerator and denominator of the right hand side of \eqref{eq:rewriting of dist of (X,eta) process} to obtain \eqref{eq:convergence in time for killed Markov 1 in intro}.
\qed

\section{Spaces of measures}\label{appendix:spaces of measures}

For a given topological space ${\bf S}$ we write $\calB({\bf S})$ for the Borel $\sigma$-algebra on ${\bf S}$, and write $\calP({\bf S})$ for the space of probability measures on $\calB({\bf S})$, equipped with the topology of weak convergence of measures. We write $\mathcal{M}({\bf S})$ for the space of all bounded Borel measurable functions on ${\bf S}$. 

\subsection{The Wasserstein metric}\label{appendix:Wasserstein metric}

For general separable metric spaces $({\bf S},d)$ we let $\Wah$ denote the Wasserstein-$1$ metric on $\mathcal{P}({\bf S})$ generated by the metric $d\wedge 1$, which metrises $\calP({\bf S})$ \cite[Theorem 6]{Gibbs2002}. We write $\mathcal{P}_{\Wah}({\bf S})$ for the metric space $(\mathcal{P}({\bf S}),\Wah)$. The following therefore follows from the Skorokhod representation theorem.
\begin{prop}\label{prop:convergence in W in prob equivalent to weakly prob}
Let $({\bf S},d)$ be a separable metric space. Let $(\mu_n)_{n\geq 1}$ be a sequence of $\calP({\bf S})$-valued random measures, and $\mu\in\calP({\bf S})$ a deterministic measure. Then the following are equivalent:
\begin{enumerate}
\item $\Wah(\mu_n,\mu)\overset{p}{\ra} 0$ as $n\ra\infty$;\label{enum:equiv prop conv in W}
\item $\mu_n(f)\overset{p}{\ra} \mu(f)$ as $n\ra\infty$ for all $f\in C_b({\bf S})$.\label{enum:equiv prop conv against test fns}
\end{enumerate}
\end{prop}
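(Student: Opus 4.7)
The cleanest route is via the subsequence principle, used in both directions: a sequence of random variables $Y_n$ converges to $Y$ in probability if and only if every subsequence admits a further subsequence along which $Y_{n_j} \to Y$ almost surely. The two key external inputs are (a) that $\Wah$ metrizes weak convergence on $\calP(\bf S)$ when $\bf S$ is separable metric, which is already cited as \cite[Theorem 6]{Gibbs2002}, and (b) that for a separable metric space $\bf S$ there exists a countable family $\{f_k\}_{k\geq 1}\subset C_b(\bf S)$ which is convergence-determining for weak convergence.

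For the implication \ref{enum:equiv prop conv in W} $\Rightarrow$ \ref{enum:equiv prop conv against test fns}, fix $f\in C_b(\bf S)$. Given any subsequence $n_j$, apply the subsequence principle to $\Wah(\mu_{n_j},\mu)$ to extract $n_{j_l}$ with $\Wah(\mu_{n_{j_l}},\mu)\to 0$ almost surely. By (a) this gives $\mu_{n_{j_l}}\Rightarrow \mu$ weakly a.s., hence $\langle \mu_{n_{j_l}},f\rangle \to \langle \mu,f\rangle$ a.s. by the Portmanteau theorem. A second application of the subsequence principle converts this into convergence in probability.

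For the harder implication \ref{enum:equiv prop conv against test fns} $\Rightarrow$ \ref{enum:equiv prop conv in W}, fix a countable convergence-determining family $\{f_k\}\subset C_b(\bf S)$. Given a subsequence $n_j$, apply the subsequence principle to $\langle \mu_{n_j},f_1\rangle$ to extract an a.s.\ convergent subsequence, then a further subsequence on which $\langle \cdot,f_2\rangle$ converges a.s., and so on; a standard diagonal extraction yields a single subsequence $n_{j_l}$ along which $\langle \mu_{n_{j_l}},f_k\rangle \to \langle \mu,f_k\rangle$ a.s.\ for \emph{every} $k$ simultaneously. On the resulting event of full probability the convergence-determining property forces $\mu_{n_{j_l}}\Rightarrow \mu$ weakly, and then (a) gives $\Wah(\mu_{n_{j_l}},\mu)\to 0$ a.s. A final invocation of the subsequence principle completes the proof.

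\textbf{Main obstacle.} The nontrivial ingredient is the existence of the countable convergence-determining family $\{f_k\}$. Since $C_b(\bf S)$ is typically non-separable, one cannot simply take a dense subset. The cleanest way is to work with the bounded-Lipschitz metric $d_{\text{BL}}(\mu,\nu):=\sup\{|\langle\mu-\nu,f\rangle|:\|f\|_\infty\vee\text{Lip}(f)\leq 1\}$, which also metrizes weak convergence on $\calP(\bf S)$ for $\bf S$ separable (Dudley); the closed unit ball of bounded $1$-Lipschitz functions with respect to $d\wedge 1$ is uniformly bounded and equicontinuous, and separability of $\bf S$ yields a countable $\|\cdot\|_\infty$-dense subset of this unit ball, which then serves as the required $\{f_k\}$. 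An equally acceptable explicit choice is the countable family generated (via finite products and rational linear combinations) by $f_{k,m}(x):=\bigl(1-m\,(d(x,x_k)\wedge 1)\bigr)_+$, with $\{x_k\}$ a countable dense subset of $\bf S$ and $m\in\Nm$; these functions separate points, contain an approximate identity for indicators of open balls, and one verifies convergence-determination directly through the Portmanteau theorem by a standard $\varepsilon/3$ argument approximating arbitrary $f\in C_b(\bf S)$ uniformly on compacts (together with a tightness step controlling the mass at infinity, which here is automatic because we test against a deterministic limit $\mu$).
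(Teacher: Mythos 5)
Your argument is correct in its overall architecture but takes a genuinely different route from the paper's. For \ref{enum:equiv prop conv in W} $\Rightarrow$ \ref{enum:equiv prop conv against test fns} the paper applies Skorokhod's representation theorem to the $\mu_n$ viewed as random elements of $\calP_{\Wah}({\bf S})$ (convergence in probability to a deterministic limit being convergence in distribution), which is morally the same device as your subsequence principle. For the converse, however, the paper avoids any countable convergence-determining class: it first shows $\mu_n(A)\overset{p}{\ra}\mu(A)$ for $\mu$-continuity sets $A$ by sandwiching $\Ind_A$ between continuous functions, then partitions ${\bf S}$ into finitely many continuity sets of diameter $<\epsilon$ plus a remainder of $\mu$-mass $<\epsilon$, and bounds $\Wah(\mu_n,\mu)$ directly by $\epsilon$ plus the finitely many mass discrepancies. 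That is more elementary and entirely self-contained; your route is shorter and cleaner once the classical lemma (existence of a countable convergence-determining family of bounded continuous functions on a separable metric space; Parthasarathy/Billingsley) is granted, at the price of importing that lemma.

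The one place where your write-up actually breaks is the first of your two proposed justifications of that lemma: the closed unit ball of bounded $1$-Lipschitz functions for $d\wedge 1$ is \emph{not} in general $\lVert\cdot\rVert_{\infty}$-separable. Take ${\bf S}=\Nm$ with the $0$--$1$ metric: that ball contains every function ${\bf S}\ra[0,1]$, i.e.\ a copy of $[0,1]^{\Nm}$ in the sup norm, which is non-separable. Uniform boundedness plus equicontinuity plus separability of the domain gives compactness only for uniform convergence on compact subsets, not sup-norm precompactness. Your second, explicit construction is the right one, though the standard verification does not go through Stone--Weierstrass on compacts plus a tightness step (tightness of a single Borel probability measure is \emph{not} automatic on a merely separable, non-complete metric space); rather one uses functions increasing pointwise to indicators of open balls to get $\liminf_n\mu_n(G)\geq\mu(G)$ for open $G$ and concludes by the Portmanteau theorem, which needs no tightness. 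So either keep route (b) in that corrected form or simply cite the classical theorem, and drop route (a).
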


We similarly obtain the following lemma.
\begin{lem}\label{lem:tight sequences of measures converging to same limit lemma}
Let $({\bf S},d)$ be a separable metric space. Let $(\mu^{(i)}_n)_{n\geq 1}$ for $i=1,2$ be tight sequences of $\calP({\bf S})$-valued random measures, with $\mu^{(1)}_n$ and $\mu^{(2)}_n$ defined on the same probability space, for all $n$. We suppose that 
\begin{equation}\label{eq:mu1(A)ra mu2(A) in lemma}
\lvert \mu^{(1)}(A)-\mu^{(2)}_n(A)\rvert \overset{p}{\ra} 0\quad\text{as}\quad  n\ra \infty \quad\text{for all} \quad A\in \calB(\bfS).
\end{equation}
Then $\Wah(\mu^{(1)}_n,\mu^{(2)}_n)\overset{p}{\ra} 0$ as $n\ra\infty$.
\end{lem}

\begin{proof}[Proof of Lemma \ref{lem:tight sequences of measures converging to same limit lemma}]
Since $((\mu^{(1)}_n,\mu^{(2)}_n):1\leq n<\infty)$ is a tight sequence of random measures, we may consider arbitrary subequential limits to which we apply Skorokhod's representation theorem. Then on this new probability space and along the subsequential limit we have $(\mu^{(1)}_{n_k},\mu^{(2)}_{n_k})\ra (\mu^{(1)},\mu^{(2)})$ in $\calP(\bfS)\times \calP(\bfS)$ as $k\ra\infty$. We now use \eqref{eq:mu1(A)ra mu2(A) in lemma} to conclude that $\mu^{(1)}=\mu^{(2)}$ almost surely, from which we conclude Lemma \ref{lem:tight sequences of measures converging to same limit lemma}.
\end{proof}

\subsection{The Weak Atomic Metric}\label{appendix:Weak atomic metric}

Convergence in our scaling limit is given in terms of the weak atomic metric, introduced by Ethier and Kurtz in \cite{Ethier1994}. We shall define the weak atomic metric on the colour space $(\K,d)$ (which we recall is assumed to be a complete, separable metric space). We write $\calP_{\Wat}(\K)$ for $\calP(\K)$ equipped with the metric $\Wat$.

In \cite{Ethier1994}, Ethier and Kurtz defined the weak atomic metric on the space of all finite, positive, Borel measures, whereas we restrict our attention to probability measures on $\K$. We fix $\Psi(u)=(1-u)\vee 0$ and define the weak atomic metric to be
\begin{equation}
\begin{split}
\Wat(\mu,\nu):=\Wah(\mu,\nu)\\
+\sup_{0<\epsilon\leq 1}\Big\lvert \int_{\K}\int_{\K}\Psi\Big(\frac{d(x,y)}{\epsilon}\Big)\mu(dx)\mu(dy)-\int_{\K}\int_{\K}\Psi\Big(\frac{d(x,y)}{\epsilon}\Big)\nu(dx)\nu(dy)\Big\rvert.
\end{split}
\label{eq:Weak-atomic metric}
\end{equation}

In \cite{Ethier1994} they used the Levy-Prokhorov metric instead of the $\Wah$-metric, and let $\Psi$ be an arbitrary continuous, non-decreasing function such that $\Psi(0)=1$ and $\Psi(1)=0$. We make the above choices for simplicity (note that $\Wah$ is equivalent to the Levy-Prokhorov metric \cite[Theorem 2]{Gibbs2002}). Convergence in the weak atomic metric is equivalent to weak convergence of measures and convergence of the location and sizes of the atoms.
\begin{lem}[Lemma 2.5, \cite{Ethier1994}]\label{lem:Weak atomic metric = conv of atoms and weak}
Consider a sequence of probability measures $(\mu_n)_{n=1}^{\infty}$ and a probability measure $\mu$, all in $\calP(\K)$. The following are equivalent:
\begin{enumerate}
    \item $\Wat(\mu_n,\mu)\ra 0$ as $n\ra\infty$.
    \item We have both of the following:
    \begin{enumerate}
        \item $\Wah(\mu_n,\mu)\ra 0$ as $n\ra\infty$;\label{eq:Wah convergence equivalence to Wat lemma}
        \item there exists an ordering of the atoms $\{\alpha_i\delta_{x_i}\}$ of $\mu$ and the atoms $\{\alpha^n_i\delta_{x^n_i}\}$ of $\mu_n$ so that $\alpha_1\geq \alpha_2\geq \ldots$ and $\lim_{n\ra\infty}(\alpha^n_i,x^n_i)=(\alpha_i,x_i)$ for all $i$.
    \end{enumerate}
\end{enumerate}
\end{lem}
\begin{rmk}
Note that \eqref{eq:Wah convergence equivalence to Wat lemma} is equivalent to $\mu_n\ra \mu$ weakly by Proposition \ref{prop:convergence in W in prob equivalent to weakly prob}.
\end{rmk}

Thus measures are close in the weak atomic metric if and only if they are both close in the Wasserstein-$1$ metric $\Wah$ and have similar atoms. For instance $\frac{1}{2}\text{Leb}_{[0,1]}+\frac{1}{2}\delta_{\frac{1}{2}}$ is close in the weak atomic metric to $(\frac{1}{2}-\epsilon)\text{Leb}_{[0,1]}+(\frac{1}{2}+\epsilon)\delta_{\frac{1}{2}+\epsilon}$ (for small $\epsilon>0$) but not to $\frac{1}{2}\text{Leb}_{[0,1]}+(\frac{1}{4}\delta_{\frac{1}{2}-\epsilon}+\frac{1}{4}\delta_{\frac{1}{2}+\epsilon})$ nor to $\frac{1}{3}\text{Leb}_{[0,1]}+\frac{2}{3}\delta_{\frac{1}{2}}$.

We note by \cite[p.5]{Ethier1994} that $\calB(\calP(\K))=\calB(\calP_{\Wat}(\K))$, so that probability measures in $\calP(\calP_{\Wat}(\K))$ are probability measures in $\calP(\calP(\K))$ and vice-versa. It will be useful to be able to characterise tightness in both $\calP(\calP_{\Wat}(\K))$ and $\calP(D([0,T];\calP_{\Wat}(\K)))$. 

Ethier and Kurtz established in \cite[Lemma 2.9]{Ethier1994} the following tightness criterion.
\begin{lem}[Lemma 2.9, \cite{Ethier1994}]\label{lem:relatively compact family of measures in weak atomic topology}
Consider a sequence of measures $(\mu_n)_{n=1}^{\infty}$ in $\calP(\calP(\K))$. Then the following are equivalent:
\begin{enumerate}
    \item $(\mu_n)_{n=1}^{\infty}$ is tight in $\calP(\calP_{\Wat}(\K))$.
    \item $(\mu_n)_{n=1}^{\infty}$ is tight in $\calP(\calP_{\Wah}(\K))$ and we also have
    \begin{equation}\label{eq:Compact containment condition}
    \sup_n\expE\Big[\int_{\K}\int_{\K}\Psi\Big(\frac{d(x,y)}{\epsilon}\Big)\Ind(x\neq y)\mu_n(dx)\mu_n(dy)\Big]\ra 0\quad\text{as}\quad \epsilon\ra 0.
    \end{equation}
\end{enumerate}
\end{lem}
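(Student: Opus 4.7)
\medskip

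\noindent\textbf{Proof proposal.} Since the statement is the tightness criterion of \cite[Lemma 2.9]{Ethier1994}, the plan is to reproduce their argument, whose backbone is Prokhorov's theorem combined with a characterisation of $\Wat$-relative compactness on $\calP(\K)$ (itself a separate lemma in \cite{Ethier1994}). Concretely, the key analytic fact I will need is: a set $A\subseteq \calP(\K)$ is relatively compact in $\calP_{\Wat}(\K)$ if and only if it is relatively compact in $\calP_{\Wah}(\K)$ and
\[
\sup_{\nu\in A}\int_{\K}\int_{\K}\Psi\Big(\frac{d(x,y)}{\epsilon}\Big)\Ind(x\neq y)\,\nu(dx)\nu(dy)\;\longrightarrow\; 0\quad\text{as}\quad \epsilon\downarrow 0.
\]
I will quote this from \cite{Ethier1994}; the rest is a routine tightness argument.

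For the implication (1)$\Rightarrow$(2), I pick for each $\eta>0$ a set $K_\eta\subseteq\calP_{\Wat}(\K)$, compact in $\Wat$, with $\mu_n(K_\eta)\geq 1-\eta$ for every $n$. Since $\Wah\leq \Wat$ the inclusion $\calP_{\Wat}(\K)\hookrightarrow\calP_{\Wah}(\K)$ is continuous, so $K_\eta$ is also compact in $\calP_{\Wah}(\K)$, giving tightness of $(\mu_n)$ in $\calP_{\Wah}(\K)$. For the compact containment condition, writing $I_\epsilon(\nu)$ for the double integral in \eqref{eq:Compact containment condition}, I split
\[
\expE[I_\epsilon(\mu_n)]\;\leq\;\sup_{\nu\in K_\eta}I_\epsilon(\nu)\;+\;\eta,
\]
using $0\leq \Psi\leq 1$. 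By the characterisation quoted above the first term tends to $0$ as $\epsilon\downarrow 0$, and then letting $\eta\downarrow 0$ yields \eqref{eq:Compact containment condition}.

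For the harder direction (2)$\Rightarrow$(1), fix $\eta>0$. Tightness in $\calP_{\Wah}(\K)$ furnishes a $\Wah$-compact set $K'_\eta$ with $\Pm(\mu_n\in K'_\eta)\geq 1-\eta/2$ for all $n$. Using \eqref{eq:Compact containment condition}, I choose $\epsilon_k\downarrow 0$ together with $M_k\downarrow 0$ such that $\sup_n\expE[I_{\epsilon_k}(\mu_n)]\leq \eta M_k/2^{k+1}$, and set
\[
A_\eta:=\{\nu\in\calP(\K):I_{\epsilon_k}(\nu)\leq M_k\text{ for all }k\geq 1\}.
\]
Markov's inequality and a union bound give $\Pm(\mu_n\notin A_\eta)\leq \eta/2$. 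The set $K_\eta:=\overline{K'_\eta\cap A_\eta}^{\Wat}$ is then $\Wat$-relatively compact by the characterisation (it is $\Wah$-relatively compact inside $K'_\eta$ and the functionals $I_{\epsilon_k}$ are $\Wat$-continuous, so the uniform bound $I_{\epsilon_k}\leq M_k$ passes to the $\Wat$-closure), while $\Pm(\mu_n\in K_\eta)\geq 1-\eta$. Prokhorov's theorem then yields tightness of $(\mu_n)$ in $\calP(\calP_{\Wat}(\K))$.

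The only point requiring care, and the one I expect to be the main obstacle, is the lower-semicontinuity / continuity of $\nu\mapsto I_{\epsilon}(\nu)$ with respect to $\Wat$ needed to conclude that the $\Wat$-closure of $K'_\eta\cap A_\eta$ still satisfies the uniform integral bound. This is handled in \cite{Ethier1994} by exploiting the specific form of $\Psi$ (Lipschitz, bounded, vanishing at $\infty$) together with the atom-convergence part of Lemma~\ref{lem:Weak atomic metric = conv of atoms and weak}: along a $\Wat$-convergent sequence $\nu_m\to\nu$, both the non-atomic contribution (controlled by $\Wah$) and the atomic contribution (controlled by the convergence of atom sizes and locations) pass to the limit.
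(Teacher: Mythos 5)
Your proposal is correct, but it is worth noting that the paper does not prove this lemma at all: it is imported verbatim (up to a rephrasing) from Ethier and Kurtz, the only accompanying remark being that the restated version is ``straightforwardly'' equivalent to \cite[Lemma 2.9]{Ethier1994} for families of probability measures. What you have done instead is reconstruct the standard proof from the underlying deterministic ingredient --- the characterisation of relative compactness of subsets of $\calP_{\Wat}(\K)$ (relatively compact in $\Wah$ plus the uniform vanishing of $I_\epsilon$) --- via Markov's inequality and a union bound. That reconstruction is sound: in (1)$\Rightarrow$(2) the split $\expE[I_\epsilon(\mu_n)]\leq \sup_{\nu\in K_\eta}I_\epsilon(\nu)+\eta$ works because $0\leq I_\epsilon\leq 1$, and in (2)$\Rightarrow$(1) the set $K'_\eta\cap A_\eta$ does satisfy the compactness criterion, provided you also observe that $\epsilon\mapsto I_\epsilon(\nu)$ is nondecreasing (since $\Psi$ is nonincreasing), so that the bounds $I_{\epsilon_k}\leq M_k$ along the sequence $\epsilon_k\downarrow 0$ control $\sup_{\nu\in A_\eta}I_\epsilon(\nu)$ for \emph{all} small $\epsilon$. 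Two small simplifications: the worry in your last paragraph about $\Wat$-continuity of $I_\epsilon$ is unnecessary, since the characterisation already gives relative compactness of $K'_\eta\cap A_\eta$ and hence compactness of its $\Wat$-closure without any further verification on the closure; and the final appeal to Prokhorov is superfluous, because exhibiting, for each $\eta$, a $\Wat$-compact set carrying mass at least $1-\eta$ uniformly in $n$ \emph{is} the definition of tightness. What your approach buys over the paper's bare citation is a self-contained argument that also makes transparent exactly where the specific form of $\Psi$ and the restriction to probability measures enter; what it costs is that you still lean on the Ethier--Kurtz compactness characterisation as a black box.
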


Note that the above statement is slightly different from the statement of \cite[Lemma 2.9]{Ethier1994}. It is straightforward to see that the two statements are equivalent for families of probability measures; for our purposes this lemma statement will be easier to use.

\section{The Wright-Fisher process}\label{appendix:Wright-Fisher process}

The Wright-Fisher process is defined as a solution of a martingale problem. There are various possible formulations of this martingale problem, which can be found in \cite[Section 3]{Ethier1993c}. The formulation we employ is given by \cite[(3.20) and (3.21)]{Ethier1993c}, and is defined as follows.
\begin{defin}[Wright-Fisher process]
A Wright-Fisher process on $\mathcal{P}(\K)$ of rate $\theta> 0$ with initial condition $\nu^0\in\calP(\K)$ is a continuous $\mathcal{P}(\K)$-valued process $(\nu_t)_{0\leq t<\infty}$ such that $\nu_0:=\nu^0$ and which is a solution of the following martingale problem.

We define for all $n\geq 2$ the maps
\[
\begin{split}
\Phi^{(n)}_{ij}:\calB_b(\K^n)\ra \calB_b(\K^{n-1})\\
f\mapsto (\Phi^{(n)}_{ij}f:(x_1,\ldots,x_{n-1})\mapsto f(x_1,\ldots,x_{j-1},x_i,x_j,\ldots,x_{n-1}).
\end{split}
\]
We further define for all $n\geq 1$ and $f\in \calB_b(\K^n)$ the map $\varphi_f\in \calB_b(\calP(\K))$ by
\begin{equation}\label{eq:function in domain of generator for WF process}
\varphi_f(\nu):=\nu^{\otimes n}(f).
\end{equation}
We then define the generator
\begin{equation}
    (\mathcal{L}\varphi_f)(\nu):=\theta\sum_{1\leq i<j\leq n}[\nu^{\otimes (n-1)}(\Phi^{(n)}_{ij}f)-\nu^{\otimes n}(f)],\quad \calD(\mathcal{L})=\{\varphi_f\quad\text{given by}\quad \eqref{eq:function in domain of generator for WF process}\}.
\label{eq:generator of MVWF diff}
\end{equation} 
The martingale problem defining the Wright-Fisher process of rate $\theta>0$ is then the condition that, for all $\varphi_f$ given by \eqref{eq:function in domain of generator for WF process},
\begin{equation}\label{eq:martingales in WF martingale problem}
\varphi_f(\nu_t)-\varphi_f(\nu_0)-\int_0^t(\mathcal{L}\varphi_f)(\nu_s)ds\quad\text{is a martingale.}
\end{equation}
\label{defin:MVWF}
\end{defin}

\begin{theo}[\cite{Ethier1993c}]\label{theo:Well-posedness of WF process}
We fix $\nu^0\in\calP(\K)$ and $\theta>0$. There exists a unique in law Wright-Fisher process on $\calP(\K)$ with initial condition $\nu_0=\nu^0$. Moreover the sample paths are continuous in the weak atomic metric,
\begin{equation}\label{eq:weak atomic continuous}
(\nu_t)_{0\leq t<\infty}\in C([0,\infty);\mathcal{P}_{\Wat}(\K))\subseteq C([0,\infty);\mathcal{P}(\K)),\quad\text{almost surely.}
\end{equation}
\end{theo}

Existence and uniqueness of the Wright-Fisher process in $C([0,\infty);\calP(\K))$ is given by \cite[Theorem 7.1]{Ethier1993c}. Continuity of sample paths in the weak atomic metric is given by \cite[Corollary 7.4]{Ethier1993c}.

We now provide a proof of Proposition \ref{prop:basic facts WF superprocess}.
\begin{proof}[Proof of Proposition \ref{prop:basic facts WF superprocess}]
The martingales in \eqref{eq:martingales in WF martingale problem} are continuous by \cite[Proposition 7.3]{Ethier1993c}.

We fix some choice of disjoint measurable sets $A_1,\ldots,A_n$ with $\dot{\cup}_{j=1}^nA_j=\K$. For arbitrary $1\leq i,j\leq n$ we take the test functions $\varphi_f$ given by \eqref{eq:function in domain of generator for WF process} with the choices of $f(x_1)=\Ind(x_1\in A_i)$ and $f(x_1,x_2)=\Ind(x_1\in A_i,x_2\in A_j)$. It follows that
\[
\nu_t(A_i)\quad\text{and}\quad \nu_t(A_i)\nu_t(A_j)-\theta\int_0^t[\nu_t(A_i)\Ind(i=j)-\nu_t(A_i)\nu_t(A_j)]dt
\]
are continuous martingales for all $1\leq i,j\leq n$. Proposition \ref{prop:basic facts WF superprocess} follows.
\end{proof}

\section{Brownian motion with hard killing at the boundary}\label{appendix:hard killing}

In this appendix, $(B_t)_{0\leq t<\tau_{\partial}}$ is Brownian motion in an open, connected, bounded domain $D$, killed instantaneously at the boundary. The Fleming-Viot particle system $(\vec{B}^N_t)_{t\geq 0}$ and Fleming-Viot multi-colour process $(\vec{B}^N_t,\vec{\eta}^N_t)_{t\geq 0}$ are driven by this Brownian motion with hard killing (as in Definition \ref{defin:hard killing Multi-Colour Process}). 

We have the following analogue of Theorem \ref{theo:convergence to QSD for reflected diffusion with soft killing}.
\begin{theo}\label{theo:hard killing convergence to QSD for reflected diffusion with soft killing}
There exists a unique quasi-stationary distribution (QSD) $\pi\in \mathcal{P}( D)$ for $(B_t)_{0\leq t<\tau_{\partial}}$. Moreover there exist constants $C<\infty$ and $k>0$ such that
\begin{equation}\label{eq:hard killing exponential convergence to QSD reflected diffusion}
\lvert\lvert \mathcal{L}_{\mu}(B_t\lvert \tau_{\partial}>t)-\pi\rvert\rvert_{\TV}\leq Ce^{-kt}\quad\text{for all}\quad\mu\in\mathcal{P}( D)\quad\text{and}\quad t\geq 0.
\end{equation}
Furthermore $\pi$ is a left eigenmeasure of $L$ with eigenvalue $-\lambda<0$,
\begin{equation}\label{eq:hard killing pi left eigenmeasure}
\langle \pi, Lf\rangle=-\lambda \langle \pi,f\rangle,\quad f\in\calD(L).
\end{equation}
Moreover $L$ has a positive right eigenfunction belonging to the domain of the Carre du Champs operator described in \eqref{eq:hard killing Carre d champs}, $\phi\in C_0(D;\Rm_{>0})\cap C^{\infty}(D)\cap \calD(\Gamma)$. This right eigenfunction is both the unique non-negative right eigenfunction and the unique right eigenfunction of eigenvalue $-\lambda$, up to rescaling. 
\end{theo}

We have the following analogue of Theorem \ref{theo:hydrodynamic limit for multicolour process}.

\begin{defin}\label{defin:hard killing limit for multi colour over fixed times}
We define a $ D\times\K$-valued killed Markov process, denoted by $((B_t,\eta_t))_{0\leq t<\tau_{\partial}}$, as follows. The process evolves in the first variable as a Brownian motion $B_t$ killed instantaneously upon contact with the boundary. The killing time $\tau_{\partial}$ is then given by $\tau_{\partial}:=\inf\{t>0:B_{t-}\in \partial D\}$. In the second variable $\eta_t$ is a constant element of $\K$ up to the killing time $\tau_{\partial}$, so that $\eta_t=\eta_0$ for all $0\leq t<\tau_{\partial}$. After the killing time the process is sent to a fixed cemetery state.
\end{defin}
We recall that the stopping time $\tau^N_{\epsilon}$ for $\epsilon>0$ was defined in \eqref{eq:stopping time tauNepsilon} by
\[
\{t>0:m^N_t(B(\partial D,\delta(\epsilon)))> \epsilon\},
\]
whereby $\delta=\delta(\epsilon)>0$ is given by Lemma \ref{lem:mass near boundary controls}.
\begin{theo}\label{theo:hard killing hydrodynamic limit for multicolour process}
We consider the Fleming-Viot multi-colour process $(\vec{B}^N_t,\vec{\eta}^N_t)_{t\geq 0}$ for $N\geq 2$. Then there exists constants $C_{\epsilon,T,N}$ for $\epsilon>0$, $0\leq T<\infty$ and $N\geq 2$ such that $C_{\epsilon,T,N}\ra 0$ as $N\ra \infty$, and such that for any initial condition $(\vec{B}^N_0,\vec{\eta}^N_0)$ for which $\tau^N_{\epsilon}>0$, and any $f\in \calB_b(\bar D\times \K;\Rm)$, we have that
\begin{align}\label{eq:hard killing hydrodnamic limit of FVMC in appendix}
\expE_{(\vec{B}^N_0,\vec{\eta}^N_0)}\Big[\sup_{t\leq T}\Big\lvert \Big(\frac{1}{N}\sum_{i=1}^N\delta_{(B^{N,i}_t,\eta^{N,i}_t)}-\Law_{\frac{1}{N}\sum_{i=1}^N\delta_{(B^{N,i}_0,\eta^{N,i}_0)}}((B_t,\eta_t))\Big)(f)\Big\rvert\Big]\leq C_{\epsilon,T,N}\lvert\lvert f\rvert\rvert_{\infty},\\
\expE_{(\vec{B}^N_0,\vec{\eta}^N_0)}\Big[\sup_{t\leq T}\Big\lvert J^N_t-\ln\Pm_{\frac{1}{N}\sum_{i=1}^N\delta_{(B^{N,i}_0,\eta^{N,i}_0)}}(\tau_{\partial}>t)\Big\rvert\wedge 1 \Big]\leq C_{\epsilon,T,N}.\label{eq:hard killing hydrodnamic limit of FVMC in appendix number of jumps}
\end{align}

\end{theo}

Finally, we have the following large-time limit for $((B_t,\eta_t))_{0\leq t<\tau_{\partial}}$, by the same proof as the proof of Proposition \ref{prop:convergence in time for killed Markov 1 in main theorem proof}.
\begin{prop}\label{prop:hard killing convergence in time for killed Markov 1 in main theorem proof}
For arbitrary sequences $(x^i,\eta^i)_{1\leq i\leq n}$ in $ D\times \K$ we consider the process \linebreak$(B_t,\eta_t)_{0\leq t<\tau_{\partial}}$ with initial distribution given by the empirical measure $\frac{1}{n}\sum_{i=1}^n\delta_{(x^i,\eta^i)}$. Then there exists $c_t\ra 0$ as $t\ra \infty$ such that, for all sequences $(x^i,\eta^i)_{1\leq i\leq n}$ in $\bar D\times \K$ and all $n\in \Nm$, we have
\begin{equation}\label{eq:hard killing convergence in time for killed Markov 1 in intro}
\begin{split}
\Big\lvert\Big\lvert\Law_{\frac{1}{n}\sum_{i=1}^n\delta_{(x^i,\eta^i)}}((B_t,\eta_t)\lvert \tau_{\partial}>t)-\frac{\sum_{i=1}^n\phi(x^i)\pi\otimes\delta_{\eta^i}}{\sum_{i=1}^n\phi(x^i)}\Big\rvert\Big\rvert_{\TV}\leq c_t,\quad 0\leq t<\infty.
\end{split}
\end{equation}
\end{prop}
\begin{proof}[Proof of Theorem \ref{theo:hard killing convergence to QSD for reflected diffusion with soft killing}, Theorem \ref{theo:hard killing hydrodynamic limit for multicolour process} and Proposition \ref{prop:hard killing convergence in time for killed Markov 1 in main theorem proof}]
It is easy to check that $(B_t)_{0\leq t<\tau_{\partial}}$ satisfies \cite[Assumption (A)]{Champagnat2014} - a proof in the H\"ormander setting is given by the present author in \cite[Theorem 7.4]{Tough2022a} (see the proof of \cite[Proposition 7.12]{Tough2022a}). The fact that $\phi$ belongs to the domain of $\Gamma$ follows from \cite[Theorem 1.1]{Xu2009}. Otherwise, the proofs of Theorem \ref{theo:hard killing convergence to QSD for reflected diffusion with soft killing}, Theorem \ref{theo:hard killing hydrodynamic limit for multicolour process} and Proposition \ref{prop:hard killing convergence in time for killed Markov 1 in main theorem proof} are identical to those of Theorem \ref{theo:convergence to QSD for reflected diffusion with soft killing}, Theorem \ref{theo:hydrodynamic limit for multicolour process} and Proposition \ref{prop:convergence in time for killed Markov 1 in main theorem proof} respectively.
\end{proof}

\end{appendix}

{\textbf{Acknowledgement:}}  
This work was partially funded by grant 200020 196999 from the Swiss National Foundation and by the EPSRC MathRad programme
grant EP/W026899/.

\bibliography{ScalingLimitFlemingViot}
\bibliographystyle{plain}
\end{document}